\renewcommand{\gets}{\leftarrow}
\renewcommand{\bC}{\mathbb{C}}
\renewcommand{\bR}{\mathbb{R}}
\newcommand{\muqr}{\nu_{\mathsf{IQR}}}
\newcommand{\croot}{c_{\mathsf{root}}}
\newcommand{\Croot}{C_{\mathsf{root}}}
\newcommand{\cd}{C_{\mathsf{D}}}
\newcommand{\false}{\texttt{false}}
\newcommand{\true}{\texttt{true}}
\newcommand{\deflate}{\mathsf{Deflate}}
\newcommand{\scale}{\Sigma}
\newcommand{\chR}{\check{\calR}}
\newcommand{\chr}{\check{r}}
\newcommand{\chs}{\check{s}}
\newcommand{\chq}{\check{q}}
\newcommand{\cc}{c}
\newcommand{\iqr}{\mathsf{IQR}}
\newcommand{\exc}{\mathsf{Exc}}
\newcommand{\find}{\mathsf{Find}}
\newcommand{\comptau}[1]{\mathsf{Tau}^{#1}}
\newcommand{\unif}{\mathrm{Unif}}
\newcommand{\disk}{D}
\newcommand{\tol}{\eta_1}
\newcommand{\pretol}{\eta_2}
\newcommand{\forward}{\beta}
\newcommand{\Rho}{\mathrm{P}}
\newcommand{\Spec}{\mathrm{Spec}}
\newcommand{\dist}{\mathrm{dist}}
\newcommand{\nn}{\ax{\tau^k}}
\newcommand{\exactqr}{\mathrm{iqr}}
\newcommand{\qr}{\mathrm{qr}}
\newcommand{\K}{B}
\newcommand{\shqr}{\mathsf{ShiftedQR}}
\newcommand{\w}{w}
\newcommand{\corner}[2]{#1_{(#2)}}
\newcommand{\E}{\mathbb{E}}
\renewcommand{\P}{\mathbb{P}}
\newcommand{\ax}[1]{\widetilde{#1}}
\newcommand{\pot}{\psi_k}
\renewcommand{\next}[1]{\widehat{#1}}
\newcommand{\fl}{\mathsf{fl}}
\newcommand{\mach}{\textbf{\textup{u}}}
\newcommand{\acc}{\delta}
\newcommand{\wacc}{\omega}
\newcommand{\gap}{\mathrm{gap}}
\renewcommand{\r}{\theta}
\newcommand{\regularize}{\mathsf{RitzOrDecouple}}
\newcommand{\decouple}{{\tt{dec}}}
\newcommand{\Sh}{\mathsf{Sh}}
\newcommand{\optimal}{\mathsf{Optimal}}
\newcommand{\optflag}{{\tt{opt}}}
\DeclareMathOperator{\Dist}{dist}
\newcommand{\cp}{\alpha}
\newcommand{\smalleig}{\mathsf{SmallEig}}
\newcommand{\giv}{\mathsf{giv}}
\newcommand{\rt}{\mathsf{root}}
\newcommand{\gapbound}{\Gamma}
\title{Global Convergence of  Hessenberg Shifted QR II: Numerical Stability}
\author{Jess Banks\thanks{\texttt{jess.m.banks@berkeley.edu}. Supported by NSF GRFP Grant DGE-1752814 and NSF Grant  CCF-2009011.}\\ UC Berkeley \and  Jorge Garza-Vargas\thanks{\texttt{jgarzavargas@berkeley.edu}. Supported by NSF Grant  CCF-2009011.}\\ UC Berkeley \and  Nikhil Srivastava\thanks{\texttt{nikhil@math.berkeley.edu}. Supported by NSF Grant  CCF-2009011.}  \\    UC Berkeley }
\date{\today}
\date{\today}
\begin{document}
\maketitle

\begin{abstract}
    We develop a framework for proving rapid convergence of shifted QR algorithms which use  Ritz values as shifts, in finite arithmetic. Our key contribution is a dichotomy result which addresses the known forward-instability issues surrounding the shifted QR iteration \cite{parlett1993forward}: we give a procedure which provably {\em either} computes a set of approximate Ritz values of a Hessenberg matrix with good forward stability properties, {\em or} leads to early decoupling of the matrix via a small number of QR steps.
    
    Using this framework, we show that the shifting strategy of \cite{banks2021global} converges rapidly in finite arithmetic with a polylogarithmic bound on the number of bits of precision required, when invoked on matrices of controlled eigenvector condition number and minimum eigenvalue gap. 
    
\end{abstract}

\tableofcontents

\section{Introduction}
\newcommand{\shkb}{\mathsf{Sh}_{k,B}}
In Part I of this series \cite{banks2021global} we gave a family of shifting strategies $\shkb$ for which the Hessenberg shifted QR algorithm converges globally and rapidly on nonsymmetric matrices whose eigenvector condition number is bounded, {\em in exact arithmetic}. In this sequel, we show that both the correctness and rapid convergence of these strategies continue to hold in finite (floating point) arithmetic with an appropriate implementation, and prove a bound on the number of bits of precision needed, for matrices with controlled eigenvector condition number and minimum eigenvalue gap. 

To do so, we develop some general tools enabling rigorous finite arithmetic analysis of the shifted QR iteration with any shifting strategy which uses Ritz values as shifts, of which $\shkb$ is a special case. We specifically address the following two issues. We assume familiarity with the shifted QR algorithm and standard background in numerical analysis; see \cite[Section 1]{banks2021global} and the references therein for more detail.
\begin{enumerate}

    \item {\em Forward Stability of QR Steps.} Consider a degree $k$ shifted QR step:
$$ p(H)=QR\qquad \next{H} = Q^*HQ,$$
where $p(z)=(z-r_1)\ldots(z-r_k)$ is a monic polynomial of degree $k$ and $H$ is an upper Hessenberg matrix.
It is well-known that such a step can be implemented in a way which is backward stable, in the sense that the finite arithmetic computation produces a matrix $\next{H}$ which is the unitary conjugation of a matrix near $H$ \cite{tisseur1996backward}. Backward stability is sufficient to prove correctness of the shifted QR algorithm in finite arithmetic, i.e., whenever it converges in a small number of iterations, the backward error is controlled.
However, it is insufficient for proving an upper bound on the number of iterations before decoupling,\footnote{As in \cite{banks2021global}, we call an (upper) Hessenberg matrix $H$ $\delta$-decoupled if $|H(i+1,i)|\le \delta\|H\|$ for some $i$.} which requires showing that certain subdiagonal entries of the Hessenberg iterates decay rapidly --- to reason about these entries, some form of forward stability is required. The issue is that a shifted QR step is {\em not} forward stable when $p(H)$ is nearly singular (which can occur before decoupling). Thus, the existing convergence proofs break down in finite arithmetic whenever this situation occurs. As far as we know, there is no complete and published proof of rapid convergence of the implicitly shifted QR algorithm with any shifting strategy in finite arithmetic, even on symmetric matrices (see Section \ref{sec:related} for a detailed discussion).

    \item {\em Computation of Approximate Ritz Values.} The Ritz values of order $k$ of an upper Hessenberg matrix $H$ are equal to the eigenvalues of its bottom right $k\times k$ corner $H_{(k)}$; they are also defined variationally as the zeros of the monic degree $k$ polynomial $p_k$ minimizing
    $ \|e_n^* p_k(H)\|$, where $e_n$ is an elementary basis vector.
    All of the higher order shifting strategies we are aware of are defined in terms of these Ritz values. However, we are not aware of any theoretical analysis of how to compute the Ritz values (approximately) in the case of nonsymmetric $H_{(k)}$, nor a theoretical treatment of which notion of approximation is appropriate for their use in the shifted QR iteration.\footnote{In practice, and in the current version of LAPACK, the prescription is to run the shifted QR algorithm itself on $H_{(k)}$, but there are no proven guarantees for this approach.}
\end{enumerate}

These two issues are closely related. A natural strategy for obtaining forward stability is to perturb the zeros $r_1,\ldots, r_k$ of the shift polynomial $p(z)$ so that they avoid the eigenvalues of $H$. Such a perturbation must be large enough to ensure forward stability, but small enough to preserve the convergence properties of the QR iteration, which are presumably tied to the $r_1,\ldots,r_k$ being approximate Ritz values. The precise notion of ``approximate'' thus determines how constrained we are in choosing our shifts while maintaining good convergence properties. 

\subsection{Results and Organization}
This paper contains the following two principal contributions, which together provide a solution to both (1) and (2) for a wide class of shifting strategies.

We use $\kappa_V(M)$ and $\gap(M)$ to refer to the eigenvector condition number and minimum eigenvalue gap of a matrix $M$, respectively. We will assume throughout that we are working with Hessenberg $H$ satisfying $\gap(H)>0$ and consequently $\kappa_V(H)<\infty$; these assumptions can be satisfied with good quantitative bounds at the cost of a small backward error by adding a random perturbation to $H$, as discussed in \cite[Remark 1.4]{banks2021global}. 
\paragraph{(i) Forward Stability by Regularization.} We handle the first issue above simply by replacing any given shifts $r_1,\ldots,r_k$ in a QR step by random perturbations $r_1+w_1,\ldots,r_k+w_k$ where the $w_i$ are independent random numbers of an appropriate size (which depends on $\kappa_V(H)$ and $\gap(H)$). We refer to this technique as {\em shift regularization} and show in Section \ref{sec:regularization} (Lemma \ref{lem:fixguarantee1}) that it yields forward stability of an implicit QR step with high probability, for any Hessenberg matrix $H$ with an upperbound on $\kappa_V(H)$ and a lowerbound on $\gap(H)$, and any shifts $r_1,\ldots,r_k$. 

The proof of forward stability requires us to establish stronger backward stability of implicit QR steps than was previously recorded in the literature; this appears in Sections \ref{sec:backwardiqr} and \ref{sec:forwardiqr} and may be of independent interest.

\paragraph{(ii) Optimal Ritz Values/Early Decoupling Dichotomy.} The second issue is more subtle. The notion of approximate Ritz values relevant for analyzing $\shkb$ is the following variational one. Recall from \cite[Definition 1.2]{banks2021global} that $\{r_1,\ldots,r_k\}\subset \bC$ is called a set of  {\em $\theta$-optimal Ritz values} of a Hessenberg matrix $H$ if:
\begin{equation}\label{eqn:optdef}
    \|e_n^* (H-r_1)\ldots(H-r_k)\|^{1/k}\le \theta \min_p \|e_n^* p(H)\|^{1/k},
\end{equation}
where the minimization is over monic polynomials of degree $k$. Thus, the true Ritz values are $1$-optimal.

It is not immediately clear how to efficiently compute a set of $\theta$-optimal Ritz values, so we reduce this task to the more standard one of computing forward-approximate Ritz values, which are just forward-approximations of the eigenvalues of $H_{(k)}$ with an appropriately chosen accuracy parameter $\beta$ roughly proportional to the right hand side of \eqref{eqn:optdef}. Our key result (Theorem \ref{lem:dichotomy}) is the following {\em dichotomy}: if a set of $\beta$-forward approximate Ritz values $r_1,\ldots,r_k$ of $H$ is {\em not} $\theta$-optimal, then one of the Ritz values $r_j$ must be close to an eigenvalue of $H$ and the corresponding right eigenvector of $H$ must have a large inner product with $e_n$. In the latter scenario we show that a single degree $k$ implicit QR step using the culprit Ritz value $r_j$ as a shift must lead to immediate decoupling, which we refer to as {\em early decoupling.}

Importantly, this dichotomy is compatible with the random regularizing perturbation used in (i), since the property of being a $\beta$-forward approximate Ritz value is preserved (with a slight increase in $\beta$) under small perturbations $r_i\rightarrow r_i+w_i$ when $|w_i|\ll \beta$. Thus, as long as we can compute $\beta$-forward approximations $r_1,\ldots,r_k$ of the eigenvalues of $H_{(k)}$, the combination of (i) and the dichotomy guarantees that with high probability, $r_1+w_1,\ldots,r_k+w_k$ are $\theta$-optimal Ritz values {\em and} the corresponding QR step is forward stable (which is exactly what is needed in order to analyze convergence of the iteration) --- {\em or} we achieve early decoupling.\\

\begin{example}[Necessity of Forward Error for Ritz Value Optimality] It is natural to ask whether the weaker property of being a $\beta$-backward approximation of the eigenvalues of $H_{(k)}$ is sufficient for producing $O(1)$-optimal Ritz values when the right hand side of \eqref{eqn:optdef} is of scale $\beta$. The following example shows that this is not in general the case: let $T$ be an $n\times n$ Hessenberg Toeplitz matrix with $1$s on the superdiagonal, $\delta$s on the subdiagonal, and $T(1,n)=1$.  Let the bottom right $k\times k$ corner of $T$ be $T_{(k)}$ and let $T_{(k)}'=T_{(k)}+\beta e_ke_1^*$. An explicit computation of characteristic polynomials shows that if $r_1,\ldots,r_k$ are the eigenvalues of $T_{(k)}$ and  $r_1'\ldots,r_k'$ are the eigenvalues of $T_{(k)}'$ (which are $\beta$-backward approximations of the $r_i$) then
$$\delta=\|e_n^*(T-r_1)\ldots (T-r_k)\|^{1/k}\ll \|e_n^*(T-r_1')\ldots(T-r_k')\|^{1/k}\approx \beta^{1/k},$$
unless $\beta=O(\delta^k)$. But this latter condition is enough to guarantee that the $r_1'\ldots,r_k'$ are $\delta$-forward approximations of the Ritz values of $T$, which is what we require. Since $T$ is close to normal when $\delta\ll 1/n$, this example also highlights that while we may have control of the nonnormality of $H$, this does not imply any control on the nonnormality of $H_{(k)}$ in general.
\end{example}
\noindent To produce a complete eigenvalue algorithm, we also need the following auxiliary ingredients. 

\paragraph{(iii) Approximating the Eigenvalues of Small Matrices.} In order to carry out (ii), we require an efficient way to obtain forward approximations to the eigenvalues of the small $k\times k$ matrix $H_{(k)}$. In addition, our degree $k$ shifting strategy cannot decouple matrices of size $k\times k$ or smaller, so we also need an algorithm to compute approximations to the eigenvalues of small matrices, to use once we have deflated to a sufficiently small matrix. We will assume access to a black box algorithm $\smalleig$ for use in these two situations, with the following guarantee on a matrix $M$ of dimension $k$ or smaller. (The notion of forward error here is absolute, instead of relative --- this will simplify some of the analysis later on.)

\begin{definition}
    \label{def:small-eig}
    A \textit{small eigenvalue solver} $\smalleig(M,\forward,\phi)$ takes as input a matrix $M$ of size at most $k\times k$, and with probability at least $1-\phi$, outputs $\ax\lambda_1,...,\ax\lambda_k \in \bC$ such that $|\ax\lambda_i - \lambda_i| \le \beta$ for each of $\lambda_1,...,\lambda_k \in \Spec M$.
\end{definition}

We were unable to find a suitably strong and precise worst-case running time bound in the literature for the forward error eigenproblem in the sense of Definition \ref{def:small-eig}.\footnote{One option is to combine the spectral bisection algorithm of \cite{banks2020pseudospectral} (which produces \textit{backward} approximate eigenvalues) with \cite[Theorem 39.1]{bhatia2007perturbation} (which shows that $\Omega(\beta^k)$ backward approximate eigenvalues are $O(\beta)$ forward approximate), but this uses roughly $O(k^4\log^4(k/\beta)\log(k))$ bits of precision, which is larger than we would like. Another possibility is to compute the characteristic polynomial and use polynomial root finders such as \cite{pan2002univariate}, but we could only find rigorous proofs about such algorithms in models of arithmetic other than floating point.} We will use the following result from Part III of this series \cite[Corollary 1.3]{banks2022III}, which may be of independent interest. Note that any other algorithm with provable guarantees may be used in its place.

\begin{theorem}
    \label{thm:smalleig}
    Given $M \in \bC^{k \times k}$ with $\|M\| \le \scale$, there is an algorithm, $\smalleig$, which solves the forward eigenvalue problem in the sense of Definition \ref{def:small-eig},
    using at most 
    $$
        O\big(k^5 \log (k\scale/\beta\phi)^2 + k^2\log(k\scale/\beta\phi)^2 \log (k \log(k\scale/\beta\phi))    \big)
    $$
    arithmetic operations on a floating point machine with $O(k^2 \log(k\scale/\beta \phi)^2)$ bits of precision. 
\end{theorem}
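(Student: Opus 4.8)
The statement is imported from \cite[Corollary 1.3]{banks2022III}; the plan of that argument is as follows. The first step is to reduce the \emph{forward} eigenvalue problem of Definition~\ref{def:small-eig} to the more standard \emph{backward} one, at the (unavoidable) cost of a factor $k$ inside a logarithm. By Ostrowski's theorem in the form of \cite[Theorem 39.1]{bhatia2007perturbation}, if $\widehat\lambda_1,\dots,\widehat\lambda_k$ are the eigenvalues of some $M+E$ with $\|E\|\le\epsilon\le 1$, then --- matched optimally --- they approximate $\Spec(M)$ to within $(4\scale)^{1-1/k}\epsilon^{1/k}$. Hence it suffices to produce $\epsilon$-backward approximate eigenvalues with
\[
\epsilon \;=\; \Big(\tfrac{\beta}{4\scale}\Big)^{k}\wedge 1 ,\qquad\text{so}\qquad \log(1/\epsilon)\;=\;O\!\big(k\log(k\scale/\beta\phi)\big).
\]

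Such backward approximate eigenvalues are then computed, with failure probability at most $\phi$, by a randomized spectral bisection procedure in the style of \cite{banks2020pseudospectral}: after a small random perturbation and random change of coordinates, the relevant pseudospectrum is shattered by a grid with probability $\ge 1-\phi$; one recursively splits $M$ along grid lines into diagonal blocks using approximate spectral projectors obtained from a matrix sign function iteration, reading the eigenvalues off once the blocks become trivial. A careful accounting of this recursion, which is the content of Part~III --- the $O(k^3)$ cost of each of $O(k)$ spectral splits, with the sign-function iteration and the shattering inside each split run to internal accuracy $\epsilon'\sim\mathrm{poly}(k)^{-1}\epsilon$ and contributing the logarithmic factors, together with the substitution $\log(1/\epsilon)=O(k\log(k\scale/\beta\phi))$ --- yields the leading term $O\!\big(k^5\log(k\scale/\beta\phi)^2\big)$ in the arithmetic count; the lower-order term $O\!\big(k^2\log(k\scale/\beta\phi)^2\log(k\log(k\scale/\beta\phi))\big)$ comes from the cleanup steps (reducing $M$ to Hessenberg form, estimating $\|M\|$, and refining the $k$ output numbers to the final accuracy). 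The genuinely new ingredient is a \emph{stability} analysis of the bisection primitive showing it runs in precision only $O(\log(k/\epsilon')^2)=O(k^2\log(k\scale/\beta\phi)^2)$ bits, improving on the $O(\log^4(k/\epsilon')\log k)=O(k^4\log^4(k\scale/\beta\phi)\log k)$-bit bound that a black-box combination of \cite{banks2020pseudospectral} with \cite[Theorem 39.1]{bhatia2007perturbation} would give, as noted in the footnote.

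The main obstacle is exactly this forward-versus-backward tension, which cannot be avoided on highly non-normal inputs: as the example above (following \eqref{eqn:optdef}) shows, an $O(\beta)$ backward perturbation of a matrix can move its eigenvalues by as much as $\Omega(\beta^{1/k})$, so one is forced to drive the backward accuracy all the way down to $\sim(\beta/\scale)^{k}$ and pay the corresponding factor $k$ in every logarithm. The delicate part is then to confine this cost and to verify that the randomized shattering step, the sign-function iteration, and the block deflations all compose stably using precision that scales only like the \emph{square} of the target accuracy's bit-length --- uniformly over all $M$ with $\|M\|\le\scale$, with no assumption on $\kappa_V(M)$ or $\gap(M)$ --- which is what produces the claimed $O(k^2\log^2)$ precision bound.
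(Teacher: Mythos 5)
The paper offers no proof of Theorem~\ref{thm:smalleig}; the statement is imported wholesale from \cite[Corollary 1.3]{banks2022III}, and the only internal discussion is the footnote preceding the theorem. Your sketch correctly identifies what is most likely a shared ingredient of any such argument: the reduction of forward to backward approximation via \cite[Theorem 39.1]{bhatia2007perturbation}, which forces backward accuracy $\epsilon \sim (\beta/\scale)^k$ and hence multiplies every logarithmic factor by $k$. Your computation of $\epsilon$ and of $\log(1/\epsilon)=O(k\log(k\scale/\beta\phi))$ is right.

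Where the proposal goes astray is in the choice of algorithmic primitive. You posit that Part~III runs the spectral bisection algorithm of \cite{banks2020pseudospectral} and that the ``genuinely new ingredient'' is a sharpened stability analysis showing the bisection primitive needs only $O(\log^2(k/\epsilon'))$ bits rather than $O(\log^4(k/\epsilon')\log k)$. But the footnote you are implicitly working from describes exactly that combination --- bisection plus the Ostrowski-type bound --- and rejects it precisely because it gives $O(k^4\log^4(k/\beta)\log k)$ bits, ``which is larger than we would like.'' The authors are not saying they tightened the analysis of that algorithm; they are signalling that they use a different one. Part~III in fact develops a method built around shifted inverse iteration (and the operation count in the theorem statement --- a leading term $k^5\log^2$ with a subordinate $k^2\log^2\log(k\log\cdot)$ term that looks like scalar Newton iterations, not like a matrix-sign recursion) is consistent with an iteration-based scheme on a $k\times k$ Hessenberg matrix, not with a divide-and-conquer sign-function recursion. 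Achieving $O(\log^2(1/\epsilon'))$ bits of precision for the Roberts/Newton sign-function iteration on arbitrary non-normal input, uniformly in $\kappa_V$, would be a substantial standalone improvement to \cite{banks2020pseudospectral} and is not something you can simply assert as ``the delicate part''; as written your sketch assumes away the portion of the argument that is actually new. The high-level obstacle you name is real, but the mechanism you supply to overcome it is not the one the cited work uses, and you have not given evidence that it works.
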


\noindent Note that the algorithm $\smalleig$ uses higher precision than we require anywhere else in this paper, but because it is called infrequently and on $k\times k$ matrices only, the total Boolean operations are still  subdominant (see Remark \ref{rem:provable}).

\paragraph{(iv) Deflation.} Once the shifting strategy $\shkb$ has been used to achieve decoupling, it is typical to \textit{deflate} the resulting matrix by zeroing out small subdiagonal elements. The outcome of this procedure is a block upper triangular matrix whose diagonal blocks are themselves upper Hessenberg, allowing one to recursively apply $\shkb$. Because our analysis of $\shkb$ relies on $\kappa_V(H)$ and $\gap(H)$ being controlled, it is critical that we can preserve these quantities when deflating and passing to a submatrix. This is handled in Section \ref{sec:preserve}.

Finally, we combine (i-iv) above in Section \ref{sec:fullalg} in order to give a fully proven shifted QR algorithm using the strategy $\shkb$.

\begin{theorem}\label{thm:main}
    Let $H$ be an $n\times n$ upper Hessenberg matrix and $\K \ge 2\kappa_V(H)$ and $\Gamma \le \gap(H)/2$ upper and lowerbounds on its eigenvector condition number and minimum eigenvalue gap. For a certain $k = O(\log\K \log\log\K)$ --- chosen as in \eqref{eqn:setk} --- the shifting strategy $\Sh_{k,\K}$ can be implemented in finite arithmetic to give a randomized shifted QR algorithm, $\shqr$, with the following guarantee: for any $\acc > 0$ $\shqr(H,\acc,\phi)$ produces the eigenvalues of a matrix $H'$ with $\|H - H'\| \le \acc\|H\|$, with probability at least $1 - \phi$, using
    \begin{itemize}
        \item $O\left(n^3
        \left(\log\frac{n\K}{\acc\Gamma}\cdot k\log k + k^2\right)\right)$ arithmetic operations on a floating point machine with $O\left(k\log \frac{n\K}{\acc\Gamma \phi}\right)$ bits of precision; and
        \item $O(n\log\frac{n\K}{\acc\Gamma})$ calls to $\smalleig$ with accuracy $\Omega(\frac{\acc^2\Gamma^2}{n^4\K^4\scale})$ and failure probability tolerance $\Omega\left(\frac{\phi}{n^2\log \frac{n\K}{\acc\Gamma}}\right)$
    \end{itemize}
\end{theorem}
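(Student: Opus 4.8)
The plan is to assemble ingredients (i)--(iv) into one recursive procedure and then run an error-accumulation and operation-counting argument. The algorithm $\shqr$ maintains an active upper Hessenberg block, and on a block of size $m > k$ it repeatedly applies degree-$k$ shifted QR steps with the strategy $\Sh_{k,\K}$, regularized as in (i), until the block is $\acc'$-decoupled for a suitable intermediate threshold $\acc' = \mathrm{poly}(\acc\Gamma/n\K)$; it then deflates (zeroing the small subdiagonal entry) and recurses on the two diagonal Hessenberg sub-blocks. The base case is a block of size at most $k$, on which the eigenvalues are returned by a single call to $\smalleig$. Since every deflation strictly decreases the sizes of the blocks being processed, there are only $O(n)$ block-processings, $O(n)$ decoupling events, and $O(n)$ base-case calls across the whole recursion tree.

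For a single active block $M$ of size $m\le n$, I would argue as follows. Part I \cite{banks2021global} shows that, in exact arithmetic, running $\Sh_{k,\K}$ with $\theta$-optimal Ritz values as shifts decouples $M$ within $O(\log(n\K/\acc\Gamma))$ steps, provided $\kappa_V(M)\le\K$, $\gap(M)\ge\Gamma$, and $k$ is chosen as in \eqref{eqn:setk}. Each step needs a set of order-$k$ Ritz values, produced in finite arithmetic by forming the bottom-right corner $M_{(k)}$ and calling $\smalleig(M_{(k)},\forward,\cdot)$ with $\forward$ of the order of the right-hand side of \eqref{eqn:optdef}, obtaining $\forward$-forward approximations $r_1,\dots,r_k$ of its eigenvalues. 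Theorem \ref{lem:dichotomy} then supplies the dichotomy: either $r_1,\dots,r_k$ are already $\theta$-optimal Ritz values of $M$, or some $r_j$ is close to an eigenvalue of $M$ whose right eigenvector has large inner product with $e_m$, in which case a single degree-$k$ implicit QR step with shift $r_j$ forces immediate (early) decoupling. In the first case we regularize by independent perturbations $r_i\to r_i+w_i$ with $|w_i|\ll\forward$: stability of the forward-approximate property preserves $\theta$-optimality, and Lemma \ref{lem:fixguarantee1} makes the resulting implicit QR step forward stable with high probability. Either branch is genuine progress, so $M$ is decoupled within the exact-arithmetic step count up to a constant factor, and in the early-decoupling branch only a single step was used.

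It remains to track errors and invariants across the recursion. Each forward-stable QR step produces an iterate that is the unitary conjugation of $M+E$ with $\|E\| \le \mathrm{poly}(m)\,\mach\,\|M\|$ (using the strengthened backward-stability bounds of Sections \ref{sec:backwardiqr}--\ref{sec:forwardiqr}), and the subdiagonal decay predicted by the exact analysis survives this additive perturbation. Summing the backward errors over all $O(n\cdot\mathrm{polylog})$ steps and over all recursion levels, together with the perturbation incurred at each deflation, shows by the triangle inequality that the total backward error is at most $\acc\|H\|$ once $\mach \le \mathrm{poly}(\acc\Gamma\phi/n\K)$, i.e. once the precision is $O\!\left(k\log\frac{n\K}{\acc\Gamma\phi}\right)$ bits. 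In parallel, Section \ref{sec:preserve} shows that deflating and passing to a diagonal sub-block of a matrix within this backward error of $H$ degrades $\kappa_V$ and $\gap$ by at most a controlled factor per level, so that with the precision chosen as above the cumulative degradation over all $O(n)$ levels stays bounded and the hypotheses $\kappa_V\le\K$, $\gap\ge\Gamma$ needed by Lemma \ref{lem:fixguarantee1} and Theorem \ref{lem:dichotomy} remain valid throughout. Finally, a routine accounting of the implicit QR sweeps, corner extractions, and regularizations over the recursion tree yields the stated $O\!\big(n^3(\log\tfrac{n\K}{\acc\Gamma}\cdot k\log k + k^2)\big)$ arithmetic operations, while the $O(n\log\tfrac{n\K}{\acc\Gamma})$ corner-eigenvalue and base-case solves account for the calls to $\smalleig$ with $\forward=\Omega(\acc^2\Gamma^2/n^4\K^4\scale)$ and per-call failure tolerance set so that a union bound over all of them together with the regularization steps keeps the total failure probability below $\phi$.

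The main obstacle is the last step: one must verify that the backward errors from the many QR steps and the invariant degradations from the many deflations compound only mildly --- it is precisely the worst-case compounding that pins down the required precision --- and one must carefully interface the exact-arithmetic convergence/potential argument of \cite{banks2021global} with iterates that are only forward stable and only approximately $\theta$-optimally shifted, ensuring in particular that an early-decoupling event resets nothing and counts as real progress.
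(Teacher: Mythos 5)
Your proof sketch takes essentially the same approach as the paper: the recursive block processing driven by the dichotomy of Theorem \ref{lem:dichotomy}, shift regularization via Lemma \ref{lem:fixguarantee1}, invariant preservation from Section \ref{sec:preserve}, and the error/operation accounting all mirror the paper's formal proof, carried out as Theorem \ref{thm:shqr-guarantees} together with the absolute-to-relative-error reduction. The only differences are minor bookkeeping: in the paper's $\regularize$ the random perturbation is applied \emph{before} the optimality test (so that $\theta$-optimality and forward stability are certified simultaneously), and the forward-accuracy parameter $\forward$ is fixed at roughly $\wacc^2/\scale$ throughout rather than tied to $\pot(H)$ per iteration.
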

\begin{remark}[Constants]
The constants on arithmetic operations and precision hidden in the asymptotic notation above are modest and can be read off by unpacking the expressions for $T_{\shqr}$ in equation \eqref{eq:t-shqr} and $\mach_{\shqr}$ in equation \eqref{eq:mach-requirement-shqr}, respectively.
\end{remark}

\begin{remark}[Computing Eigenvalues of an Arbitrary Matrix]\label{rem:provable}
    The algorithm $\shqr$ can be used to compute backward approximations of the eigenvalues of an arbitrary matrix $A\in \bC^{n\times n}$ with a backward error of $\delta\|A\|$ as follows:
    \begin{enumerate}
        \item Add a random complex Gaussian perturbation of norm $\delta\|A\|/2$ to the input matrix, which yields
    $\log (B/\Gamma) = O(\log (n/\delta))$ with high probability (see \cite[Remark 1.4]{banks2021global}) 
    \item Put the resulting matrix in Hessenberg form using Householder reflectors. This step is backward stable when performed in finite arithmetic \cite{tisseur1996backward}, and thus approximately preserves the bounds on $B,\Gamma$ by the results of Section \ref{sec:preserve}.
    \item Apply Theorem \ref{thm:main} with accuracy $\delta/2$, noting that the bound on $\log(B/\Gamma)$ from step $1$ implies that $k=O(\log(n/\delta)\log\log(n/\delta))$ is sufficient.  
    \end{enumerate}
    This yields a total worst-case complexity bound of $O(n^3\log^2(n/\delta)(\log\log(n/\delta))^2)$
    arithmetic operations with $O(\log^2(n/\delta)\log\log(n/\delta))$ bits of precision {\em plus} $O(n\log(n/\delta)\cdot \log^7(n/\delta)\log\log(n/\delta)^5)$ operations with $O(\log^4(n/\delta)(\log\log(n/\delta)^2))$ bits of precision for the calls to $\smalleig$. The Boolean cost of calls to $\smalleig$ is subdominant whenever $n\ge \log^{7/2}(n/\delta)(\log\log (n/\delta))^{2}$.
    \end{remark}
    
    While this asymptotic complexity guaranteed by Remark \ref{rem:provable} is significantly higher than the nearly matrix multiplication time spectral bisection algorithm of \cite{banks2020pseudospectral}, that algorithm uses  $O(\log^{4}(n/\delta)\log(n))$ bits of precision throughout the algorithm, moreover with a larger hidden constant. On the other hand, the algorithm of \cite{abbcs} uses $O(n^{10}/\delta^2)$ arithmetic operations but with only $O(\log(n/\delta))$ bits of precision (as is stated but not formally proven in \cite{abbcs}).

\begin{remark}[Hermitian Matrices]\label{rem:hermitian}
For the important case of Hermitian tridiagonal matrices there is no difficulty in maintaining $\kappa_V(H)=1$, so we may take $k=2$ and $B=1$. A minimum eigenvalue gap of $\Gamma\ge (\delta/n)^c$ may be guaranteed by adding a diagonal Gaussian perturbation of size $\delta/2$ \cite{aizenman2017matrix} to the matrix (or by adding a GUE perturbation and then tridiagonalizing the matrix). The Ritz values in this case can be computed exactly using the quadratic formula. The amount of precision required by Theorem \ref{thm:main} is consequently simply $O(\log(n/\delta))$ and the number of arithmetic operations used is $O(n^3+n^2\log(n/\delta))$, which is asymptotically the same as in the exact arithmetic analysis of tridiagonal QR with Wilkinson shift. 
\end{remark}

\subsection{Related Work}\label{sec:related}
The need for a finite arithmetic convergence analysis of shifted QR in the case of symmetric tridiagonal matrices was noted in the remarkable thesis of Sanderson \cite{sanderson1976proof}, who observed that it does not follow from the exact arithmetic analysis of Wilkinson \cite{wilkinson1968global}. Sanderson formally proved the convergence of the tridiagonal QR algorithm with explicit (as opposed to implicit) QR steps using Wilkinson shift under certain additional assumptions, one of which \cite[Section 4]{sanderson1976proof} is that the ``computation of the [Wilkinson shift] be done more accurately [i.e., in exact arithmetic]''. Sanderson left open the question of analyzing implicit shifted QR and gave an example for which its convergence  breaks down unless the machine precision is sufficiently small in relation to the subdiagonal entries of the matrix. These insightful observations of Sanderson are consistent with the approach taken in this paper, and Sanderson's question is resolved by Remark \ref{rem:hermitian}, albeit with a different shifting strategy.\\

\noindent {\em Forward Stability of Shifted QR.} An important step towards understanding and addressing the two issues mentioned at the beginning of the introduction was taken by Parlett and Le \cite{parlett1993forward}, who showed that for symmetric tridiagonal matrices, high sensitivity of the next QR iterate to the shift parameter (a form of forward instability) is always accompanied by ``premature deflation'', which is a phenomenon specific to ``bulge-chasing'' implementations of the implicit QR algorithm on tridiagonal matrices. Our dichotomy is distinct from but was inspired by their paper, and carries the same conceptual message: if the behavior of the algorithm is highly sensitive to the choice of shifts, then one must already be close to convergence in some sense. 

Watkins \cite{watkins1995forward} argued informally (but did not prove) that the implicit QR iteration should in many cases converge rapidly even in the presence of forward instability. This is an intriguing direction for further theoretical investigation, and could potentially lead to provable guarantees for the shifted QR algorithm with lower precision than required in this paper (see the discussion in Section \ref{sec:conclusion}). \\

\noindent {\em Aggressive Early Deflation.} The classical criterion for decoupling/deflation in shifted QR algorithms is the existence of small subdiagonal entries of $H$. The celebrated papers \cite{braman2002multishift,braman2002multishift2} introduced an additional criterion called aggressive early deflation which yields significant improvements in practice. Kressner \cite{kressner2008effect} showed that this criterion is equivalent to checking for converged Ritz values (i.e., Ritz pairs which are approximate eigenpairs of $H$), and ``locking and deflating them'' (i.e., deflating while preserving the Hessenberg structure of $H$) using Stewart's Krylov-Schur algorithm \cite{stewart2002krylov}.

The early decoupling procedure introduced in this paper is similar in spirit to aggressive early deflation --- in that it detects Ritz values which are close to eigenvalues of $H$ and enables decoupling even when the subdiagonal entries of $H$ are large --- but different in that it does not require the corresponding Ritz vector to have a small residual, and it ultimately produces classical decoupling in the sense of a small subdiagonal entry.\\

\noindent {\em Shift Blurring.} The shifting strategies considered in \cite{banks2021global} and in this paper use shift polynomials $p(z)=(z-r_1)\ldots(z-r_k)$ of degree $k$ where $k$ is roughly proportional to $\log\kappa_V(H)$.  It was initially proposed \cite{bai1989block} that such higher degree shifts should be implemented via ``large bulge chasing'', a procedure which computes the $QR$ decomposition of $p(H)$ in a single implicit QR step. This procedure was found to have poor numerical stability properties, which was referred to as ``shift blurring'' and explained by Watkins \cite{watkins1996transmission} and further by Kressner \cite{kressner2005use} by relating it to some ill-conditioned eigenvalue and pole placement problems. 

To avoid these issues, we implement all degree $k$ QR steps in this paper as a sequence of $k$ degree-$1$ ``small bulge'' QR steps. However, since our analysis requires establishing forward stability of each degree $k$ step, the amount of numerical precision required for provable $\delta-$decoupling increases as a function of $k$, roughly as $O(k\log(n/\delta))$ bits. This increase in precision is sufficient to avoid shift blurring. We suspect that forward stability of large bulge chasing can be established given a similar increase in precision, and leave this as a direction for further work.

\subsection{Discussion}
\label{sec:conclusion}
The purpose of this paper is to provide a framework for rigorous convergence analysis of shifted QR algorithms in finite arithmetic, and to show using this framework that the shifting strategy of \cite{banks2021global} enjoys reasonable complexity and precision bounds. The main resource we have tried to optimize is the amount of precision used, since this seems to be the main bottleneck in turning ``theoretical'' algorithms into practical ones. The parameters have been optimized to enable provably good worst case performance rather than ``real world'' performance; indeed, while our worst case bounds are quite good from a theoretical perspective, they are still far from the performance desired from software libraries. The specific implementation in this paper is accordingly {\em not} a prescription for an actual software implementation. Rather, it is best viewed as a framework for further experimentation and engineering, with the goal of eventually obtaining a practically competitive algorithm with a rigorous proof of correctness and worst case complexity. 

In the meantime, one way that the algorithm in this paper may be used profitably in practice is as a final exceptional shift in existing shifted QR implementations (which already have a rather long list of shifts to try in cases of slow convergence). This final exceptional shift will be invoked very rarely, thereby hardly affecting typical performance, but will nonetheless guarantee convergence in all cases at the cost of occasionally using higher precision and running time.

The main question which remains from a theoretical perspective is to reduce the bits of precision required from $O(\log^2(n/\delta)\log\log(n/\delta))$ to $O(\log(n/\delta))$ in the context of Remark \ref{rem:provable}, ideally with a small constant term, which would be asymptotically optimal and in line with the standard notion of ``numerical stability'' in numerical analysis. The main bottleneck to doing this is that our current proof requires establishing forward stability of a sequence of $k=O(\log(n/\delta)\log\log(n/\delta))$ degree $1$ QR steps, which necessitates $O(k\log(n/\delta))=O(\log^2(n/\delta)\log\log(n/\delta))$ bits of precision. Whether this can be avoided is an an interesting conceptual question. A second, related, question is to reduce the number of bits of precision required for computing optimal Ritz values.
\section{Preliminaries}\label{sec:finite}
\label{sec:qrpreliminaries}
All vector norms are $\ell_2$, and all matrix norms are the induced $\ell_2$ operator norm, unless otherwise specified. We denote the distance between two sets $\calR,\calS \subset\bC$ as
$$
    \dist(\calR,\calS) := \inf_{r \in \calR,\, s \in \calS}|r - s|.
$$

\paragraph{Finite Precision Arithmetic.}
We use the standard floating point axioms from \cite[Chapter 2]{higham2002accuracy} (ignoring overflow and underflow as is customary), and use $\mach$ to denote the unit roundoff. Specifically, we will assume that we can add, subtract, multiply, and divide floating point numbers, and take square roots of positive floating point numbers, with relative error $\mach$.

Our implementation of implicit QR steps is based on \textit{Givens rotations}. If $x \in \R^2$, write $\giv(x)$ for the $2\times 2$ Givens rotation mapping $\giv(x) : x \mapsto \|x\|e_1$. It is routine \cite[Lemmas 19.7-19.8, e.g.]{higham2002accuracy} that, assuming $\mach \le 1/24$, one can compute the norm of $x$ with relative error $2\mach$ and apply $\giv(x)$ to a vector $y \in \R^2$ in floating point so that
$$
    \left|(\ax{\giv(x) y})_i - (\giv(x) y)_i\right| \le \|y\|\frac{6\mach}{1 - 6\mach} \le \|y\| \cdot 8\mach \qquad i=1,2.
$$

For some tasks, our algorithm and many of its subroutines need to set certain scalar parameters  in order to know when to halt, at what scale to perform certain operations and how many iterations to perform. In this context, sometimes the algorithm will have to compute $k$-th roots for moderate values of $k$ --- even though these operations are not directly used on the matrices in question. We will assume that the following elementary functions can be computed accurately and relatively quickly. 

\begin{lemma}[$k$th Roots]
\label{lem:elementaryfunctions}
There exist small universal constants $\Croot, \croot \geq 1$, such that whenever $k \croot  \mach \leq \epsilon \leq 1/2 $ and for any $a\in \bR^+$, there exists an algorithm that computes $a^{1/k}$ with relative error $\epsilon$ in at most $$T_{\rt}(k,\epsilon):= \Croot k \log(k\log(1/\epsilon))$$ arithmetic operations.
\end{lemma}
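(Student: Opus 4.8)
The plan is to reduce the problem to extracting a $k$th root of a number of controlled magnitude, then localize that root by a short bisection before finishing with Newton's method, whose quadratic convergence is what produces the $\log\log(1/\epsilon)$ factor in the bound. First I would normalize: writing the floating point input $a$ as $m\cdot 2^{p}$ with $m\in[1,2)$ and $p$ an integer of bounded magnitude (recall that we ignore overflow and underflow), and putting $q=\lfloor p/k\rfloor$, $t=p-qk\in\{0,\dots,k-1\}$, and $N:=m\cdot 2^{t}\in[1,2^{k})$, one has $a^{1/k}=2^{q}\,N^{1/k}$. These manipulations are exact and cost $O(\log k)$ operations (even if native access to the exponent is unavailable, the exponent range being a fixed machine constant), so it suffices to compute $v:=N^{1/k}\in[1,2)$ to relative error $\epsilon$.

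Next, a bisection phase. Starting from $[\ell,u]=[1,2]\ni v$, I would perform $p_0:=\lceil\log_2(8k)\rceil$ steps, at each step comparing the floating point value of $\big(\tfrac{\ell+u}{2}\big)^{k}$ --- evaluated with $O(k)$ multiplications, hence with relative error $O(k\mach)$ --- against the exactly represented $N$ to decide which half to keep. A comparison can only be incorrect when the midpoint lies within relative distance $O(\mach)$ of $v$, and then \emph{both} halves are within $O(\mach)$ of $v$, so the invariant $\dist(v,[\ell,u])=O(\mach)$ is never lost; since each step also halves the width, after $p_0=O(\log k)$ steps the midpoint $x_0$ of the final interval satisfies $|x_0/v-1|\le 2^{-p_0}+O(\mach)\le 1/(4k)$, where the second inequality uses that $\croot$ is a large enough universal constant. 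This phase costs $O(k\log k)$ operations.

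Then a Newton phase: iterate $x_{n+1}=\tfrac1k\big((k-1)x_{n}+N\,x_{n}^{1-k}\big)$ from $x_0$, evaluating $x_n^{k-1}$ with $O(k)$ multiplications so that each step costs $O(k)$ operations. Writing $x_n=v(1+\eta_n)$, two applications of Bernoulli's inequality give $0\le\eta_{n+1}\le k\,\eta_n^{2}$, hence $k\eta_n\le(k\eta_0)^{2^{n}}\le(1/4)^{2^{n}}$ and $\eta_n\le\epsilon$ after $q_0=O(\log\log(1/\epsilon))$ iterations (indeed $q_0=0$ suffices when $k\epsilon\ge1$, since then $\eta_0\le1/(4k)\le\epsilon$). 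In floating point each step perturbs the iterate by a relative $O(k\mach)$, and since $v$ is a superattracting fixed point of the Newton map the standard self-correction argument shows the computed iterates stall at relative error $O(k\mach)$ rather than accumulating error across iterations; as $O(k\mach)\le O(\epsilon/\croot)\le\epsilon$ for $\croot$ large, the same $q_0$ iterations still produce a relative-$\epsilon$ approximation of $v$, and hence of $a^{1/k}=2^{q}v$. Summing the three phases, the total is $O(\log k)+O(k\log k)+O(k\log\log(1/\epsilon))=O\big(k(\log k+\log\log(1/\epsilon))\big)=O\big(k\log(k\log(1/\epsilon))\big)$, which is $T_{\rt}(k,\epsilon)$ for a suitable universal constant $\Croot$.

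The step I expect to be the main obstacle is the finite-precision bookkeeping, because only the single hypothesis $k\croot\mach\le\epsilon$ is available: one must verify, with explicit constants, both that the (fuzzy) bisection reaches relative accuracy at most $1/(4k)$ --- so that $x_0$ lands strictly inside the quadratic-convergence basin of Newton's method, which for this iteration has radius $\Theta(1/k)$ --- and that the Newton iteration in floating point stalls at $O(k\mach)$ and not at some larger multiple of $\epsilon$. Choosing $\croot$ to be a sufficiently large universal constant is exactly what reconciles both requirements with the hypothesis; the reduction of $a$ to $N\in[1,2^{k})$ within the stated arithmetic model is a more routine matter.
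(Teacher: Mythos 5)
Your proposal is correct and follows exactly the approach indicated by the paper's one-line proof sketch ("Use Newton's method, with starting point found via bisection"); you have simply filled in the details of normalization, the fuzzy bisection to reach the $\Theta(1/k)$-radius quadratic-convergence basin, and the error-stalling argument for the floating-point Newton iteration. The operation count $O(k\log k) + O(k\log\log(1/\epsilon)) = O(k\log(k\log(1/\epsilon)))$ matches $T_{\rt}$ as stated.
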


\begin{proof}[Sketch]
Use Newton's method, with starting point found via bisection.
\end{proof}

\paragraph{Random Sampling Assumptions.} As discussed above, we will repeatedly regularize our shifts by replacing each with uniformly random point on a small surrounding disk of radius $O(\acc^2)$, where $\acc$ is the accuracy. To simplify the presentation, we will assume that these perturbations can be executed in \textit{exact} arithmetic. Importantly, this assumption's only impact is on the failure probability of the algorithm, and its effect is quite mild. We will see below that the algorithm fails when one of our randomly perturbed shifts happens to land too close to an eigenvalue, and we bound the failure probability by computing the area of the `bad' subset of the disk where this occurs. If the random perturbation was instead executed in finite arithmetic, the probability of landing in the bad set differs from this estimate by $O(\mach/\acc^2)$. Since we will set $\mach = o(\acc^2)$, this discrepancy can reasonably be neglected.

\begin{definition}[Efficient Perturbation Algorithm]
    An efficient random perturbation algorithm takes as input $r \in \bC$ an $R>0$, and generates a random $w\in \bC$ distributed uniformly in the disk $\mathbb{D}(r, R)$ using $\cd$ arithmetic operations. 
\end{definition}

\subsection{Key Definitions and Lemmas from \cite{banks2021global}}

For a Hessenberg matrix $(h_{ij})_{i, j=1}^n =H \in \bC^{n\times n}$ we define the \emph{potential} of $H$ as
$$\pot(H):=|h_{n-k, n-k-1}\cdots h_{n, n-1}|^{1/k}, $$
and we will use this quantity to track the convergence of the QR iteration. We will also use $\chi_k(z)$ to denote the characteristic polynomial of $\corner{H}{k}$, the lower-right $k\times k$ corner of $H$, and as mentioned in the introduction, we will exploit that the Ritz values of $H$ are the roots of $\chi_k$ and that the following variational characterization exists (see \cite[Lemma 2.2]{banks2021global} for a proof). 

\begin{lemma}[Variational Formula for $\pot$]
    \label{lem:minnorm}
    Let $H\in \bC^{n\times n}$ be any Hessenberg matrix. Then, for any $k$ 
    $$
        \psi_k(H) = \min_{p\in \calP_k}\|e_n^* p(H)\|^{1/k},
    $$
    with the minimum attained for $p= \chi_{k}$. 
\end{lemma}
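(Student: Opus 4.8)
The plan is to reduce everything to the bottom-right $k\times k$ corner $\corner{H}{k}$ and invoke the Cayley--Hamilton theorem. Write $H=\begin{pmatrix} A & B\\ C & D\end{pmatrix}$ with $D=\corner{H}{k}$ of size $k\times k$ and $C$ of size $(n-k)\times k$; since $H$ is Hessenberg, $C$ is rank one with its only nonzero entry $h_{n-k+1,n-k}$ in the top-right position, and relative to this $(n-k)+k$ splitting the row $e_n^*$ equals $(\,0 \mid e_k^*\,)$. I would first prove by induction on $j$ that $e_n^* H^j=(\,0\mid e_k^* D^j\,)$ for $0\le j\le k-1$, and that $e_n^* H^k=(\,e_k^* D^{k-1}C\mid e_k^* D^k\,)$. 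The inductive step uses that $D$ is itself Hessenberg, so $e_k^* D^j$ is supported on the last $j+1$ coordinates of $\bC^k$; as long as $j\le k-2$ this support misses the first coordinate, which is exactly where the block $C$ is nonzero, so $(e_k^* D^j)C=0$ and the recursion continues cleanly. At $j=k-1$ the first coordinate of $e_k^* D^{k-1}$ equals the product of the $k-1$ subdiagonal entries of $D$, and the block $C$ contributes the one remaining factor $h_{n-k+1,n-k}$; hence the $(n-k)$-th coordinate of $e_n^* H^k$ is, up to sign, exactly $\psi_k(H)^k$, while every other nonzero coordinate of $e_n^* H^k$ lies in the bottom block.

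Given this ``staircase'' structure, both halves of the lemma follow quickly. For the lower bound, any monic $p\in\calP_k$ can be written $p(z)=z^k-r(z)$ with $\deg r\le k-1$; by the induction above $e_n^* r(H)$ is supported entirely in the bottom block and hence cannot affect coordinate $n-k$. Thus the $(n-k)$-th coordinate of $e_n^* p(H)$ equals that of $e_n^* H^k$, whose modulus is $\psi_k(H)^k$ independently of $p$, giving $\|e_n^* p(H)\|\ge\psi_k(H)^k$ and therefore $\min_{p\in\calP_k}\|e_n^* p(H)\|^{1/k}\ge\psi_k(H)$. For the matching upper bound, expand $e_n^*\chi_k(H)$ using the structure formulas: its bottom-block part is precisely $e_k^*\chi_k(D)$, which vanishes by Cayley--Hamilton since $\chi_k$ is the characteristic polynomial of $D=\corner{H}{k}$. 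Hence $e_n^*\chi_k(H)$ is supported on the single coordinate $n-k$, so $\|e_n^*\chi_k(H)\|=\psi_k(H)^k$, attaining the lower bound.

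Two degenerate cases are worth noting but cause no difficulty: if $k\ge n$ then $\corner{H}{k}=H$ and both sides vanish by Cayley--Hamilton applied to $H$ itself, and if some subdiagonal entry involved in $\psi_k(H)$ is zero then again both sides are $0$. The only step requiring genuine care is the bookkeeping in the induction of the first paragraph: checking that right-multiplying $e_n^* H^j$ by $H$ keeps it inside the bottom $k\times k$ block until step $k$, and then correctly identifying the single coordinate that ``escapes'' through the rank-one block $C$. Once that structural fact is in hand, the lower bound and the identification of the minimizer as $\chi_k$ via Cayley--Hamilton are essentially immediate.
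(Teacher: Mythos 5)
Your proof is correct and is essentially the same block-decomposition plus Cayley--Hamilton argument used for \cite[Lemma 2.2]{banks2021global}: track how $e_n^\ast H^j$ stays confined to the bottom $k$ coordinates for $j<k$, identify the unique entry that ``escapes'' at step $k$ as the subdiagonal product, and kill the remaining entries by Cayley--Hamilton applied to $\corner{H}{k}$. (One cosmetic slip: the bottom-left block $C$ has dimensions $k\times(n-k)$, not $(n-k)\times k$; this does not affect the argument.)
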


Our analysis will heavily rely on the notion of {\em approximate functional calculus} introduced in \cite{banks2021global}, which for convenience of the reader we recall here. First, consider a diagonalizable Hessenberg matrix $H\in \bC^{n\times n}$, with  diagonalization $H=VDV^{-1}$ for $V$ chosen\footnote{If there are multiple such $V$, choose one arbitrarily.} to satisfy $\|V\|= \|V^{-1}\| = \sqrt{\kappa_V(H)}$ and let $\lambda_i = D_{ii}$ be the $i$-th eigenvalue of $H$. We will use $Z_H$ to denote the random variable supported on $\Spec(H)$ with distribution
\begin{equation}\label{eqn:specmeasure}\P[Z_H = \lambda_i ] = \frac{|e_n^* V e_i|^2}{\|e_n^* V\|^2}.\end{equation}
We will often use the following inequalities (see \cite[Lemma 2.4]{banks2021global} for a proof). 

\begin{lemma}[Approximate Functional Calculus]
    \label{lem:spectral-measure-apx}
    For any upper Hessenberg $H$ and complex function $f$ whose domain includes the eigenvalues of $H$,
    $$
        \frac{\|e_n^\ast f(H)\|}{\kappa_V(H)} \le \E\left[|f(Z_H)|^2\right]^{\frac{1}{2}} \le \kappa_V(H)\|e_n^\ast f(H)\|.
    $$
\end{lemma}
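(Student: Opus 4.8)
The plan is to compute both sides directly from the fixed diagonalization $H = VDV^{-1}$ (which exists since the standing assumption $\gap(H)>0$ forces distinct eigenvalues) with $\|V\| = \|V^{-1}\| = \sqrt{\kappa_V(H)}$, and then compare them. Write $f(D) = \mathrm{diag}(f(\lambda_1),\dots,f(\lambda_n))$ and $f(H) = Vf(D)V^{-1}$. The first step is to unwind the definition \eqref{eqn:specmeasure} of $Z_H$: since the row vector $e_n^* V f(D)$ has $i$th coordinate $(e_n^* V e_i) f(\lambda_i)$,
$$
\E\big[|f(Z_H)|^2\big] = \sum_{i=1}^n |f(\lambda_i)|^2\,\frac{|e_n^* V e_i|^2}{\|e_n^* V\|^2} = \frac{\|e_n^* V f(D)\|^2}{\|e_n^* V\|^2},
$$
so that $\E[|f(Z_H)|^2]^{1/2} = \|e_n^* V f(D)\| / \|e_n^* V\|$.

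Next I would relate $\|e_n^* f(H)\|$ to $\|e_n^* V f(D)\|$. From the identities $e_n^* f(H) = (e_n^* V f(D)) V^{-1}$ and $e_n^* V f(D) = (e_n^* f(H)) V$, submultiplicativity of the operator norm together with $\|V\| = \|V^{-1}\| = \sqrt{\kappa_V(H)}$ gives
$$
\frac{\|e_n^* V f(D)\|}{\sqrt{\kappa_V(H)}} \le \|e_n^* f(H)\| \le \sqrt{\kappa_V(H)}\,\|e_n^* V f(D)\|.
$$
It remains to control the normalizing scalar $\|e_n^* V\|$. On one side $\|e_n^* V\| \le \|V\| = \sqrt{\kappa_V(H)}$; on the other, $1 = \|e_n^*\| = \|e_n^* V V^{-1}\| \le \|e_n^* V\|\,\|V^{-1}\|$ yields $\|e_n^* V\| \ge 1/\sqrt{\kappa_V(H)}$ (equivalently, both bounds drop out of the displays above by taking $f \equiv 1$). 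Dividing the previous display through by $\|e_n^* V\|$ and inserting these two estimates gives, for the lower bound,
$$
\E[|f(Z_H)|^2]^{1/2} = \frac{\|e_n^* V f(D)\|}{\|e_n^* V\|} \ge \frac{\|e_n^* f(H)\|/\sqrt{\kappa_V(H)}}{\sqrt{\kappa_V(H)}} = \frac{\|e_n^* f(H)\|}{\kappa_V(H)},
$$
and symmetrically, using $\|e_n^* V f(D)\| \le \sqrt{\kappa_V(H)}\,\|e_n^* f(H)\|$ and $\|e_n^* V\| \ge 1/\sqrt{\kappa_V(H)}$, the upper bound $\E[|f(Z_H)|^2]^{1/2} \le \kappa_V(H)\|e_n^* f(H)\|$.

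This argument is entirely routine and I do not expect any real obstacle. The only points requiring a little care are (a) that $H$ is only ever used in the diagonalizable regime guaranteed here, so $f(H)$ is unambiguous; and (b) bookkeeping the factor $\|e_n^* V\|$ so that the final distortion is exactly $\kappa_V(H)$ rather than a larger power of it.
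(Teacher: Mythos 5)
Your proof is correct, and it follows the direct path: diagonalize $H=VDV^{-1}$ with the normalization $\|V\|=\|V^{-1}\|=\sqrt{\kappa_V(H)}$, compute $\E[|f(Z_H)|^2]^{1/2}=\|e_n^*Vf(D)\|/\|e_n^*V\|$ from \eqref{eqn:specmeasure}, and then pass between $\|e_n^*Vf(D)\|$ and $\|e_n^*f(H)\|$ and control $\|e_n^*V\|$ by submultiplicativity. This is essentially the same argument as the one the paper defers to in Part I (\cite[Lemma 2.4]{banks2021global}).
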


As in \cite{banks2021global}, if $\calR = \{r_1,...r_k\} \subset \bC$ and $p(z) = (z-r_1)\cdots(z-r_k)$, we will call $r \in \calR$ is \textit{$\cp$-promising} if
$$
    \E\frac{1}{|Z_H - r|^k} \ge \frac{1}{\cp^k}\E \frac{1}{|p(Z_H)|}.
$$
This notion will always be applied to a set $\calR$ of $\r$-optimal Ritz values in the sense of \eqref{eqn:optdef}. A key observation from \cite{banks2021global}, which we will recycle here, is that if a shifted QR step using an $\cp$-promising, $\r$-optimal Ritz value $r$ does not make progress, then $Z_H$ has support on a disk of radius $\approx \cp \pot(H)$ about $r$ --- which in particular means that there is a nearby eigenvalue whose left eigenvector is aligned with $e_n^\ast$.

\begin{lemma}[Stagnation Implies Support]
    \label{lem:main}
    Let $\gamma,\theta\in (0, 1)$ and let $\calR$ be a set of $k$ $\r$-approximate Ritz values of $H$. Suppose $r \in \calR$ is $\cp$-promising and assume 
    \begin{equation}
        \label{eqn:jstag} 
        \psi_k\left(\exactqr(H,(z - r)^k)\right)\geq (1-\gamma)\psi_k(H)>0.
    \end{equation}
    Then, for every $t\in (0,1)$,
    \begin{equation}
        \nonumber
        \P \left[ |Z_H-r|\le (1+\r)\cp\left(\frac{\kappa_V(H)}{t}\right)^{1/k}\pot(H)\right]
        \ge (1-t)^2\frac{(1-\gamma)^{2k}}{\cp^{2k}(1+\r)^{2k}\kappa_V(H)^{4}}.
    \end{equation}
\end{lemma}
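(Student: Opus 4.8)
The plan is to apply the Paley--Zygmund inequality to the nonnegative random variable $X := |Z_H - r|^{-k}$. From the stagnation hypothesis I will extract an \emph{upper} bound $\E[X^2] = \E|Z_H-r|^{-2k}\le U$, and from the hypotheses that $r$ is $\alpha$-promising and that $\calR$ is a set of approximate Ritz values a \emph{lower} bound $\E[X]\ge L$. Paley--Zygmund then gives, for every $t\in(0,1)$, $\P[X > t\E X]\ge (1-t)^2(\E X)^2/\E[X^2]\ge (1-t)^2 L^2/U$, and $\{X > t\E X\}$ is exactly the ball $\{|Z_H - r| < (t\E X)^{-1/k}\}\subseteq\{|Z_H-r|\le (tL)^{-1/k}\}$, which I will check has radius $(tL)^{-1/k} = (1+\theta)\alpha(\kappa_V(H)/t)^{1/k}\psi_k(H)$ --- exactly the radius in the statement. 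Throughout one may assume $r\notin\Spec(H)$: this holds in all applications since the shifts are regularized (and if $r\in\Spec(H)$ then $\exactqr(H,(z-r)^k)$ decouples, forcing $\psi_k(\exactqr(H,(z-r)^k))=0$ and contradicting the stagnation hypothesis $\psi_k(H)>0$).

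The bound on $\E[X^2]$ is the load-bearing step. Write $g(z) = (z-r)^k$ and let $g(H)=QR$ be the QR factorization used by the step, so $\hat H := \exactqr(H,g) = Q^*HQ$ and hence $g(\hat H) = Q^*g(H)Q = RQ$. Since $R$ is upper triangular its bottom row equals $R_{nn}e_n^*$, and $\|e_n^*Q\| = 1$ because $Q$ is unitary, so $\|e_n^*g(\hat H)\| = |R_{nn}|$; Lemma~\ref{lem:minnorm} with the monic test polynomial $g$ then gives $\psi_k(\hat H)^k\le\|e_n^*g(\hat H)\| = |R_{nn}|$. Next, taking adjoints in $g(H)=QR$ and right-multiplying by $Q$ gives $g(H)^*Q = R^*$, whose last column is $\overline{R_{nn}}\,e_n$; since $Qe_n$ has unit norm this forces $1 = |R_{nn}|\,\|e_n^*g(H)^{-1}\|$, i.e.\ $|R_{nn}| = \|e_n^*(H-r)^{-k}\|^{-1}$. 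Combining these with the stagnation bound $\psi_k(\hat H)^k\ge(1-\gamma)^k\psi_k(H)^k$ yields $\|e_n^*(H-r)^{-k}\|\le 1/\big((1-\gamma)^k\psi_k(H)^k\big)$, and the upper-bound direction of the approximate functional calculus (Lemma~\ref{lem:spectral-measure-apx}) applied to $f(z)=(z-r)^{-k}$ gives $\E|Z_H-r|^{-2k}\le \kappa_V(H)^2/\big((1-\gamma)^{2k}\psi_k(H)^{2k}\big) =: U$.

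For the lower bound on $\E[X]$, let $p(z) = (z-r_1)\cdots(z-r_k)$ be the shift polynomial of $\calR$, so the Ritz-approximation hypothesis reads $\|e_n^*p(H)\|^{1/k}\le(1+\theta)\psi_k(H)$. Two applications of Jensen's inequality ($\E[1/Y]\ge 1/\E Y$ and $\E Y\le(\E Y^2)^{1/2}$ with $Y = |p(Z_H)|>0$), followed by Lemma~\ref{lem:spectral-measure-apx} applied to $p$, give $\E\,|p(Z_H)|^{-1}\ge\big(\kappa_V(H)\,\|e_n^*p(H)\|\big)^{-1}\ge\big(\kappa_V(H)(1+\theta)^k\psi_k(H)^k\big)^{-1}$; the definition of $\alpha$-promising then gives $\E[X] = \E\,|Z_H-r|^{-k}\ge\alpha^{-k}\,\E\,|p(Z_H)|^{-1}\ge\big(\alpha^k(1+\theta)^k\kappa_V(H)\psi_k(H)^k\big)^{-1} =: L$.

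It remains to assemble the pieces. With $L$ and $U$ as above, $L^2/U = (1-\gamma)^{2k}/\big(\alpha^{2k}(1+\theta)^{2k}\kappa_V(H)^4\big)$, which is the asserted probability; and $(tL)^{-1/k} = (1+\theta)\alpha(\kappa_V(H)/t)^{1/k}\psi_k(H)$, which is the asserted radius, so the Paley--Zygmund event $\{X>t\E X\}$ is contained in the event whose probability the lemma lower-bounds. I expect the only genuinely delicate point to be the matrix identity $\psi_k(\hat H)^k\le|R_{nn}| = \|e_n^*(H-r)^{-k}\|^{-1}$: tracking the last row and column of the unitary factor of $(H-r)^k$ is where the exact-arithmetic assumption on this step enters and is the only place the structure of the QR step is used; everything else is Jensen, functional calculus and Paley--Zygmund bookkeeping (one must, however, be careful to invoke the correct --- upper-bound --- direction of Lemma~\ref{lem:spectral-measure-apx} in both applications).
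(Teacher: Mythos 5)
Your proof is correct and follows essentially the same route the paper ascribes to \cite[Lemma 2.8]{banks2021global}: you translate stagnation into the resolvent bound $\|e_n^*(H-r)^{-k}\| \le ((1-\gamma)\psi_k(H))^{-k}$ via the $R_{nn}$ identity (the corollary to Lemma~\ref{lem:exact-iqr-composition}) together with Lemma~\ref{lem:minnorm}, obtain the requisite second--moment upper bound and first--moment lower bound on $|Z_H-r|^{-k}$ from the approximate functional calculus (Lemma~\ref{lem:spectral-measure-apx}) combined with promising--ness and Ritz optimality, and close with Paley--Zygmund. The resolvent inequality you extract from stagnation is precisely the ``weaker hypothesis'' the paper records as \eqref{eqn:jstag-alt} (that display appears to carry a sign/exponent typo: it should read $\|e_n^*(H-r)^{-k}\|^{-1/k}\ge(1-\gamma)\psi_k(H)$, matching your derivation and the requirement $\tau_{(z-r)^k}(H)\ge\stagrat\,\psi_k(H)$ used in $\exc$).
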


\noindent In fact, one can verify from the proof of our Lemma \ref{lem:main} in \cite[Lemma 2.8]{banks2021global} that the hypothesis \eqref{eqn:jstag} may be replaced with the weaker condition
\begin{equation}
    \label{eqn:jstag-alt}
    \|e_n^\ast (H - r)^{-k}\|^{1/k} \ge (1- \gamma)\pot(H),
\end{equation}
which we will find more useful here.

\subsection{Reader Guide and Parameter Settings} There are many algorithm inputs, constants, and parameters that the reader will encounter; we will collect them here, along with some typical settings. We will regard our main algorithm $\shqr$ in fact as a family of algorithms, indexed by several defining parameters; these in turn used to set a number of global constants used by the algorithm and its subroutines. The most important of the former is the ``non-normality'' or condition number bound $\K$, from which we define the shift degree $k$ to be the smallest power of $2$ for which
\begin{equation}\label{eqn:setk} 
    \K^{\frac{8\log k+3}{k-1}}\cdot (2\K^4)^{\frac{2}{k-1}}\le 3,
\end{equation}
which makes $k = O(\log\K\log\log\K)$. We further define the auxiliary constants
\begin{equation}\label{eqn:settheta}
    \cp:=(1.01\K)^{4\log k/k}\in [1,2],\qquad\r:=\tfrac{1.01}{0.998^{1/k}}(2\K^4)^{1/2k} \in [1,2]\qquad \gamma:=0.2,
\end{equation}
which depend only on $B$.
\begin{table}[h]
\centering
\begin{tabular}{ l l l l}
    \toprule 
    Defining Parameter & Meaning & Typical Setting \\
    \midrule 
    $\K$ & Eigenvector Condition Number Bound & $\K \ge 2\kappa_V(H)$ \\
    $\Gamma$ & Minimum Gap Bound & $\Gamma \le \gap(H)/2$ \\
    $\scale$ & Operator Norm Bound & $\scale \ge 2\|H\|$ \\
    $k$ & Shift Degree & $O(\log\K\log\log\K)$
     \\
    \midrule
    Global Constant & & \\
    \midrule 
    $\cp$ & Ritz Value Promising-ness & $\cp\in [1,2]$ \\
    $\r$ & Ritz Value Optimality & $\r\in[1,2]$ \\
    $\gamma$ & Decoupling Rate & $0.2$ \\
    \bottomrule
\end{tabular}
\caption{Global Data for $\shqr$}
\label{table:qrii-global-data}
\end{table}

Table \ref{table:shqr-params} contains the input parameters for $\shqr$, as well as internal parameters used by its subroutines. The setting of the working accuracy below is to ensure that the norm, eigenvector condition number, and minimum eigenvalue gap are controlled for every matrix $H'$ encountered in the course of the algorithm, in the sense that
$$
    \kappa_V(H') \le 2\kappa_V(H) \le \K \qquad \|H'\| \le 2\|H\| \le \scale \qquad \gap(H') \ge \gap(H)/2 \ge \Gamma.
$$
We will not include the defining parameters or global constants as input to $\shqr$ or its subroutines, and instead assume that all subroutines have access to them; however, we will for clarity keep track of which of this \textit{global data} each subroutine uses, and any constraints that it places on their inputs. Table \ref{table:shqr-subs}  lists the main subroutines (note that we will write $\Sh_{k,\K}$ for the finite arithmetic implementation of $\shkb$).

\begin{table}[h]
\centering
\begin{tabular}{lll}
    \toprule 
    Input Parameter & Meaning & Typical Setting \\
    \midrule
    $H$ & Upper Hessenberg Matrix & {} \\
    $\acc$ & Accuracy &  \\
    $\phi$ & Failure Probability Tolerance &  \\
    && \\
    
    Internal Parameter && \\
    \midrule 
    $\wacc$ & Working Accuracy & $\Omega\left(\min\{\acc n^{-2},\gapbound n^{-3/2}\K^{-2}\}\right)$ \\
    $\varphi$ & Working Failure Probability Tolerance & $\Omega\left(\tfrac{\phi}{\log(\wacc/\scale)}\right)$ \\
    $\tol,\pretol$ & Regularization Parameters & $\Omega(\wacc^2), \Omega(\wacc^2\phi^{-1/2}\scale^{-1})$ \\
    $\forward$ & Forward Accuracy for Ritz Values & $\Omega(\wacc^2\scale^{-1})$ \\
    $\calR$ & Approximate Ritz Values & \\
    $\calS$ & Exceptional Shifts & \\
    \bottomrule
\end{tabular}
\caption{Input and Internal Parameters for $\shqr$}
\label{table:shqr-params}
\end{table}

\begin{table}[h]
\centering
\begin{tabular}{lllll}
    \toprule
    Subroutine & Action & Output & Input & Global Data \\
    \midrule 
    $\iqr$ & Implicit QR Step & $\ax{\next{H}}, \ax R$ & $H,p(z)$ & \\
    $\comptau{m}$ & Approximate $\tau_{p(z)}^m(H) = \|e_n^\ast p(H)^{-1}\|$ & $\ax{\tau^m}$ & $H, p(z)$ & \\
    $\optimal$ & Check Ritz Value Optimality & $\optflag$ & $H,\calR$ & $\r$ \\
    $\regularize$ & Compute $\r$-Optimal Ritz Values & $\next H, \calR, \decouple$ & $H,\wacc,\phi$ & $\scale,\Gamma,\r$ \\
    $\find$ & Find a $\cp$-Promising Ritz Value & $r$ & $H,\calR$ & $\cp$ \\
    $\exc$ & Compute Exceptional Shifts & $\calS$ & $H,r,\wacc,\phi$ & $\K,\scale,\gamma,\r,\cp$ \\
    $\shkb$ & Shifting Strategy to Reduce $\pot(H)$ & $\next H$ & $H, \calR, \wacc, \phi$ & $\K,\scale,\gamma,\r,\cp$ \\
    $\deflate$ & Deflate a Decoupled Matrix & $H_1,H_2,...$ & $H,\wacc$ & \\
    \bottomrule
\end{tabular}
\caption{Subroutines of $\shqr$}
\label{table:shqr-subs}
\end{table}

\paragraph{Absolute vs. Relative Decoupling.} Because $\shqr$ and its subroutines do not have direct access to the norms of matrices, we will find it useful for the remainder of the paper to work with an \textit{absolute} notion of decoupling, instead of the relative one used in \cite{banks2021global}. In particular, we will say that a matrix $H$ is \textit{$\wacc$-decoupled} if one of its $k$ bottom subdiagonal entries is smaller than $\wacc$ (as opposed to $\wacc\|H\|$), and \textit{$\wacc$-unreduced} if every one of its $k$ bottom subdiagonal entries is larger than $\wacc$.
\section{Implicit QR: Implementation, Forward Stability, and Regularization}

In Section \ref{sec:backwardiqr} we present a standard implementation (called ``$\iqr$'') of a degree $1$ (i.e., single shift) implicit QR step  using Givens rotations (see \cite[Section 4.4.8]{demmel1997applied}) and provide an analysis of its backward stability which is slightly stronger than the guarantees of \cite{tisseur1996backward}\footnote{\cite{tisseur1996backward} uses Householder reflectors instead of Givens rotations. We have chosen the latter for simplicity of exposition, but the stronger backward stability analysis obtained in Lemma \ref{lem:iqr-single-guarantees} can also be shown for Householder reflectors. }. We then use this to give a corresponding backward error bound for a degree $k$ $\iqr$ step. We suspect much of this material is already known to experts, but we could not find it in the literature so we record it here.

In Section \ref{sec:forwardiqr} we prove bounds on the forward error of a degree $k$ $\iqr$ step in terms of the distance of the shifts to the spectrum; we will accordingly refer to shifts which are appropriately far away from the spectrum as {\em forward stable}. We also record a forward error bound on the bottom right entry $R_{nn}$ of the QR factorization, which is used in analyzing many shifting strategies.

We show in Section \ref{sec:regularization} that a sufficiently large random perturbation of any choice of shifts is commensurately forward stable, with high probability. 

\label{sec:implicitQR}

\subsection{Description and Backward Stability of $\iqr$}\label{sec:backwardiqr}

We begin with some preliminaries on implicit QR steps in exact arithmetic.
\begin{definition}
    The \textit{QR decomposition} of an invertible matrix $M$ is the unique  factorization $M = QR$ where $Q$ is unitary and $R$ is upper triangular with positive diagonal entries. We will use $$[Q, R] =\qr(M)$$ 
    to signal that $Q$ and $R$ are the matrices coming from the QR decomposition of $M$. 
\end{definition}

Given a polynomial $p(z)$ and a Hessenberg matrix $H$, $\exactqr(H,p(z) )$ will denote the matrix $\next{H} = Q^* H Q$ where $[Q, R] =p(H) $. When $p(z)=z-s$ we will use $\exactqr(H, s)$ as a shorthand notation for $\exactqr(H, z-s)$. We will also denote by $\varkappa(M):= \|M\|\|M^{-1}\|$ the condition number of a matrix $M$. We pause to verify a fundamental composition property of $\exactqr$; the proof is standard (e.g. see \cite[Section 2.3]{tisseur1996backward}), but we will need to adapt it in the sequel so we include it for the reader's convenience.

\begin{lemma}
    \label{lem:exact-iqr-composition}
    For any invertible $H$ and polynomial $p(z) = (z - r_1)\cdots(z-r_k)$,
    \begin{equation}
        \label{eq:exact-iqr-composition}
        \exactqr(H,p(z)) = \exactqr( \cdots \exactqr(\exactqr(H,r_1),r_2),...,r_k).
    \end{equation}
    Moreover, if $[Q, R] = \qr(p(H))$, $H_1 = H$, and for each $\ell \in [k]$ we set $[Q_\ell ,R_\ell]:= \qr(H_\ell - r_\ell) $ and $H_{\ell+1} := Q_\ell^\ast H_\ell Q_\ell$, then
    \begin{equation}
        \label{eq:multishiftedQR}
        Q=Q_1 \cdots Q_k \quad \text{and} \quad R= R_k R_{k-1}\cdots R_1.
    \end{equation}
\end{lemma}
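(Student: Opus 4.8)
The statement to prove is Lemma~\ref{lem:exact-iqr-composition}: a composition property for exact implicit QR steps.

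\bigskip

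The plan is to prove the ``Moreover'' part first --- that the QR factors of $p(H)$ decompose as $Q = Q_1\cdots Q_k$ and $R = R_k\cdots R_1$ --- since the first displayed equation \eqref{eq:exact-iqr-composition} follows from it immediately by telescoping the conjugations. The key algebraic identity to establish is that for each $\ell$,
\begin{equation*}
    (H_1 - r_1)(H_2 - r_2)\cdots(H_\ell - r_\ell) = Q_1\cdots Q_\ell \, R_\ell R_{\ell-1}\cdots R_1,
\end{equation*}
which I would prove by induction on $\ell$. The base case $\ell = 1$ is the definition of $[Q_1, R_1] = \qr(H_1 - r_1)$. For the inductive step, I would use the conjugation relation $H_{\ell+1} = Q_\ell^\ast H_\ell Q_\ell$ to rewrite $H_\ell - r_\ell = Q_\ell(H_{\ell+1} - r_\ell)Q_\ell^\ast$, and more generally push the shifted factors through: the crucial observation is that $Q_1\cdots Q_\ell (H_{\ell+1} - r_{\ell+1}) = (H_1 - r_{\ell+1}) Q_1 \cdots Q_\ell$ because $H_1$ and $H_{\ell+1} = (Q_1\cdots Q_\ell)^\ast H_1 (Q_1\cdots Q_\ell)$ are unitarily conjugate. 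This lets me insert the new factor $(H_{\ell+1} - r_{\ell+1}) = Q_{\ell+1} R_{\ell+1}$ and absorb $Q_{\ell+1}$ into the unitary part and $R_{\ell+1}$ into the triangular part (using that a product of upper triangular matrices is upper triangular, and here all $R_\ell$ have positive diagonal so the product does too). Setting $\ell = k$ gives $p(H) = Q_1\cdots Q_k \cdot R_k\cdots R_1$; since $Q_1\cdots Q_k$ is unitary and $R_k\cdots R_1$ is upper triangular with positive diagonal, uniqueness of the QR decomposition of the invertible matrix $p(H)$ forces $Q = Q_1\cdots Q_k$ and $R = R_k \cdots R_1$.

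\bigskip

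For \eqref{eq:exact-iqr-composition} itself: by definition $\exactqr(H, p(z)) = Q^\ast H Q = (Q_1\cdots Q_k)^\ast H (Q_1\cdots Q_k)$, and on the other hand unwinding the nested right-hand side gives exactly $Q_k^\ast(\cdots(Q_1^\ast H Q_1)\cdots)Q_k = (Q_1\cdots Q_k)^\ast H (Q_1\cdots Q_k)$, where at each stage the single-shift step $\exactqr(H_\ell, r_\ell) = H_{\ell+1}$ matches the $H_{\ell+1} = Q_\ell^\ast H_\ell Q_\ell$ appearing in the induction. So both sides equal $H_{k+1}$.

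\bigskip

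The main obstacle --- though it is a mild one --- is handling the commutation $Q_1\cdots Q_\ell(H_{\ell+1} - r_{\ell+1}) = (H_1 - r_{\ell+1})Q_1\cdots Q_\ell$ carefully and making sure the order of the triangular factors comes out as $R_k\cdots R_1$ (reversed) rather than $R_1\cdots R_k$; this is exactly where one must track which side each $R_\ell$ gets absorbed on. One should also note at the outset that invertibility of $H$ (hence of $p(H)$, assuming the $r_i$ avoid $\Spec(H)$ --- or more precisely, one needs each $H_\ell - r_\ell$ invertible, which is what makes each $\qr(H_\ell - r_\ell)$ well-defined with positive diagonal) is what legitimizes the uniqueness argument; I would state this hypothesis explicitly. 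Everything else is bookkeeping with unitary conjugations.
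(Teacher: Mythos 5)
Your high-level plan is the same one the paper uses: prove the factorization $Q = Q_1\cdots Q_k$, $R = R_k\cdots R_1$ by peeling off one linear factor at a time via the commutation rule $(H_1 - r)\,Q_1\cdots Q_\ell = Q_1\cdots Q_\ell\,(H_{\ell+1} - r)$ --- which you correctly single out as the crucial observation --- and then invoke uniqueness of the QR decomposition. However, the intermediate inductive claim you write down is false. You assert
\[
(H_1 - r_1)(H_2 - r_2)\cdots(H_\ell - r_\ell) = Q_1\cdots Q_\ell\, R_\ell R_{\ell-1}\cdots R_1,
\]
but the left-hand side is not $p_\ell(H)$: the matrices $H_2, H_3, \ldots$ are genuinely different from $H_1$, so this product is not a polynomial in $H$. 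The identity already fails at $\ell = 2$. Directly substituting the definitions gives $(H_1-r_1)(H_2-r_2) = (Q_1R_1)(Q_2R_2)$, whereas the claimed right-hand side is $Q_1Q_2R_2R_1$; equality would force $R_1Q_2R_2 = Q_2R_2R_1$, which has no reason to hold. A concrete $2\times 2$ check confirms this: with $H = \bigl(\begin{smallmatrix} 2 & 1 \\ 1 & 0\end{smallmatrix}\bigr)$ and $r_1 = r_2 = 0$, one finds $(H_1)(H_2) \neq Q_1Q_2R_2R_1 = H^2$. Moreover your inductive step, as described, inserts the new factor on the right, but the commutation you invoke only moves a factor of the form $(H_1 - r_{\ell+1})$ that sits to the \emph{left} of $Q_1\cdots Q_\ell$; these two things do not fit together.

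The fix is small but essential: the correct inductive claim keeps every factor expressed in terms of the original matrix,
\[
(H_1 - r_\ell)(H_1 - r_{\ell-1})\cdots(H_1 - r_1) = Q_1\cdots Q_\ell\, R_\ell\cdots R_1,
\]
and the step from $\ell$ to $\ell+1$ multiplies on the \emph{left} by $(H_1 - r_{\ell+1})$. Passing this factor across $Q_1\cdots Q_\ell$ by your commutation yields $Q_1\cdots Q_\ell (H_{\ell+1}-r_{\ell+1}) R_\ell\cdots R_1$, and substituting $H_{\ell+1} - r_{\ell+1} = Q_{\ell+1}R_{\ell+1}$ finishes the step. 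At $\ell = k$ the left-hand side equals $p(H)$ because the factors commute, and uniqueness of QR applies since the accumulated $R$-product is upper triangular with positive diagonal. This corrected induction is exactly what the paper carries out as a chain of equalities starting from $p(H) = (H_1 - r_k)\cdots(H_1 - r_1)$. Your caveat about invertibility is well taken --- one needs $p(H)$ invertible (equivalently, every $r_i \notin \Spec H$) for the $\qr(\cdot)$'s to be well defined with positive diagonal, a hypothesis the paper's statement leaves implicit.
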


\begin{proof}
    Repeatedly using definition of $Q_\ell$, $R_\ell$, and $H_\ell$ for each $\ell \in [k]$, we can compute
    \begin{align*}
        p(H) = p(H_1) &= (H_1 - r_k)\cdots(H_1 - r_1) \\
        &= (H_1 - r_k) \cdots (H_1 - r_2)Q_1 R_1 & & H_1 - r_1 = Q_1R_1 \\
        &= (H_1 - r_k) \cdots Q_1(H_2 - r_2)R_1 & & H_2 = Q_1^\ast H_1 Q_1 \\
        &= (H_1 - r_k) \cdots (H_1 - r_3)Q_1Q_2R_2R_1 & & H_2 - r_2 = Q_2R_2 \\
        &= Q_1 Q_2 \cdots Q_k R_k R_{k-1} \cdots R_1, & & \text{etc.}
    \end{align*}
    where in the final equality we continue passing $Q_1\cdots Q_\ell$ across the term $H_1 - r_\ell$ and then replace the resulting $H_\ell - r_\ell = Q_\ell R_\ell$. Since each $Q_\ell$ is unitary and $R_\ell$ has positive diagonal entries, uniqueness of the QR decomposition gives $Q = Q_1 \cdots Q_k$ and $R = R_k \cdots R_1$ as desired. The composition property \eqref{eq:exact-iqr-composition} is then immediate.
\end{proof}

The following corollary will be repeatedly useful.

\begin{lemma}
    Under the hypotheses of Lemma \ref{lem:exact-iqr-composition},
    \begin{equation}
        \label{eq:Rs}
        \|e_n^* p(H)^{-1}\|^{-1} = R_{nn} = (R_1)_{nn} \cdots (R_k)_{nn} 
    \end{equation}
\end{lemma}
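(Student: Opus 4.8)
The plan is to peel the identity apart into its two equalities and dispatch each with a short unitary‑invariance argument. For the second equality $R_{nn} = (R_1)_{nn}\cdots(R_k)_{nn}$, I would simply invoke the factorization $R = R_k R_{k-1}\cdots R_1$ from \eqref{eq:multishiftedQR} in Lemma \ref{lem:exact-iqr-composition}, together with the elementary fact that the product of upper triangular matrices is upper triangular and its $(n,n)$ entry is the product of the individual $(n,n)$ entries (this is immediate by induction, since the last row of an upper triangular matrix is supported on its last coordinate). Note that each $H_\ell - r_\ell$ is invertible, because the commuting factors multiply to the invertible matrix $p(H)$, so each $R_\ell$ indeed has a well‑defined positive diagonal.

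For the first equality $\|e_n^\ast p(H)^{-1}\|^{-1} = R_{nn}$, I would write $p(H) = QR$ with $Q$ unitary, so $p(H)^{-1} = R^{-1}Q^\ast$, and use unitary invariance of the $\ell_2$ norm to get $\|e_n^\ast p(H)^{-1}\| = \|e_n^\ast R^{-1}\|$. Since $R^{-1}$ is again upper triangular, its last row is $R_{nn}^{-1} e_n^\ast$ (the diagonal of the inverse of a triangular matrix consists of the reciprocals of the diagonal entries), hence $\|e_n^\ast R^{-1}\| = R_{nn}^{-1}$, which rearranges to the claim. Chaining the two equalities completes the proof.

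I do not anticipate any real obstacle here; the only points requiring a modicum of care are (a) confirming that $p(H)$ — and hence each $H_\ell - r_\ell$ — is invertible so that all the QR decompositions and inverses in the statement make sense (this is part of the standing hypotheses inherited from Lemma \ref{lem:exact-iqr-composition}), and (b) being explicit that it is the \emph{last row} of a triangular matrix and its inverse that is supported on a single coordinate, which is what lets both the unitary factor $Q$ drop out and the diagonal product identity go through cleanly.
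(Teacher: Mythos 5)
Your proposal is correct and takes essentially the same route as the paper: write $p(H)^{-1} = R^{-1}Q^\ast$, drop $Q$ by unitary invariance, use that the last row of the triangular $R^{-1}$ is $R_{nn}^{-1}e_n^\ast$ to get $\|e_n^\ast p(H)^{-1}\| = R_{nn}^{-1}$, and then read off the product formula from $R = R_k\cdots R_1$ as given in \eqref{eq:multishiftedQR}. You spell out slightly more of the triangular-matrix bookkeeping than the paper does, but the argument is the same.
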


\begin{proof}
    Maintaining the notation of Lemma \ref{lem:exact-iqr-composition}, we have
    $$
        \|e_n^* p(H)^{-1}\| = \|e_n^* R^{-1} Q^* \| = \|e_n^* R^{-1}\| = \frac{1}{R_{n,n}},
    $$
    and the proof is concluded by observing that \eqref{eq:multishiftedQR} implies $R_{n,n} = (R_1)_{n,n} \cdots (R_k)_{n,n} $.
\end{proof}

We will require the following definition of backward stability for a degree $1$ implicit QR step. The difference between this and the backward stability condition considered in \cite{tisseur1996backward} is the additional second equation below.

\begin{definition}[Backward-Stable Degree $1$ Implicit QR Algorithm]
\label{def:stableiqr}
    A $\muqr(n)$-stable single-shift implicit QR algorithm takes as inputs a Hessenberg matrix $H \in \bC^{n\times n}$ and a shift $s \in \bC$ and outputs a Hessenberg matrix $\ax{\next{H}}$ and an exactly triangular matrix $\ax{R}$, for which there exists a unitary $\ax{Q}$ satisfying
    \begin{align}
        \left\|\ax{\next{H}}-\ax{Q}^\ast H \ax{Q}\right\| &\leq \|H - s\| \muqr(n) \mach \label{eq:iqr-apx-next} \\
        \left\|H - s - \ax{Q}\ax{R}\right\| &\leq \|H - s\|\muqr(n)\mach \label{eq:iqr-apx-qr} 
    \end{align}
\end{definition}

\newcommand{\HessenbergQR}{\mathsf{HessenbergQR}}

We now verify that there is a suitable backward-stable implicit QR algorithm. The pseucodode of $\iqr$ given below is a standard implementation based on Givens rotations. We use {\sf sans serif} fonts to indicate subroutines implemented in finite arithmetic.

\begin{figure}[h]
    \begin{boxedminipage}{\textwidth}
        $$ \iqr $$
        \textbf{Input:} Hessenberg $H$, shift $s \in \bC$ \\
        \textbf{Output:} Hessenberg $\ax{\next H}$ and triangular $\ax{R}$ \\
        \textbf{Ensures:} $\|\ax{\next H}\| \le \|H\| + 32n^{3/2}\mach\cdot\|H -s\|,$ and there exists unitary $\ax Q$ for which $\|\ax{\next{H}} - \ax{Q}^\ast H \ax{Q}\| \le 32 n^{3/2}\mach  \cdot\|H - s\|$ and $\|H - s - \ax{Q} \ax{R}\| \le  16 n^{3/2}\mach \cdot \|H -s \|$
        \begin{enumerate}
            \item $\ax{R} \gets H - s$
            \item \textbf{For} $i = 1,2,...,n-1$
            \begin{enumerate}
                \item $X_{1:2,i} \gets \ax{R}_{i:i+1,i}$
                \item $\ax{R}_{i:i+1,i+1,n} \gets \giv(X_{1:2,i})^\ast\ax{R}_{i:i+1,i+1:n} + E_{2,i,b}$
                \item $\ax{R}_{i:i+1,i} \gets \begin{pmatrix} \|X_{1:2,i}\| + E_{2,i,c} \\ 0 \end{pmatrix}$
            \end{enumerate}
            \item $\ax{\next{H}} \gets \ax{R}$
            \item \textbf{For} $i = 1,2,...n-1$
            \begin{enumerate}
                \item $\ax{\next{H}}_{1:n,i:i+1} \gets \ax{\next{H}}_{1:n,i:i+1}\giv(X_{1:2,i}) + E_{4,i}$
            \end{enumerate}
            \item $\ax{\next{H}} \gets \ax{\next{H}} + s$
        \end{enumerate}
    \end{boxedminipage}
\end{figure}
\begin{lemma}[Backward Stability of Degree $1$ $\iqr$] \label{lem:iqr-single-guarantees}
    Assuming 
    \begin{align}
        \mach \le \min\left\{\frac{1}{24} , \frac{\log 2}{8n^{5/2}}\right\} = 2^{-O(\log n)},
    \end{align}
    $\iqr$ satisfies its guarantees and uses at most $7n^2$ arithmetic operations. In particular, it is a $\muqr(n)$-stable implicit QR algorithm for $\muqr(n) = 32 n^{3/2}$.
\end{lemma}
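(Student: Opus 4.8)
The plan is to carry out a standard running-error analysis of the pseudocode, organized around one observation: although the $2\times2$ rotations produced inside $\iqr$ depend on previously accumulated rounding errors, we may still compare the actual run to an \emph{exact} run of the very same rotations. Concretely, let $G_i$ be the exact unitary $\giv(X_{1:2,i})$ (embedded into coordinates $i,i{+}1$) associated to whatever finite-precision vector $X_{1:2,i}$ the algorithm stores in step 2(a), and set $\ax Q := \giv(X_{1:2,1})\cdots\giv(X_{1:2,n-1})$ with each factor so embedded. A product of unitaries is unitary, so $\ax Q$ is exactly unitary, and the same $X_{1:2,i}$ are reused in step 4 by construction; hence in exact arithmetic steps 1--5 would output precisely $\ax R = \ax Q^\ast(H-s)$ (upper triangular) and $\ax{\next H} = \ax Q^\ast H \ax Q$. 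It thus suffices to bound the deviation of the actual output from this idealized one. There is no circularity: $\ax Q$ is \emph{defined} from the finite-precision data, not from any exact matrix.

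I would then track the error sweep by sweep, all norms below being Frobenius. \emph{Step 1} replaces $H-s$ by $H-s+E_0$ with $E_0$ diagonal and $\|E_0\|\le\mach\|H-s\|$ (off-diagonal entries are copied exactly; each diagonal entry $h_{ii}-s$ incurs relative error $\mach$). \emph{Step 2} computes $\ax R^{(i)} = G_i \ax R^{(i-1)} + F_i$ for $i=1,\dots,n-1$, where $F_i$ collects the error $E_{2,i,b}$ of applying $\giv(X_{1:2,i})^\ast$ to the touched block and the error $E_{2,i,c}$ of recomputing the pivot norm (the subdiagonal being set to exactly $0$, so the final $\ax R$ really is triangular; that the rotations preserve the Hessenberg/triangular structure in already-processed columns is the usual bulge-chasing bookkeeping). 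The elementary Givens estimates from the preliminaries give $\|F_i\|_F = O(\mach)\|\ax R^{(i-1)}\|_F$, and since left-multiplication by the unitary $G_i$ preserves norms, the hypothesis $\mach\le\log 2/(8n^{5/2})$ keeps the geometric factor controlled: $\|\ax R^{(i)}\|_F\le(1+O(\mach))^i\|\ax R^{(0)}\|_F\le 2\|H-s\|_F\le 2\sqrt n\,\|H-s\|$. Telescoping $\ax R^{(n-1)} = G_{n-1}\cdots G_1\ax R^{(0)} + \sum_{j}(G_{n-1}\cdots G_{j+1})F_j$ and absorbing $E_0$ yields $\|\ax R - \ax Q^\ast(H-s)\|\le\mach\|H-s\| + \sum_j\|F_j\|_F\le 16 n^{3/2}\mach\|H-s\|$, whence $\|H-s-\ax Q\ax R\|\le 16 n^{3/2}\mach\|H-s\|$ after left-multiplying by $\ax Q$.

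\emph{Steps 3--5} apply the same rotations from the right: $\ax{\next H}^{(i)} = \ax{\next H}^{(i-1)}G_i^\ast + E_{4,i}$ with $\|E_{4,i}\|_F = O(\mach)\|\ax{\next H}^{(i-1)}\|_F$, and the identical norm-growth bookkeeping keeps $\|\ax{\next H}^{(i)}\|_F = O(\sqrt n\,\|H-s\|)$. Combining this with the Step 2 bound and the identity $\ax R\,\ax Q = \ax Q^\ast(H-s)\ax Q = \ax Q^\ast H \ax Q - sI$ (before step 5 restores the shift), all errors add up to $\|\ax{\next H} - \ax Q^\ast H \ax Q\|\le 32 n^{3/2}\mach\|H-s\|$; the bound $\|\ax{\next H}\|\le\|H\| + 32 n^{3/2}\mach\|H-s\|$ is then immediate since $\|\ax Q^\ast H \ax Q\| = \|H\|$. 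The arithmetic cost is $O(\sum_i(n-i)) + O(\sum_i n) = O(n^2)$, and a direct tally of the $2\times2$-by-block products plus the $O(n)$ of steps 1 and 5 gives the stated $7n^2$; the two hypotheses on $\mach$ are used precisely for the elementary Givens bounds ($\mach\le 1/24$) and to keep the factors $(1+O(\mach))^n$ below $2$. Comparing the three displayed guarantees with Definition \ref{def:stableiqr} then certifies $\iqr$ as a $\muqr(n)$-stable implicit QR algorithm with $\muqr(n) = 32 n^{3/2}$.

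The only genuine work is the constant-chasing in the two sweeps: charging $E_{2,i,c}$ against $\|\ax R^{(i-1)}\|$ without double-counting, and ordering the estimates so that the multiplicative norm growth is absorbed \emph{before} the per-step errors are summed additively. Neither point is deep, but it is where the constants $16$ and $32$ --- and the conservative $n^{3/2}$, which comes from the crude bound $\|\cdot\|_F\le\sqrt n\,\|\cdot\|$ applied to the block rotated at each step together with the $n-1$ sweeps --- have to be nailed down.
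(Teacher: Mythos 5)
Your proposal follows essentially the same route as the paper's proof in Appendix A: both define $\ax Q$ as the product of the exact Givens rotations determined by the finite-precision vectors $X_{1:2,i}$ actually stored by the algorithm (so $\ax Q$ is exactly unitary by construction), track the drift of the two sweeps via structured per-step error matrices $E_{2,i}$, $E_{4,i}$ with a geometric norm-growth bound controlled by $\mach\le\log 2/(8n^{5/2})$, and then telescope. The only substantive differences are cosmetic bookkeeping — you run the intermediate estimates in Frobenius norm of the full matrix (picking up $\sqrt n$ from $\|H-s\|_F\le\sqrt n\|H-s\|$), whereas the paper bounds $\|E_{2,i}\|$ directly from the width of the $2\times(n-i)$ touched block and the operator-norm bound on $\|\ax H_{i-1}\|$ — and, as you note, the constants $16$ and $32$ require the per-step losses to be absorbed multiplicatively before summing, which your sketch identifies but leaves undone.
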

\begin{proof}[The straightforward proof is deferred to Appendix \ref{sec:implicitQRdeferred}]
\end{proof}.

We now extend the definition of $\iqr$ to   shifts of higher degree. We take the straightforward approach of composing many degree $1$ QR steps to obtain a higher degree one. Given a Hessenberg matrix $H$, an implicit QR algorithm $\iqr$ satisfying Definition \ref{def:stableiqr}, and shifts $s_1,\ldots,s_k$, we will define 
\begin{equation}
    \iqr(H,\{s_1,\ldots,s_k\}) := \iqr(\iqr(\cdots \iqr(\iqr(H,s_1),s_2), \cdots), s_k),
\end{equation}
which can be executed in $T_{\iqr}(n,k) = 7kn^2$ arithmetic operations. We will sometimes use the notation
$$\iqr(H,p(z))=\iqr(H,\{s_1,\ldots,s_k\})$$
where $p(z)=(z-s_1)\ldots(z-s_k)$, though it is understood that $\iqr$ takes the roots of $p$ and not its coefficients as input. Lemma \ref{lem:iqr-single-guarantees} is readily adapted to give backward stability guarantees for $\iqr(H,p(z))$.

\begin{lemma}[Backward Error Guarantees for Higher Degree $\iqr$]
    \label{lem:iqr-multi-backward-guarantees}
    Fix $C > 0$ and let $p(z) = \prod_{\ell \in [k]}(z - s_\ell)$, where $\calS = \{s_1,...,s_k\} \subset \mathbb{D}(0,C\|H\|)$. Write $\Big[\ax{\next{H}},\ax{R}_1,...,\ax{R}_k\Big] = \iqr(H,p(z))$, and let $\ax{Q}_\ell$ be the unitary guaranteed by Definition \ref{def:stableiqr} to the $\ell$th internal call to $\iqr$. Assuming
    $$
        \muqr(n)\mach \le 1/4,
    $$
    the outputs $\ax{R} = \ax{R}_k \cdots \ax{R}_1$ and $\ax{Q} = \ax{Q}_1 \cdots \ax{Q}_k$ satisfy
    \begin{align}
        \left\|\ax{\next{H}} - \ax{Q}^\ast H \ax{Q} \right\| \le 1.4 k(1 + C)\|H\|\muqr(n)\mach \\
        \left\|p(H) - \ax{Q}\ax{R}\right\| \le 4\Big(2(1 + C)\|H\|\Big)^k\muqr(n)\mach.
    \end{align}
\end{lemma}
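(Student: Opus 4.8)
The plan is to iterate the degree-$1$ backward guarantee of Lemma \ref{lem:iqr-single-guarantees} (equivalently Definition \ref{def:stableiqr}) along the composition chain $\iqr(\cdots\iqr(\iqr(H,s_1),s_2)\cdots,s_k)$, keeping careful track of how the operator norm of the running Hessenberg iterate grows. First I would set up notation: let $H_1 = H$ and let $\apx{H}_{\ell+1}$, $\apx{R}_\ell$ be the outputs of the $\ell$-th internal call $\iqr(\apx{H}_\ell,s_\ell)$, with $\apx{Q}_\ell$ the unitary from Definition \ref{def:stableiqr} so that $\|\apx{H}_{\ell+1} - \apx{Q}_\ell^\ast \apx{H}_\ell \apx{Q}_\ell\| \le \|\apx{H}_\ell - s_\ell\|\,\muqr(n)\mach$ and $\|\apx{H}_\ell - s_\ell - \apx{Q}_\ell\apx{R}_\ell\| \le \|\apx{H}_\ell - s_\ell\|\,\muqr(n)\mach$. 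Since all $s_\ell \in \mathbb{D}(0,C\|H\|)$, we have $\|\apx{H}_\ell - s_\ell\| \le \|\apx{H}_\ell\| + C\|H\|$, so everything will reduce to bounding $\|\apx{H}_\ell\|$.

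The key quantitative step is a growth estimate: I claim $\|\apx{H}_{\ell}\| \le (1 + \muqr(n)\mach)^{\ell-1}\|H\| + \big((1+\muqr(n)\mach)^{\ell-1} - 1\big)C\|H\|$ or, more cleanly under the hypothesis $\muqr(n)\mach \le 1/4$, something like $\|\apx{H}_\ell\| \le \|H\| + (\ell-1)\cdot 2\muqr(n)\mach\,(1+C)\|H\| \le (1+C)\|H\|(1 + \text{small})$ for all $\ell \le k$, using $(1+x)^{k} \le 1 + 2kx$ when $kx$ is bounded. This is proved by induction: $\|\apx{H}_{\ell+1}\| \le \|\apx{Q}_\ell^\ast\apx{H}_\ell\apx{Q}_\ell\| + \|\apx{H}_\ell - s_\ell\|\muqr(n)\mach \le \|\apx{H}_\ell\| + (\|\apx{H}_\ell\| + C\|H\|)\muqr(n)\mach$, and one checks the induction closes with room to spare because $k\muqr(n)\mach$ stays bounded (this is where the hypothesis $\muqr(n)\mach \le 1/4$, together with $k$ being moderate, does its work — in fact the clean bound $\|\apx{H}_\ell - s_\ell\| \le 2(1+C)\|H\|$ will suffice for the crude second inequality). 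With this in hand, the first conclusion follows by a telescoping estimate: $\apx{Q}^\ast H \apx{Q} - \apx{H}_{k+1} = \sum_{\ell=1}^{k} \apx{Q}_k^\ast\cdots\apx{Q}_{\ell+1}^\ast(\apx{Q}_\ell^\ast \apx{H}_\ell\apx{Q}_\ell - \apx{H}_{\ell+1})\apx{Q}_{\ell+1}\cdots\apx{Q}_k$, and unitary invariance of the norm plus the per-step bound $\le \|\apx{H}_\ell - s_\ell\|\muqr(n)\mach \le 2(1+C)\|H\|\muqr(n)\mach$ (refined to the sharper $(1+\text{small})(1+C)\|H\|$ bound) gives $\|\apx{Q}^\ast H\apx{Q} - \apx{H}_{k+1}\| \le 1.4k(1+C)\|H\|\muqr(n)\mach$, where the constant $1.4$ absorbs the mild growth factor.

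For the second conclusion I would mimic the telescoping proof of the exact composition identity \eqref{eq:multishiftedQR} in Lemma \ref{lem:exact-iqr-composition}, but carry an error term through each substitution. Write $p(H) = (H-s_k)\cdots(H-s_1)$ and replace $H-s_1 = (H_1 - s_1)$ by $\apx{Q}_1\apx{R}_1$ up to error $\le \|\apx{H}_1 - s_1\|\muqr(n)\mach$; then commute $\apx{Q}_1$ leftward past $H - s_2$, which introduces a commutator error controlled by $\|\apx{Q}_1^\ast H\apx{Q}_1 - \apx{H}_2\|$ (already bounded above by the per-step estimate), replace $\apx{H}_2 - s_2 = \apx{Q}_2\apx{R}_2$ up to error, and continue. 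Each of the $O(k)$ substitution/commutation steps contributes an error, and crucially each such error gets multiplied by the product of the remaining factors $\|H - s_j\|$ (or $\|\apx{R}_j\|$), each of which is at most $2(1+C)\|H\|$ — so the worst term is of order $k \cdot \big(2(1+C)\|H\|\big)^{k-1}\cdot (1+C)\|H\| \muqr(n)\mach$; being generous with the constant gives the stated $4\big(2(1+C)\|H\|\big)^k\muqr(n)\mach$. I expect the main obstacle to be bookkeeping discipline rather than any genuine difficulty: one must be careful that the "error so far" is always a matrix (not yet factored into $\apx{Q}\apx{R}$ form) whose norm is bounded, that the commutation step reuses exactly the quantity bounded in part one, and that the norms $\|\apx{R}_\ell\| \le \|\apx{H}_\ell - s_\ell\| + \|\apx{H}_\ell - s_\ell - \apx{Q}_\ell\apx{R}_\ell\| \le (1 + \muqr(n)\mach)\|\apx{H}_\ell - s_\ell\| \le 2(1+C)\|H\|$ are uniformly controlled — this last point is why the hypothesis on $\muqr(n)\mach$ and the a priori bound on $\|\apx{H}_\ell\|$ are both needed before the second estimate can even be stated.
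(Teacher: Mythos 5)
Your proposal takes essentially the same approach as the paper's: iterate the degree-$1$ backward guarantee of Definition~\ref{def:stableiqr} across the chain $\apx{H}_1 = H$, $\apx{H}_{\ell+1} = \iqr(\apx{H}_\ell,s_\ell)$; control the norm growth $\|\apx{H}_\ell\|$ by induction; telescope the per-step similarity errors for the first bound; and rerun the substitutions from the exact composition identity (Lemma~\ref{lem:exact-iqr-composition}) with the two error terms $E_\ell = \apx{H}_\ell - s_\ell - \apx{Q}_\ell\apx{R}_\ell$ and $\Delta_{\ell+1}$ (your ``commutator error'') inserted at each step, paying $\approx 2(1+C)\|H\|$ for each remaining factor.

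One remark worth recording. Your closed-form growth bound $\|\apx{H}_\ell\| \le (1+\muqr(n)\mach)^{\ell-1}(1+C)\|H\| - C\|H\|$ is the correct solution of the recursion, and you are candid that converting it to the clean per-step bound $\|\apx{H}_\ell - s_\ell\| \le 2(1+C)\|H\|$ (or the refined $(1+o(1))(1+C)\|H\|$ used to land on $1.4k$) needs $k\,\muqr(n)\mach$ moderate, not just $\muqr(n)\mach \le 1/4$. The paper's proof is in fact slightly sloppier here --- the inductive bound it asserts for $\|\apx{H}_\ell\|$ (a single geometric tail $\muqr(n)\mach + \cdots + (\muqr(n)\mach)^\ell$) does not quite follow by summing the per-step errors, and the last displayed inequality chain contains a sign typo --- so your version of the induction is the sounder one. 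Both proofs ultimately rely on $k\,\muqr(n)\mach$ being well under control, which is harmless in context because downstream calls set $\mach$ exponentially small in $k$, but is not literally implied by the lemma's stated hypothesis alone. This is a shared imprecision, not a gap unique to your argument.
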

\begin{proof}[The straightforward proof is deferred to Appendix \ref{sec:implicitQRdeferred}]
\end{proof}

\subsection{Forward Stability of Higher Degree $\iqr$}\label{sec:forwardiqr} 
In this subsection we prove {forward error} guarantees for $\iqr(H,p(z))$ using the backward error guarantees of the previous section. Let us first recall the following bound on the condition number of the QR decomposition \cite[Theorem 1.6]{sun1991perturbation}. 

\begin{lemma}[Condition Number of the QR Decomposition] \label{lem:condQR}
    Let $M, E\in \mathbb{C}^{n\times n}$ with $M$ invertible. Furthermore assume that $\|E\|\|M^{-1}\|\leq \frac{1}{2}$. If $[Q, R]=\qr(M)$  and $[\ax{Q}, \ax{R}] = \qr( M+E)$,  then 
    \[ 
        \| \ax{Q} - Q \|_F \le 4 \Vert M^{-1} \Vert \Vert E \Vert_F \quad \text{and} \quad \|\ax{R}-R\| \leq 3\|M^{-1}\|\|R\|\|E\|.
    \]
\end{lemma}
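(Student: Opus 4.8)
Since the statement is quoted verbatim from \cite[Theorem 1.6]{sun1991perturbation}, one option is simply to invoke that reference, but here is the plan I would follow for a self-contained proof. The natural approach is a homotopy argument, differentiating the $\qr$ factorization along the straight-line path $M(t) := M + tE$, $t \in [0,1]$. First I would use the hypothesis $\|E\|\,\|M^{-1}\| \le \tfrac12$ to see that each $M(t)$ is invertible with $\|M(t)^{-1}\| \le \|M^{-1}\|/(1 - t\|E\|\,\|M^{-1}\|) \le 2\|M^{-1}\|$, and that the factors $[Q(t), R(t)] = \qr(M(t))$ vary smoothly with $t$ (via the implicit function theorem applied to the normalization constraints, or directly from the Gram--Schmidt formulas), with $Q(0) = Q$, $R(0) = R$, $Q(1) = \ax{Q}$, $R(1) = \ax{R}$. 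Differentiating $M(t) = Q(t)R(t)$ and multiplying by $Q(t)^\ast$ on the left and $R(t)^{-1}$ on the right gives
\[
    Q(t)^\ast E R(t)^{-1} \;=\; \Omega(t) + U(t),
\]
where $\Omega(t) := Q(t)^\ast \dot Q(t)$ is skew-Hermitian (differentiate $Q^\ast Q = I$) and $U(t) := \dot R(t) R(t)^{-1}$ is upper triangular with real diagonal (differentiate the constraint that $R$ have positive real diagonal).

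The key point is that ``skew-Hermitian $+$ upper-triangular-with-real-diagonal'' is a \emph{unique} additive decomposition of an arbitrary matrix $A$, and that both summands are controlled by $\|A\|$: writing $\Omega$ and $U$ out in terms of the strictly-lower, diagonal, and strictly-upper parts of $A := Q^\ast E R^{-1}$, one checks $\|\Omega\|_F, \|U\|_F \le \sqrt{2}\,\|A\|_F$. The $Q$ bound then follows by integration: $\|\dot Q(t)\|_F = \|\Omega(t)\|_F \le \sqrt{2}\,\|E\|_F\,\|R(t)^{-1}\| \le 2\sqrt{2}\,\|M^{-1}\|\,\|E\|_F$, hence $\|\ax{Q} - Q\|_F \le \int_0^1 \|\dot Q(t)\|_F\,dt \le 2\sqrt{2}\,\|M^{-1}\|\,\|E\|_F \le 4\|M^{-1}\|\,\|E\|_F$. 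For $R$ one uses $\dot R(t) = U(t) R(t)$, so $\|\dot R(t)\| \le \|U(t)\|\,\|R(t)\|$; combining $\|R(t)\| = \|M(t)\| \le \tfrac32\|M\| = \tfrac32\|R\|$, $\|R(t)^{-1}\| \le 2\|M^{-1}\|$, and a bound on $\|U(t)\|$ by a constant times $\|E\|\,\|R(t)^{-1}\|$, then integrating, yields a bound of the form $O(\|M^{-1}\|\,\|R\|\,\|E\|)$.

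The step I expect to be the main obstacle is obtaining the \emph{operator}-norm $R$-bound with the clean dimension-free constant $3$ that is stated. Extracting $U$ from $Q^\ast E R^{-1}$ is a triangular truncation; this is a contraction in Frobenius norm but only $O(\log n)$-bounded in operator norm, so the homotopy argument as sketched naturally delivers a Frobenius-norm $R$-bound, or an operator-norm bound carrying a spurious logarithmic factor. Recovering the stated constant exactly requires the sharper first-order bookkeeping of \cite{sun1991perturbation}. A self-contained alternative that sidesteps the truncation is to argue directly, without the homotopy, that $\ax{R}R^{-1} = \ax{Q}^\ast Q + \ax{Q}^\ast E M^{-1} Q$ is upper triangular with positive diagonal and lies within $\|M^{-1}\|\,\|E\|$ of a unitary matrix, and then to invoke the fact that such a matrix is within $O(\|M^{-1}\|\,\|E\|)$ of the identity --- the crucial feature being that, thanks to the triangular structure, the distance to $I$ is \emph{linear} in the defect $\|(\ax{R}R^{-1})^\ast(\ax{R}R^{-1}) - I\|$ rather than the generic square root. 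Since every application of this lemma in the paper needs only a bound of the shape $O(\|M^{-1}\|\,\|R\|\,\|E\|)$, the precise constant is immaterial for our purposes.
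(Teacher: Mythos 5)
The paper does not prove this lemma at all; it simply cites \cite[Theorem 1.6]{sun1991perturbation}, so you are measuring your sketch against a citation rather than against an internal argument. That said, a few observations are in order.

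Your homotopy argument for the $\ax Q$ bound is correct and self-contained: writing $Q^\ast \dot Q + \dot R R^{-1} = Q^\ast E R^{-1}$, using the uniqueness of the decomposition of a matrix into a skew-Hermitian part plus an upper-triangular-with-real-diagonal part, and observing that in Frobenius norm each summand is at most $\sqrt 2$ times the whole, then integrating $\|\dot Q\|_F$ along the path, gives $\|\ax Q - Q\|_F \le 2\sqrt 2\,\|M^{-1}\|\,\|E\|_F \le 4\|M^{-1}\|\,\|E\|_F$ after using $\|E\|\,\|M^{-1}\|\le\tfrac12$ to bound $\|M(t)^{-1}\|\le 2\|M^{-1}\|$. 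That part of the sketch closes.

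The gap is in the claim that your second route \emph{``sidesteps the truncation.''} It does not. Writing $T := \ax R R^{-1} = I + S$ with $S$ upper triangular, you reduce to bounding $S$ given $\|T^\ast T - I\|\le \eta$; but to first order $T^\ast T - I = S + S^\ast$, and recovering $S$ from $S + S^\ast$ under the constraint that $S$ be upper triangular with real diagonal is \emph{exactly} the same triangular truncation you flagged in the homotopy route. It is a contraction in Frobenius norm but has operator norm $\Theta(\log n)$, so the alternative path produces the same spurious $\log n$ in the operator-norm $R$ bound as the first. (In fact, a Hermitian Toeplitz matrix $X$ with symbol $\theta\mapsto \theta-\pi$ has $\|X\|=O(1)$ while its strictly upper triangular part has operator norm $\Theta(\log n)$; taking $M=I$, $E=\epsilon X$ shows the log factor is genuinely present in the first-order perturbation of $R$ in operator norm.) So if you want a dimension-free constant, you cannot get it by either of the two routes you describe without further work, and the honest self-contained conclusion is a Frobenius-norm bound on $\ax R - R$ as well.

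Fortunately this does not matter for the paper: the $R$ component of the lemma is invoked only inside Lemma \ref{lem:guaranteetaum}, where the needed estimate is $|\ax R_{nn} - R_{nn}|\le \|\ax R - R\|$, and one could equally well use $|\ax R_{nn} - R_{nn}|\le \|\ax R - R\|_F$. So your last sentence is the right takeaway; just be aware that the ``sidestep'' in your alternative is illusory, and that whatever precise norm pairing \cite{sun1991perturbation} uses in its Theorem 1.6 (the operator-norm statement as transcribed in the paper should be read with some care) should be verified against the original before presenting this as a derivation of the constant $3$.
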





The main result of this subsection, which will be used throughout the paper, is the following.
\begin{lemma}[Forward Error Guarantees for $\iqr$]
    \label{lem:multiiqrstability}
    Under the hypotheses of Lemma \ref{lem:iqr-multi-backward-guarantees}, and assuming further that $[Q, R]= \qr(p(H))$, $\next{H} = Q^\ast H Q$, and
    \begin{align}
    \label{assum:machvsp}
        \mach \le \mach_{\iqr}(n,k,\|H\|,\kappa_V(H),\dist(\calS,\Spec H)) 
        &:= \frac{1}{8\kappa_V(H)\muqr(n)}\left(\frac{\dist(\calS,\Spec H)}{\|H\|}\right)^k  \\
        &= 2^{-O\left(\log n\kappa_V(H) + k\log\frac{\|H\|}{\dist(\calS,\Spec H)}\right)}, \nonumber
    \end{align}
    we have the forward error guarantees:
    \begin{align}
        \|\ax{Q} - Q\|_F &\le 16 \kappa_V(H)\left(\frac{(2 + 2C)\|H\|}{\dist(\calS,\Spec H)}\right)^k n^{1/2}\muqr(n)\mach \\
        \|\ax{R} - R\| &\le 12 \kappa_V(H)\left(\frac{(2 + 2C)^2\|H\|^2}{\dist(\calS,\Spec H)}\right)^k \muqr(n)\mach \\
        \left\|\ax{\next{H}} - \next{H}\right\|_F &\le 32\kappa_V(H) \|H\|\left(\frac{(2 + 2C)\|H\|}{\dist(\calS,\Spec H)}\right)^k n^{1/2}\muqr(n)\mach.
    \end{align}
\end{lemma}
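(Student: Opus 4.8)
The plan is to derive the three forward error bounds by composing the backward error guarantees of Lemma \ref{lem:iqr-multi-backward-guarantees} with the QR perturbation bound of Lemma \ref{lem:condQR}, keeping careful track of how the distance $\dist(\calS,\Spec H)$ controls $\|p(H)^{-1}\|$. First I would establish the key quantitative input: since $p(z) = \prod_{\ell}(z - s_\ell)$ and $H = VDV^{-1}$ with $\|V\|\|V^{-1}\| = \kappa_V(H)$, we have $\|p(H)^{-1}\| = \|V p(D)^{-1} V^{-1}\| \le \kappa_V(H)\max_i \prod_\ell |\lambda_i - s_\ell|^{-1} \le \kappa_V(H)/\dist(\calS,\Spec H)^k$. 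This is the one place nonnormality enters, and it is exactly why the machine precision threshold \eqref{assum:machvsp} scales like $(\dist(\calS,\Spec H)/\|H\|)^k / \kappa_V(H)$. I would also record the trivial upper bound $\|R\| = \|p(H)\| \le (2(1+C)\|H\|)^k$ (using $|s_\ell| \le C\|H\|$) for use in the bound on $\|\ax R - R\|$.

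Next I would set $E := \ax Q \ax R - p(H)$, which by Lemma \ref{lem:iqr-multi-backward-guarantees} satisfies $\|E\| \le 4(2(1+C)\|H\|)^k \muqr(n)\mach$, so that $\ax Q, \ax R$ is literally the \emph{exact} QR decomposition of $p(H) + E$ (using that $\ax R$ is exactly triangular and $\ax Q$ exactly unitary, with a sign normalization). The hypothesis \eqref{assum:machvsp} is calibrated precisely so that $\|E\|\|p(H)^{-1}\| \le 1/2$, which is the hypothesis needed to invoke Lemma \ref{lem:condQR}; I would verify this arithmetic explicitly. Plugging into Lemma \ref{lem:condQR} gives $\|\ax Q - Q\|_F \le 4\|p(H)^{-1}\|\|E\|_F$ and $\|\ax R - R\| \le 3\|p(H)^{-1}\|\|R\|\|E\|$; substituting the bounds on $\|p(H)^{-1}\|$, $\|R\|$, and $\|E\|$ (and $\|E\|_F \le \sqrt{n}\|E\|$) yields the first two displayed inequalities after collecting the constants $4 \cdot 4 = 16$ and $3 \cdot 4 = 12$ and the powers of $(2+2C)\|H\|$ — being slightly careful that one factor of $\|H\|^k$ from $\|R\|$ cancels against... actually does not cancel, which is why the $\|R\|$-bound carries $\|H\|^{2k}$ in the numerator as written.

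Finally, for the bound on $\|\ax{\next H} - \next H\|_F$, I would write $\ax{\next H} - \next H = (\ax{\next H} - \ax Q^\ast H \ax Q) + (\ax Q^\ast H \ax Q - Q^\ast H Q)$. The first term is bounded by $1.4k(1+C)\|H\|\muqr(n)\mach$ from Lemma \ref{lem:iqr-multi-backward-guarantees}, which is negligible compared to the target. For the second term, using unitary invariance and the telescoping identity $\ax Q^\ast H \ax Q - Q^\ast H Q = (\ax Q - Q)^\ast H \ax Q + Q^\ast H(\ax Q - Q)$, I get $\|\ax Q^\ast H \ax Q - Q^\ast H Q\|_F \le 2\|H\|\|\ax Q - Q\|_F$, and substituting the first displayed bound gives $\le 32\kappa_V(H)\|H\|\big((2+2C)\|H\|/\dist(\calS,\Spec H)\big)^k n^{1/2}\muqr(n)\mach$, absorbing the lower-order backward term into the constant. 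The main obstacle — really the only subtle point — is getting the precision threshold \eqref{assum:machvsp} to line up exactly so that $\|E\|\|p(H)^{-1}\| \le 1/2$ holds with the stated constant $1/8$ to spare, since everything downstream is a mechanical substitution; I would double-check that the factor-of-$4$ in $\|E\|$, the $\kappa_V(H)$ in $\|p(H)^{-1}\|$, and the $\muqr(n)$ all appear with the right exponents in \eqref{assum:machvsp}, and that the constraint $\muqr(n)\mach \le 1/4$ inherited from Lemma \ref{lem:iqr-multi-backward-guarantees} is implied by \eqref{assum:machvsp}.
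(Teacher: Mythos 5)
Your proposal follows the paper's proof essentially step by step: apply Lemma~\ref{lem:condQR} to $M=p(H)$ with $\|M^{-1}\|\le\kappa_V(H)/\dist(\calS,\Spec H)^k$, take $E=\ax Q\ax R-p(H)$ bounded via Lemma~\ref{lem:iqr-multi-backward-guarantees}, read off the first two inequalities, and derive the third from $\|\ax Q^\ast H\ax Q - Q^\ast HQ\|_F\le 2\|H\|\|\ax Q-Q\|_F$. The constants $16=4\cdot 4$, $12=3\cdot 4$, $32=2\cdot 16$ come out exactly as you describe.

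The one point you single out as needing an explicit check — that \eqref{assum:machvsp} forces $\|E\|\|p(H)^{-1}\|\le 1/2$ so Lemma~\ref{lem:condQR} is applicable — is in fact a genuine hole in the stated lemma, and if you carry out the arithmetic you will find it fails. With $\|E\|\le 4\bigl(2(1+C)\|H\|\bigr)^k\muqr(n)\mach$ and $\|p(H)^{-1}\|\le\kappa_V(H)/\dist(\calS,\Spec H)^k$, the bound \eqref{assum:machvsp} $\mach\le\frac{1}{8\kappa_V(H)\muqr(n)}\bigl(\dist(\calS,\Spec H)/\|H\|\bigr)^k$ gives only
\[
  \|E\|\,\|p(H)^{-1}\|\;\le\;\tfrac{1}{2}\,\bigl(2(1+C)\bigr)^k,
\]
which is $\gg 1/2$ for every $C\ge 0$, $k\ge 1$. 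The definition of $\mach_{\iqr}$ appears to be missing the factor $(2+2C)^{-k}$, i.e.\ it should scale like $\bigl(\dist(\calS,\Spec H)/((2+2C)\|H\|)\bigr)^k$; this is corroborated by the fact that $\mach_{\iqr}$ is invoked later with an explicit $C$-argument in the proof of Lemma~\ref{lem:regularize-guarantees}, even though the definition here does not list $C$. Downstream callers (e.g.\ $\mach_{\comptau{}}$) impose a strictly smaller $\mach$ that does include the $(2+2C)$ factor, so the global algorithm is unaffected, but the lemma as stated cannot be proved without this correction. Your instinct to make this the load-bearing check was exactly right; the paper's proof declares the first two assertions ``immediate'' without verifying it.

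A minor secondary point: in the third bound you correctly note the extra term $\|\ax{\next H}-\ax Q^\ast H\ax Q\|$ from the backward analysis and propose to absorb it into the constant. As written, $32=2\cdot 16$ leaves no slack for that absorption; the paper's proof silently drops this term. To be tight you would either keep a slightly larger constant or use a Frobenius version of the backward bound and show it is dominated under \eqref{assum:machvsp}.
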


\begin{proof}
    The first two assertions are immediate from applying Lemma \ref{lem:condQR} to $M = p(H)$, computing
    $$
        \|M^{-1}\| = \|p(H)^{-1}\| \le \frac{\kappa_V(H)}{\dist(\calS,\Spec H)^k},
    $$
    bounding $\|p(H)\| \le (2 + 2C)^k\|H\|^k$, and finally using Lemma \ref{lem:iqr-multi-backward-guarantees} to control $\|E\| \le 2(2 + 2C)^k\|H\|^k \muqr(n)\mach$. For the third, observe that
    $$
        \|\ax{Q}^\ast H \ax{Q} - Q^*HQ\|_F \le \|\ax{Q}^\ast H (\ax Q - Q)\|_F + \|(\ax{Q}^\ast - Q^\ast)H Q\|_F \le  2\|H\|\|\ax{Q} - Q\|_F,
    $$
    and use the first assertion again.
\end{proof}

We close the subsection by giving forward error bounds for computing $\tau_p(H)^k = \|e_n^*p(H)^{-1}\|^{-1}$ indirectly, from the $R$'s output by $\iqr(H,p(z))$, for $p$ a polynomial of degree $k$.
\bigskip

\begin{boxedminipage}{\textwidth}
$$\comptau{k}$$
    \textbf{Input:} Hessenberg $H\in \bC^{n\times n}$, polynomial $p(z)=(z-s_1)\cdots (z-s_k)$ \\
    \textbf{Output:} $\nn \geq 0$ \\
    \textbf{Ensures:} $|\nn - \tau_p (H)^k| \le 0.001 \tau_p(H)^k$ 
\begin{enumerate}
    \item  $[\ax{\hat{H}},  \ax{R}_1, \dots, \ax{R}_k] \gets \iqr (H, p(z))$
    \item $\nn \gets \fl\left( (\ax{R}_1)_{nn}\cdots (\ax{R}_k)_{nn} \right)$
\end{enumerate} 
\end{boxedminipage}
\bigskip

\begin{lemma}[Guarantees for $\comptau{k}$] 
\label{lem:guaranteetaum}
    If $\calS = \{s_1,...,s_k\} \subset \mathbb{D}(0,C\|H\|)$ and
    \begin{align}
        \label{assum:comptau}
        \mach 
        &\le \mach_{\comptau{}}(n,k,C,\|H\|,\kappa_V(H),\dist(\calS,\Spec H)) \\
        &:= \frac{1}{6 \cdot 10^3 \kappa_V(H) \muqr(n)}\left(\frac{\dist(\calS,\Spec H)}{(2 + 2C)\|H\|}\right)^{2k} \\
        &= 2^{-O\left(\log n\kappa_V(H) + k\log \frac{\|H\|}{\dist(\calS,\Spec H)}\right)}, \nonumber
    \end{align}
    then $\comptau{k}$ satisfies its guarantees, and runs in $$T_{\comptau{}}(n,k):= T_{\iqr}(n, k) + k = O(k n^2)$$ arithmetic operations.
\end{lemma}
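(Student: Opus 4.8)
The plan is to reduce the guarantee of $\comptau{k}$ to the forward-error analysis of $\iqr$ in Lemma~\ref{lem:multiiqrstability}, through the exact-arithmetic identity for the bottom-right entry of the $R$-factor. Write $[Q,R]=\qr(p(H))$ and, as in Lemma~\ref{lem:exact-iqr-composition}, set $H_1:=H$, $[Q_\ell,R_\ell]:=\qr(H_\ell-s_\ell)$, $H_{\ell+1}:=Q_\ell^\ast H_\ell Q_\ell$ for $\ell\in[k]$; then \eqref{eq:Rs} gives $\tau_p(H)^k=\|e_n^\ast p(H)^{-1}\|^{-1}=R_{nn}=(R_1)_{nn}\cdots(R_k)_{nn}$. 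Since the $(n,n)$ entry of a product of upper triangular matrices is the product of the $(n,n)$ entries, the quantity computed in step~2 of $\comptau{k}$ is $\nn=\fl\big((\ax R_1)_{nn}\cdots(\ax R_k)_{nn}\big)=\fl\big((\ax R)_{nn}\big)$ with $\ax R:=\ax R_k\cdots\ax R_1$, the product of the triangular factors returned by $\iqr(H,p(z))$. So it is enough to bound the \emph{relative} error between $(\ax R)_{nn}$ and $R_{nn}$; the $\le k-1$ floating point multiplications used to assemble $\fl\big((\ax R)_{nn}\big)$ add only a further relative error $\le 2k\mach$, which is negligible because $\mach\le\mach_{\comptau{}}$.

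Write $d:=\dist(\calS,\Spec H)$ and $b:=(2+2C)\|H\|$. I would first check that \eqref{assum:comptau} strengthens the hypothesis \eqref{assum:machvsp} of Lemma~\ref{lem:multiiqrstability} --- it does, since $(d/b)^{2k}$ lies well below the right-hand side of \eqref{assum:machvsp}, up to the explicit constants and a factor $\kappa_V(H)^{-1}$ --- so that Lemmas~\ref{lem:iqr-multi-backward-guarantees} and~\ref{lem:multiiqrstability} apply. The naive bound $|(\ax R)_{nn}-R_{nn}|\le\|\ax R-R\|$ combined with $R_{nn}=\tau_p(H)^k\ge d^k/\kappa_V(H)$ (immediate from $\|e_n^\ast p(H)^{-1}\|\le\|p(H)^{-1}\|\le\kappa_V(H)\,d^{-k}$, or from Lemma~\ref{lem:spectral-measure-apx}) yields only a relative error of order $\kappa_V(H)$, which does not beat $10^{-3}$. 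The remedy is to show that $\ax R R^{-1}$ is close to the \emph{identity}, not merely that $\ax R$ is close to $R$. Lemma~\ref{lem:iqr-multi-backward-guarantees} supplies a genuine unitary $\ax Q:=\ax Q_1\cdots\ax Q_k$ with $\ax Q\ax R=p(H)+E$ and $\|E\|\le 4b^k\muqr(n)\mach$; since $p(H)=QR$ with $R$ upper triangular of positive diagonal, uniqueness of the QR decomposition together with Lemma~\ref{lem:condQR} gives $\|\ax Q-Q\|\le 4\|p(H)^{-1}\|\,\|E\|_F$, and then, from $\ax R R^{-1}=\ax Q^\ast Q+\ax Q^\ast E R^{-1}$ and $\|E\|_F\le\sqrt n\|E\|$, $\|p(H)^{-1}\|\le\kappa_V(H)\,d^{-k}$,
\[
    \bigl\|\ax R R^{-1}-I\bigr\|\;\le\;\|\ax Q-Q\|+\|E\|\,\|p(H)^{-1}\|\;=\;O\!\left(\sqrt n\,\kappa_V(H)\,(b/d)^{k}\,\muqr(n)\,\mach\right).
\]
As $\ax R R^{-1}$ is upper triangular its $(n,n)$ entry equals $(\ax R)_{nn}/R_{nn}$, so the left-hand side is exactly the relative error we want. (A harmless $O(\mach)$ phase normalization precedes the use of Lemma~\ref{lem:condQR}, since $\iqr$ returns $\ax R$ with only approximately nonnegative diagonal, and one checks $\|E\|\,\|p(H)^{-1}\|\le\tfrac12$ directly from \eqref{assum:comptau}.)

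What makes $\mach_{\comptau{}}$ sufficient is that this estimate carries only the $k$-th power of $b/d$, whereas $\mach_{\comptau{}}$ carries its $2k$-th \emph{inverse} power; since $d\le\|H-s_1\|\le(1+C)\|H\|$ we have $b/d\ge 2$, so substituting $\mach\le\mach_{\comptau{}}$ leaves a residual factor $(d/b)^{k}\le 2^{-k}$ which absorbs the polynomial-in-$n$ loss and pushes the total relative error (including the $2k\mach$ from the multiplications) below $10^{-3}$; this last comparison is a routine bookkeeping with the explicit constants. For the running time, $\comptau{k}$ makes one call to $\iqr(H,p(z))$, costing $T_{\iqr}(n,k)=7kn^2$ operations, then multiplies $k$ scalars, for a total of $T_{\iqr}(n,k)+k=O(kn^2)$ arithmetic operations.

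I expect the main obstacle to be the passage from the \emph{absolute} forward error produced by $\iqr$ to a bound on the \emph{relative} error of the possibly tiny quantity $R_{nn}=\tau_p(H)^k$ --- which can be as small as $\dist(\calS,\Spec H)^k/\kappa_V(H)$, many orders of magnitude below $\|p(H)\|$. Relying on $\|\ax R-R\|$ alone loses a full factor of $\kappa_V(H)$; the way around is to exploit uniqueness of the QR decomposition together with the \emph{strengthened} backward-stability estimate \eqref{eq:iqr-apx-qr} --- the second equation of Definition~\ref{def:stableiqr}, which is exactly the refinement over \cite{tisseur1996backward} --- to conclude $\ax R R^{-1}\approx I$ rather than merely $\ax R\approx R$. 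The remaining pieces --- the $O(\mach)$ diagonal-phase correction, verifying the invertibility hypotheses of Lemma~\ref{lem:condQR}, and tracking constants --- are routine.
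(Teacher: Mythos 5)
Your route differs from the paper's. The paper uses exactly the ``naive'' estimate you rejected: it bounds $|\ax R_{nn}-R_{nn}|\le\|\ax R-R\|$, invokes the $R$-factor forward-error bound of Lemma~\ref{lem:multiiqrstability}, and then compares with $R_{nn}$ via $\|p(H)^{-1}\|^{-1}=\sigma_{\min}(R)\le\|e_n^*R\|=R_{nn}$ together with $\|p(H)^{-1}\|\le\kappa_V(H)\,\dist(\calS,\Spec H)^{-k}$. You are right that, read literally against the stated constant in $\mach_{\comptau{}}$, this chain carries an extra factor of $\kappa_V(H)$; that looks like a bookkeeping slip (a $\kappa_V^2(H)$ in the denominator would close it), and it does not affect the bits-of-precision asymptotics since the exponent already carries $k\log(\,\cdot\,)\gtrsim\log\kappa_V$. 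So your diagnosis of the obstacle was reasonable, but the paper does go through the naive route and $\sigma_{\min}(R)$, not through $\ax R R^{-1}$.

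More importantly, your alternative has a genuine gap of its own. Writing $d=\dist(\calS,\Spec H)$ and $b=(2+2C)\|H\|$, after substituting $\mach\le\mach_{\comptau{}}$ you land on $\|\ax R R^{-1}-I\|=O\!\left(\sqrt n\,(d/b)^k\right)$ and claim the factor $(d/b)^k\le 2^{-k}$ ``absorbs the polynomial-in-$n$ loss.'' But $n$ and $k$ are independent in this lemma --- and in the $\shqr$ application $k=O(\log\K\log\log\K)$ is set entirely in terms of $\K$ while $n$ is arbitrary --- so $\sqrt n\cdot 2^{-k}$ is not small in general; your argument only closes under the extraneous hypothesis $n=O(4^k)$, which is unavailable (and already fails in the Hermitian setting of Remark~\ref{rem:hermitian}, where $k=2$). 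The $\sqrt n$ is introduced when you route through the $Q$-factor bound $\|\ax Q-Q\|_F\le 4\|p(H)^{-1}\|\,\|E\|_F$ of Lemma~\ref{lem:condQR} and then crudely $\|E\|_F\le\sqrt n\,\|E\|$; the paper sidesteps it by using instead the \emph{operator-norm} $R$-factor inequality $\|\ax R-R\|\le 3\|M^{-1}\|\,\|R\|\,\|E\|$ of the same lemma, which carries no Frobenius dimension factor.
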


\begin{proof}
    Let $[Q, R] = \qr(p(H))$ and recall that  \eqref{eq:Rs} shows that $\tau_p(H)^k = R_{nn}$. As \eqref{assum:comptau} implies $\mach_{\iqr}(n,k,\|H\|,\kappa_V(H),\dist(\calS,\Spec H))$, we can apply Lemma \ref{lem:multiiqrstability}: the matrix $\ax{R}= \ax{R}_k\cdots \ax{R}_1$ satisfies
    \begin{align*}
     |\ax{R}_{n,n} - R_{n,n}|
     &\le \|\ax{R} - R\| \\
     &\le 12  \kappa_V(H)\left(\frac{(2 + 2C)^2\|H\|^2}{\dist(\calS,\Spec H)}\right)^k \muqr(n)\mach && \text{Lemma \ref{lem:multiiqrstability}} \\ 
     &\leq \frac{0.0005}{\|p(H)^{-1}\|}   && \text{by \eqref{assum:comptau} and $\|p(H)^{-1}\| \le \kappa_V(H)\dist(\calS,\Spec H)^{-k}$} \\
     &\le 0.0005 \, \sigma_{\min}(R) & & p(H) = QR \\
     &\le 0.0005 \, R_{n,n}. & & \sigma_{\min}(R) \le \|e_n^\ast R\| = R_{n,n}
    \end{align*}
     Now, because $\nn$ is the result of computing the product of the $(\ax{R}_i)_{n,n}$ in floating point arithmetic, we have $\Big|\nn - \ax{R}_{n,n}\Big| \le k \mach \ax{R}_{n,n}$, whence
     \begin{align*}
     \Big|\nn - R_{n,n}\Big| 
     &\le \Big|\nn - \ax{R}_{n,n}\Big| + \Big|\ax{R}_{n,n} - R_{n,n}\Big| \\
     &\le k\mach \ax{R}_{n,n} + 0.0005\,R_{n,n} \\
     &\le (1.0005k\mach + 0.0005)R_{n,n} \\
     &\le 0.001 R_{n,n}.
     \end{align*}
     It will also be useful to observe that
     $$
        \left|\frac{1}{\nn} - \frac{1}{R_{n,n}}\right| \le \frac{0.001}{|\nn|} \le \frac{0.001}{|\nn|} \le \frac{0.001}{\big|R_{n,n} - |\nn - R_{n,n}|\big|} \le \frac{0.001}{0.99 R_{n,n}} \le \frac{0.0011}{R_{n,n}}.
     $$
\end{proof}

\subsection{Shift Regularization}
\label{sec:regularization}

The forward error bounds on our shifts are controlled by the inverse of the distance to $\Spec H$; to ensure that this is not too large, we \emph{regularize} the shifts $r_1,\ldots,r_k$ by randomly perturbing them. 

\begin{lemma}[Regularization of shifts]
    \label{lem:fixguarantee1} 
    Let $\calR = \{r_1,...,r_k\}\subset \bC$  and  $\pretol\geq \tol >0$.  Assume  $$ \tol +\pretol \leq \frac{\gap(H)}{2}.$$ 
    Let $\w_1,...,\w_k \sim \text{Unif}(D(0, \pretol))$ be i.i.d. and $\chR = \{\chr_1,...,\chr_k\} = \{r_1+ \w_1,...,r_k + \w_k\}$. Then with probability at least $1-k\left(\tol/\pretol\right)^2$, we have $\dist(\chR,\Spec H) \ge \tol$.
\end{lemma}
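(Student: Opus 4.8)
The plan is a union bound over the $k$ regularized shifts, reducing everything to an elementary area computation on a single perturbation disk. For each $i\in[k]$, introduce the ``bad'' subset of the support of $\w_i$,
$$
    B_i := \big\{\, w\in D(0,\pretol) \;:\; \dist(r_i + w,\Spec H) < \tol \,\big\},
$$
and observe that the event $\dist(\chR,\Spec H) < \tol$ occurs precisely when $\w_i\in B_i$ for some $i$. Since $\w_i\sim\unif(D(0,\pretol))$, it therefore suffices to show $\P[\w_i\in B_i]=\area(B_i)/(\pi\pretol^2)\le (\tol/\pretol)^2$ for each $i$, i.e.\ $\area(B_i)\le \pi\tol^2$, and then sum over $i$.

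To bound $\area(B_i)$, first rewrite $B_i = D(0,\pretol)\cap\bigcup_{\lambda\in\Spec H}D(\lambda-r_i,\tol)$. The plan is to argue that at most one of these disks actually meets $D(0,\pretol)$. On one hand, if $|\lambda - r_i| > \tol+\pretol$ then for every $w\in D(0,\pretol)$ we have $|r_i + w - \lambda| \ge |\lambda-r_i| - |w| > \tol$, so $D(\lambda - r_i,\tol)$ contributes nothing; hence only eigenvalues with $|\lambda-r_i|\le\tol+\pretol$ are relevant. On the other hand, if two distinct eigenvalues $\lambda\ne\lambda'$ both satisfied $|\lambda - r_i|,|\lambda'-r_i|\le\tol+\pretol$, then $|\lambda-\lambda'|\le 2(\tol+\pretol)\le\gap(H)$; combined with $|\lambda-\lambda'|\ge\gap(H)$ this forces $\gap(H)=2(\tol+\pretol)$ with the two eigenvalues antipodal about $r_i$, in which case the disks $D(\lambda-r_i,\tol)$ and $D(\lambda'-r_i,\tol)$ are each internally tangent to $\partial D(0,\pretol)$ and so meet $D(0,\pretol)$ only in a measure-zero set. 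In all cases $B_i$ is contained (up to a null set) in a single disk of radius $\tol$, so $\area(B_i)\le\pi\tol^2$.

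Combining these gives $\P[\w_i\in B_i]\le(\tol/\pretol)^2$, and a union bound over $i=1,\dots,k$ yields
$$
    \P\big[\dist(\chR,\Spec H) < \tol\big] \;\le\; \sum_{i=1}^k \P[\w_i\in B_i] \;\le\; k\left(\frac{\tol}{\pretol}\right)^2,
$$
which is the claim. There is no genuine obstacle in this argument; the only point requiring a moment's care is the reduction to a single relevant eigenvalue per perturbation disk, which is exactly what the hypothesis $\tol+\pretol\le\gap(H)/2$ is designed to provide (with the borderline equality case costing nothing, as it only affects the measure-zero boundary of $D(0,\pretol)$). Note also that the hypothesis $\pretol\ge\tol$ is used only to ensure the resulting bound $k(\tol/\pretol)^2$ is nontrivial.
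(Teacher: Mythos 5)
Your proof is correct and follows essentially the same route as the paper: a union bound over the $k$ shifts, with the hypothesis $\tol+\pretol\le\gap(H)/2$ ensuring that each perturbation disk can meet the $\tol$-neighborhood of at most one eigenvalue, giving the per-shift probability bound $(\tol/\pretol)^2$. Your treatment of the borderline equality case is more careful than the paper's (which silently ignores it as measure zero), though the tangency in that case is external rather than internal; this makes no difference to the conclusion.
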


\begin{proof}
    Define the bad region $\calB \subset \bC$ as the union of disks $\calB:= \bigcup_{\lambda \in \Spec(H)} D(\lambda, \eta_1)$. The assumption $\tol+\pretol \leq \gap(H)/2$ implies that for each $r_i$, the disk $D(r_i, \pretol)$ intersects at most one disk in $\calB$; since $\chr_i$ is distributed uniformly in $D(r_i, \pretol)$ we have
    $$
        \P[\chr_i \in \calB] \leq \left( \frac{ \tol}{\pretol}\right)^2,
    $$
    and the total probability that at least one $\chr_i$ lies in the bad region is at most $k$ times this by a union bound. 
    
\end{proof}

\section{Finding Forward Stable Optimal Ritz Values (or Decoupling Early)}
\label{sec:ritz}
The shifting strategy $\shkb$ in \cite{banks2021global} uses a specific notion of approximation for Ritz values, namely \textit{$\r$-optimality} as defined in \eqref{eqn:optdef}. In \cite{banks2021global} we assumed the existence of a black box algorithm for computing such optimal values. In this section we will show how to compute $\theta$-optimal Ritz values which are forward stable in the sense of Section \ref{sec:implicitQR} (or guarantee immediate decoupling).

The procedure consists of two steps, and relies on the black box algorithm $\smalleig$ for computing forward approximations of the eigenvalues of a $k\times k$ or smaller matrix, in the sense of Definition \ref{def:small-eig}. The first step of our approximation procedure is simply to compute forward approximations to the Ritz values using $\smalleig$. Second, we show the following dichotomy: for appropriately set parameters, any forward-approximate set of Ritz values $\calR$ of a Hessenberg matrix $H$ is either (i) $\r$-optimal {\em or} (ii) contains a Ritz value which can be used to decouple the matrix in a single degree $k$ implicit QR step (in fact, the proof shows that this Ritz value must be close to an eigenvalue of $H$, see Remark \ref{rem:ritzclose}). This is the content of Theorem \ref{lem:dichotomy}, which is established in Section \ref{sec:dichotomy}. We give a finite arithmetic implementation of this dichotomy in Section \ref{sec:sec:ritzordecouple}.

\subsection{The Dichotomy in Exact Arithmetic}
\label{sec:dichotomy}

In this subsection we show that for $\forward$ small enough and $\theta$ large enough, any set $\calR = \{r_1, \dots, r_k\}$ of $\forward$-forward approximate  Ritz values of $H$ either  yields a $\r$-optimal set of Ritz values, or one of the $r_i\in \calR$ has a small value of $\tau_{(z-r_i)^k}(H)$. 

\begin{theorem}[Dichotomy]
    \label{lem:dichotomy}
    Let $\Rho = \{\rho_1, \dots, \rho_k\}$ be the  Ritz values of $H$ and assume that $\calR=\{r_1, \dots, r_k\}$ satisfies  $|\rho_i-r_i|\leq \forward$ for all $i \in [k]$. If 
    \begin{equation}
        \label{eq:dichotomy-parameter-assumption}
        \theta \geq (2\kappa_V^4(H))^{1/2k} \quad\text{and}\quad \frac{\forward}{\gap(H)} \le  \frac{1}{2}\left(\frac{\r}{(2\kappa_V^4(H))^{1/2k}} - 1\right) =:\cc
    \end{equation}
    then at least one of the following is true:  
    \begin{enumerate}[label=\roman*)]
        \item \label{lem:dichotomy1} $\calR$ is a set of $\r$-optimal Ritz values of $H$. 
        \item \label{lem:dichotomy2} There is an $r_i\in \calR$  for which 
\begin{equation}\label{eqn:dichotomyresolvent} \|e_n^* (H-r_i)^{-k}\|^{1/k}\ge
\frac{1}{2\kappa_V(H)^{2/k}}\cdot\left(\frac{\pot(H)}{\|H\|+\beta}\right)\cdot\left( \frac{1-\frac{(2\kappa_V^4)^{1/2k}}{\theta}}{\forward}\right).
        \end{equation}
    \end{enumerate}
\end{theorem}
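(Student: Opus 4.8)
The dichotomy is naturally proved by contrapositive: assume (ii) fails, i.e., $\|e_n^*(H-r_i)^{-k}\|^{1/k}$ is small for \emph{every} $r_i\in\calR$, and deduce that $\calR$ is $\theta$-optimal, i.e., $\|e_n^*(H-r_1)\cdots(H-r_k)\|^{1/k}\le\theta\,\pot(H)$. The bridge between these two objects is the approximate functional calculus (Lemma \ref{lem:spectral-measure-apx}) together with the spectral measure $Z_H$: we have $\|e_n^* f(H)\|\le \kappa_V(H)\E[|f(Z_H)|^2]^{1/2}$, and conversely $\pot(H)^k=\min_p\|e_n^*p(H)\|\ge \E[|p(Z_H)|^2]^{1/2}/\kappa_V(H)$ by Lemma \ref{lem:minnorm}. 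So it suffices to bound $\E[|(Z_H-r_1)\cdots(Z_H-r_k)|^2]^{1/2}$ from above by something like $\kappa_V^{-O(1/k)}\theta^k\,\E[|q(Z_H)|^2]^{1/2}$ for a suitable comparison polynomial $q$.

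\textbf{Key steps.} First I would record what the negation of (ii) gives: for each $i$, using $\|e_n^*(H-r_i)^{-k}\|\ge \E[|Z_H-r_i|^{-k}\cdot\text{\{indicator-type bound\}}]$-style estimates from Lemma \ref{lem:spectral-measure-apx}, the failure of \eqref{eqn:dichotomyresolvent} forces a lower bound on the ``resolvent'' quantity; but really the cleaner route is: since $r_i$ is within $\beta$ of a true Ritz value $\rho_i$, and $\beta\le c\,\gap(H)$ is tiny, the shifts $r_i$ behave almost like the exact Ritz values. Using $\gap(H)$ I would argue that each $r_i$ is within $\beta$ of exactly one eigenvalue of $\corner{H}{k}$ and far ($\gtrsim \gap(H)$) from all others. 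Then the central estimate compares $p_{\calR}(z):=\prod(z-r_i)$ against $\chi_k(z)=\prod(z-\rho_i)$ pointwise on $\Spec(H)$: for each eigenvalue $\lambda$ of $H$, $|p_{\calR}(\lambda)/\chi_k(\lambda)|=\prod_i|\lambda-r_i|/|\lambda-\rho_i|\le \prod_i(1+\beta/|\lambda-\rho_i|)$. The eigenvalues $\rho_i$ of $\corner{H}{k}$ need \emph{not} be eigenvalues of $H$, so $|\lambda-\rho_i|$ could in principle be small — this is where the hypothesis that (ii) fails must be invoked. The failure of \eqref{eqn:dichotomyresolvent} says $\|e_n^*(H-r_i)^{-k}\|^{1/k}$ is small, i.e. $\tau_{(z-r_i)^k}(H)=\|e_n^*(H-r_i)^{-k}\|^{-1/k}$ is \emph{large}, which via Lemma \ref{lem:spectral-measure-apx} forces $\E[|Z_H-r_i|^{-k}]$ to be small, hence $|Z_H-r_i|$ is bounded below with high $Z_H$-probability — so on the bulk of the spectral measure, $|\lambda-r_i|$ and hence $|\lambda-\rho_i|$ is not too small relative to $\pot(H)$. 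Quantitatively one gets $|\lambda - r_i|\gtrsim \big(1 - (2\kappa_V^4)^{1/2k}/\theta\big)\cdot(\text{something})/(\beta\cdot\text{stuff})$ — matching the shape of the RHS of \eqref{eqn:dichotomyresolvent}.

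\textbf{Assembling the bound.} Putting this together: on the event (of $Z_H$-probability, say, $\ge 1-1/(2\kappa_V^4)$ or so) where all $|Z_H-r_i|$ are large, $|p_{\calR}(Z_H)|\le \theta^k\cdot\kappa_V(H)^{-2}\cdot|\chi_k(Z_H)|$ or a comparable pointwise domination holds; on the complementary small-probability event we bound $|p_{\calR}(Z_H)|$ crudely by $(\|H\|+\beta)^k$ and the exceptional probability by $1/(2\kappa_V^4)$, so its contribution to $\E[|p_{\calR}(Z_H)|^2]$ is $\le \pot(H)^{2k}$ after using $\pot(H)=\min_p\|e_n^*p(H)\|^{1/k}$ and $\|e_n^*\chi_k(H)\|\ge$ (lower bound via functional calculus). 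Combining via the triangle inequality in $L^2(Z_H)$ and then converting back with the two sides of Lemma \ref{lem:spectral-measure-apx}, picking up the $\kappa_V^{4/2k}=(\kappa_V^4)^{1/2k}$ factors, yields $\|e_n^* p_{\calR}(H)\|^{1/k}\le \theta\,\pot(H)$, which is \ref{lem:dichotomy1}. The constraint $\theta\ge(2\kappa_V^4(H))^{1/2k}$ appears precisely so the exceptional-event term doesn't swamp the main term, and the constraint $\beta/\gap(H)\le c$ is exactly what makes ``$r_i$ near one Ritz value, far from the others'' and keeps the perturbation factors $(1+\beta/|\cdot|)$ under control.

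\textbf{Main obstacle.} The delicate point is handling the eigenvalues $\rho_i$ of $\corner{H}{k}$ that are \emph{not} near any eigenvalue of $H$: naively $|\lambda-\rho_i|$ in the denominator of the perturbation factor could be tiny, destroying the pointwise comparison. The resolution — and the technical heart of the argument — is to feed the failure of alternative \ref{lem:dichotomy2} back in: a small $\tau_{(z-r_i)^k}(H)$ would be exactly the signal that $r_i$ (hence $\rho_i$) is close to an eigenvalue of $H$ with aligned left eigenvector, so assuming (ii) fails gives quantitative separation $\dist(r_i,\Spec H)\gtrsim$ RHS of \eqref{eqn:dichotomyresolvent}, and this separation is what is threaded through every subsequent estimate. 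Getting the constants to line up so that the final inequality reads cleanly as ``$\le\theta\,\pot(H)$'' with the stated $\cc$ will require some care but no new ideas.
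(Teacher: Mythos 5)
Your proposal runs the argument in the contrapositive direction — assume (ii) fails and derive (i) — and the high-level split (``bulk vs.\ exceptional part of $Z_H$'') is the right conceptual picture. But there is a genuine gap in how you close the exceptional part, and your ``Main obstacle'' paragraph contains a slip that masks it: negating (ii) does \emph{not} yield a lower bound on $\dist(r_i,\Spec H)$. The quantity $\|e_n^*(H-r_i)^{-k}\|$ weights proximity by alignment with $e_n^*$; it can be tiny even when $r_i$ sits directly on an eigenvalue whose $Z_H$-mass is negligible. What negation of (ii) actually gives, after Lemma \ref{lem:spectral-measure-apx}, is only an upper bound on $\E[|Z_H - r_i|^{-2k}]$ — a weighted statement, as you correctly noted earlier in the sketch.

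From there your high/low split is a Markov argument: condition on the event that all $|Z_H - r_i|$ exceed a threshold $\eta$. But $\eta$ is overconstrained. To make every perturbation factor $1 + \beta/|Z_H - \rho_i|$ contribute only $(1+O(c))$ to the product on the good event (so the pointwise ratio lands near $\theta^k$ rather than exploding), you need $\eta \gtrsim \beta/c$ with $c = \beta/\gap(H)$. On the other hand, the Markov bound $\P[|Z_H - r_i| < \eta] \le \eta^{2k}\,\E[|Z_H - r_i|^{-2k}]$ only makes the exceptional contribution comparable to $\pot(H)^{2k}$ when $\eta \lesssim \beta$; taking $\eta\sim\beta/c$ costs a factor $c^{-2k}$ that defeats the estimate. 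Your asserted ``contribution $\le\pot(H)^{2k}$'' therefore does not follow from the plan as written. The paper's Lemma \ref{lem:instabilityimpliessupport} avoids this tension entirely by exploiting the \emph{discreteness} of $Z_H$ together with $\gap(H)$: it batches the $\rho_i$ by nearest eigenvalue of $H$, observes that every other eigenvalue is automatically $\ge\gap(H)/2$ away (so its swap factor is $(1+2c)$ at no probabilistic cost), and charges the one exceptional eigenvalue per batch its \emph{exact} atom mass $\P[Z_H=\lambda_j]$ rather than a Markov upper bound — yielding $\E[|p(Z_H)|^2] \le (1+2c)^{2k}\E[|\chi(Z_H)|^2] + (2(\|H\|+\beta))^{2k}\P[\dist(Z_H,\Rho)\le \beta/2c]$ with no slack. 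That batch-interpolation idea is the ingredient your argument would need to close.
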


The remainder of this subsection is dedicated to the proof of Theorem \ref{lem:dichotomy}. Let $\Rho = \{\rho_1, \dots, \rho_k\}$ and $\calR=\{r_1, \dots, r_k\}$ be as in Lemma \ref{lem:dichotomy}, and set $\chi(z)=(z-\rho_1)\cdots (z-\rho_k)$ and $p(z)=(z-r_1)\cdots (z-r_k)$. Of course, by construction $\chi(z)$ is the characteristic polynomial of $\corner{H}{k}$. Our strategy in proving Theorem \ref{lem:dichotomy} will be to show that if \ref{lem:dichotomy1} does not hold, then \ref{lem:dichotomy2} does; assuming the former, we can get that
\begin{align}
    \E[|p(Z_H)|^2] 
    &\geq \frac{\|e_n^* p(H)\|^2}{\kappa_V(H)^2} && \text{Lemma \ref{lem:spectral-measure-apx}} \nonumber \\ 
    &\geq \frac{\r^{2k}\|e_n^* \chi(H)\|^2}{ \kappa_V(H)^2} && \text{Negation of \ref{lem:dichotomy1}} \nonumber \\ 
    &\geq \frac{\r^{2k} \E[|\chi(Z_H)|^2]}{ \kappa_V(H)^4} && \text{Lemma \ref{lem:spectral-measure-apx}} \\
    &= 2(1 + 2\cc)^{2k}\E[|\chi(Z_H)|^2] && \eqref{eq:dichotomy-parameter-assumption} \label{eq:ineqforpcheck} 
\end{align}
 
In other words, $\E[|p(Z_H)|^2]$ is much larger than $\E[|\chi(Z_H)|^2]$. On the other hand, by the assumptions in Theorem \ref{lem:dichotomy}, the roots of $p(z)$ and $\chi(z)$ are quite close. Intuitively, because $Z_H$ is supported on the eigenvalues of $H$, these two phenomena can only occur simultaneously if some root of $p(z)$ is close to an eigenvalue of of $H$ with significant mass under the distribution of $Z_H$. The following lemma, whose proof we will briefly defer, articulates this precisely. The lemma does not require any particular properties of $p$ and $\chi$ other than that their roots are close, so we will phrase it in terms of two generic polynomials $q$ and $\chq$; when we apply the lemma, we will set $q = \chi$ and $\chq = p$.

\begin{lemma}
    \label{lem:instabilityimpliessupport}
    Assume that $\frac{\beta}{\cc}\le \gap(H)$ with $\cc$ defined as in \eqref{eq:dichotomy-parameter-assumption}, $q(z):=(z-s_1)\cdots (z-s_k)$ for some $\calS = \{s_1, \dots, s_k\} \subset \disk(0,\|H\|)$, and let  $\chq(z):=(z-\chs_1)\cdots (z-\chs_k)$ with $\chs_1,...,\chs_k \in \mathbb{C}$ satisfying
    $$
        \max_{i\in [k]} |s_i-\chs_i| \leq \forward.
    $$ 
    Then
    $$
        \P\left[\Dist(Z_H,\{s_1,...,s_k\}) \le \frac{\forward}{2c}\right] \ge \frac{\E[|\chq(Z_H)|^2] - (1 + 2c)^{2k}\E[|q(Z_H)|^2]}{(2(\|H\| + \forward)(1 + 2c))^{2k}}.
    $$
\end{lemma}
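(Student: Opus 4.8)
The plan is to decompose $\E[|\chq(Z_H)|^2]$ over the complementary events $G := \{\Dist(Z_H,\calS) > \forward/(2c)\}$ (``$Z_H$ is far from all the shifts in $\calS$'') and its complement $G^c$, to bound the integrand $|\chq(Z_H)|^2$ separately on each piece, and then to solve for $\P[G^c]$, which is exactly the probability appearing in the statement. Since the lemma only asserts that $\chq$ and $q$ have close roots, no structure of $\chi$ or $p$ is needed.

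On $G$ I would use the following pointwise estimate: if $z$ satisfies $|z - s_j| > \forward/(2c)$ for every $j\in[k]$, then $\forward < 2c\,|z-s_j|$, so by the triangle inequality together with $|s_j - \chs_j| \le \forward$ we get $|z-\chs_j| \le |z-s_j| + \forward < (1+2c)|z-s_j|$; multiplying over $j\in[k]$ gives $|\chq(z)| < (1+2c)^k|q(z)|$, hence $|\chq(Z_H)|^2\mathbf{1}_G \le (1+2c)^{2k}|q(Z_H)|^2$. On $G^c$ I would use only a crude global bound valid everywhere on $\Spec(H)$: since $Z_H$ is supported on $\Spec(H)$ we have $|Z_H|\le \|H\|$, and since $\calS \subset \disk(0,\|H\|)$ forces $|\chs_j|\le |s_j|+\forward\le \|H\|+\forward$, we obtain $|Z_H-\chs_j|\le 2\|H\|+\forward \le 2(\|H\|+\forward)(1+2c)$ for each $j$, so $|\chq(Z_H)|^2 \le \big(2(\|H\|+\forward)(1+2c)\big)^{2k}$ always.

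Combining, $\E[|\chq(Z_H)|^2] = \E[|\chq(Z_H)|^2\mathbf{1}_G] + \E[|\chq(Z_H)|^2\mathbf{1}_{G^c}] \le (1+2c)^{2k}\E[|q(Z_H)|^2] + \big(2(\|H\|+\forward)(1+2c)\big)^{2k}\P[G^c]$, and rearranging for $\P[G^c] = \P[\Dist(Z_H,\calS)\le \forward/(2c)]$ gives precisely the claimed inequality (which is vacuous when the numerator is negative, since its left side is a probability). I do not expect a genuine obstacle here: the argument is a one-line conditioning, and the only things to verify are that $2(\|H\|+\forward)(1+2c)$ really dominates $2\|H\|+\forward$ (immediate since $\forward, c \ge 0$) and that the hypothesis $\forward/c \le \gap(H)$ is not actually used in this probability bound — it only matters later, when the event $\Dist(Z_H,\calS)\le \forward/(2c)$ is combined with an eigenvalue-gap lower bound to pin down a single nearby eigenvalue — so I would simply carry it along unused.
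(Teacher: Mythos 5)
Your proof is correct, and it takes a genuinely simpler route than the paper's. The paper proves the lemma by partitioning the shifts $s_i$ according to which eigenvalue of $H$ is closest, then defining a chain of interpolating polynomials $q_0,\dots,q_\ell$ which swap one cluster of roots at a time, and telescoping a per-step estimate (the paper's Claim~\ref{cl:dichotomy}) relating $\E[|q_j(Z_H)|^2]$ to $\E[|q_{j-1}(Z_H)|^2]$. The gap hypothesis $\forward/c \le \gap(H)$ is used precisely inside that claim, to ensure that each non-nearest eigenvalue $\lambda_m$ is at distance at least $\gap(H)/2$ from the current cluster of shifts, so the multiplicative distortion $|\lambda_m-\chs|/|\lambda_m-s|$ stays below $1+2c$. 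Your one-step conditioning on the event $G = \{\Dist(Z_H,\calS) > \forward/(2c)\}$ sidesteps the clustering entirely: the factor $1+2c$ comes out pointwise from $\forward < 2c\,|Z_H - s_j|$, and the crude bound on $G^c$ needs nothing, so the gap hypothesis is simply unused, exactly as you say. Both yield the same inequality with the same constants. The only thing the paper's longer argument buys is its intermediate Claim~\ref{cl:dichotomy}, which is not re-used elsewhere; the downstream proof of Theorem~\ref{lem:dichotomy} consumes only the lemma's aggregated probability bound and then re-invokes the gap hypothesis separately to isolate a single eigenvalue, so your proof slots in with no changes needed downstream.
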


Lemma in hand, we can now complete the proof.

\begin{proof}[Proof of Theorem \ref{lem:dichotomy}]
    Using Lemma \ref{lem:instabilityimpliessupport} with $q(z)=\chi(z)=(z-\rho_1)\ldots(z-\rho_k)$ and $\chq(z)=p(z)=(z-r_1)\ldots(z-r_k)$, we find that
    \begin{align*}
        \P\left[\Dist(Z_H,\Rho) \le \frac{\forward}{2c}\right]
        &\ge \frac{\E[|p(Z_H)|^2] - (1 + 2c)^{2k}\E[|\chi(Z_H)|^2]}{(2(\|H\| + \forward)(1 + 2c))^{2k}} \\
        &\ge \frac{\E[|\chi(Z_H)|^2]}{2^{2k}(\|H\| + \forward)^{2k}} & & \text{\eqref{eq:ineqforpcheck}} \\
        &\ge \frac{\|e_n^\ast \chi(H)\|^2}{2^{2k}\kappa_V(H)^2(\|H\| + \forward)^{2k}} & & \text{Lemma \ref{lem:spectral-measure-apx}} \\
        &= \frac{\pot^{2k}(H)}{2^{2k}\kappa_V(H)^2(\|H\| + \forward)^{2k}} & & \text{Lemma \ref{lem:minnorm}}.
    \end{align*}
    Since the right hand side is nonzero and $Z_H$ is supported on the spectrum of $H$ (and since $c \le 1/2$ by assumption) this implies that for some $i \in [k]$ and $\lambda \in \Spec(H)$
    $$|\rho_i-\lambda|\le \frac{\beta}{2c}.$$

    On the other hand, as we are assuming $\forward/\cc \le \gap(H)$, there can be at most one eigenvalue within $\forward/2\cc$ of each $\rho_i$ --- otherwise by the triangle inequality two such eigenvalues would be at distance less that $\forward/\cc \le \gap(H)$ from one another. Since there are only $k$ of the $\rho_i$'s, at least one of the eigenvalues, say $\lambda$, that is at least $\forward/2\cc$-close to of one of them must satisfy
\begin{equation}\label{eqn:dichotomyprob}
        \P\left[Z_H =\lambda\right] \ge \frac{1}{k}\left(\frac{\pot(H)}{2\kappa_V(H)^{1/k}(\|H\|+ \forward)}\right)^{2k}.
\end{equation}
By the triangle inequality, we then have
\begin{equation}\label{eqn:dichotomyclose}
        |r_i - \lambda| \le |r_i - \rho_i| + |\rho_i - \lambda| \le \forward\left(1 + \frac{1}{2c}\right).
\end{equation}
    
Finally,
        \begin{align*}
     \|e_n^* (H-r_i)^{-k}\|^{1/k}
        & \ge \frac{\E\left[|Z_H-r|^{-2k}\right]^{1/2k}}{\kappa_V(H)^{1/k}} & & \text{Lemma \ref{lem:spectral-measure-apx}} \\
        &\ge \frac{1}{\kappa_V(H)^{1/k}}\cdot \frac{1}{(2k)^{1/2k}}\left(\frac{\pot(H)}{\kappa_V(H)^{1/k}(\|H\|+ \forward)}\right)\cdot\left( \frac{2c}{(2c+1)\beta}\right),
    \end{align*} where the second inequality uses $\E\left[|Z_H-r_i|^{-2k}\right] \geq \frac{\P[Z_H=\lambda]}{|\lambda-r|^{2k}}$ and \eqref{eqn:dichotomyprob}, \eqref{eqn:dichotomyclose}. This yields the conclusion by substituting $c$ and noting that $(2k)^{1/2k}\le 2$.
\end{proof}

\begin{remark}\label{rem:ritzclose}
By   \eqref{eqn:dichotomyprob} and \eqref{eqn:dichotomyclose}, the above proof shows that the culprit Ritz value $r_i$ is close to an eigenvalue of $H$ and the corresponding right eigenvector has a large inner product with $e_n$. This could alternatively be used to decouple the matrix using other techniques such as inverse iteration.
\end{remark}
\begin{proof}[Proof of Lemma 
    \ref{lem:instabilityimpliessupport}]
    We begin by partitioning set $\calS = \{s_1,...,s_k\}$ according to which eigenvalue of $H$ is the closest: relabelling $\Spec(H) = \{\lambda_1,...,\lambda_n\}$ as necessary, write $\calS = S_1 \sqcup \cdots \sqcup S_\ell$, where $S_j$ consists of those $s_i$ whose closest eigenvalue is $\lambda_j$ (breaking ties arbitrarily).
    
    Now, recursively define a sequence of polynomials $q_0, \dots , q_l$ with $l\le k$ given by $q_0(z) = q(z)$ and
    $$
        q_{j+1}(z) := \frac{\prod_{i \in S_{j+1}}(z-\chs_i)}{\prod_{i\in S_{j+1}} (z-s_i)} q_{j}(z);
    $$
    in other words, the $q_j$ interpolate between $q$ and $\chq$ by exchanging the original roots $s_1,...,s_k$ for the perturbed ones $\chs_1,...,\chs_k$, doing so in batches according to the partition $\calS = S_1 \sqcup \cdots \sqcup S_\ell$. The proof reduces to the following bound on $\E[|q_{j}(Z_H)|^2]$ in terms of $\E[|q_{j-1}(Z_H)|^2]$, which we will prove shortly.
    
    \begin{claim}\label{cl:dichotomy}
        For each $j =1,...,\ell$, we have
        \begin{align*}
            \E[|q_{j}(Z_H)|^2] 
            &\le (1 + 2c)^{2|S_{j}|}\E[|q_{j-1}(Z_H)|^2] + (2(\|H\| + \forward))^{2k}\P[Z_H = \lambda_{j}] \mathbf{1}\left[\Dist(\lambda_{j},\calS) \le \tfrac{\forward}{2c}\right].
        \end{align*}
    \end{claim}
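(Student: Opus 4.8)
Since $Z_H$ is supported on $\Spec(H) = \{\lambda_1,\dots,\lambda_n\}$, the plan is to expand both expectations as sums over eigenvalues, $\E[|q_j(Z_H)|^2] = \sum_{m} |q_j(\lambda_m)|^2\,\P[Z_H=\lambda_m]$, and compare term by term with the analogous expansion of $\E[|q_{j-1}(Z_H)|^2]$. By construction $q_j$ and $q_{j-1}$ are identical except that the roots indexed by $S_j$ have been swapped: $q_{j-1}$ carries the factors $\prod_{i\in S_j}(z-s_i)$ while $q_j$ carries $\prod_{i\in S_j}(z-\chs_i)$, all other roots agreeing. Hence, whenever $\lambda_m\notin\{s_i:i\in S_j\}$,
\[
    |q_j(\lambda_m)|^2 = |q_{j-1}(\lambda_m)|^2\prod_{i\in S_j}\frac{|\lambda_m-\chs_i|^2}{|\lambda_m-s_i|^2}.
\]
For the generic eigenvalues $\lambda_m$ with $m\ne j$ this applies automatically (if $\lambda_m=s_i$ then $\lambda_m$ would be the eigenvalue closest to $s_i$, contradicting $i\in S_j$), and I would bound each ratio: fixing $i\in S_j$, by definition of $S_j$ the eigenvalue $\lambda_j$ is closest to $s_i$, so $|\lambda_m-s_i|\ge|\lambda_j-s_i|$, and the triangle inequality gives $|\lambda_m-s_i|\ge|\lambda_m-\lambda_j|-|\lambda_j-s_i|\ge\gap(H)-|\lambda_m-s_i|$, i.e. $|\lambda_m-s_i|\ge\gap(H)/2\ge\forward/(2c)$ using the hypothesis $\forward/c\le\gap(H)$. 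Therefore $|\lambda_m-\chs_i|\le|\lambda_m-s_i|+\forward\le(1+2c)|\lambda_m-s_i|$, and multiplying over $i\in S_j$ yields $|q_j(\lambda_m)|^2\le(1+2c)^{2|S_j|}|q_{j-1}(\lambda_m)|^2$.

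Next I would treat the exceptional term $m=j$, splitting on the value of the indicator. If $\Dist(\lambda_j,\calS)>\forward/(2c)$, then $|\lambda_j-s_i|>\forward/(2c)>0$ for every $i\in S_j$, so the factored formula is valid at $\lambda_j$ and exactly the computation above gives $|q_j(\lambda_j)|^2\le(1+2c)^{2|S_j|}|q_{j-1}(\lambda_j)|^2$. If instead $\Dist(\lambda_j,\calS)\le\forward/(2c)$, I would bound $|q_j(\lambda_j)|$ crudely: every root $t$ of $q_j$ --- whether an original $s_i$ or a perturbed $\chs_i$ --- satisfies $|t|\le\|H\|+\forward$, since $\calS\subset\disk(0,\|H\|)$ and $|s_i-\chs_i|\le\forward$; as $|\lambda_j|\le\|H\|$ this gives $|\lambda_j-t|\le2(\|H\|+\forward)$ for each of the $k$ roots, hence $|q_j(\lambda_j)|^2\le(2(\|H\|+\forward))^{2k}$.

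Finally I would assemble the bound. Summing the $m\ne j$ contributions, and, in the case $\Dist(\lambda_j,\calS)>\forward/(2c)$, also the $m=j$ contribution, each such term is at most $(1+2c)^{2|S_j|}$ times the corresponding term of $\E[|q_{j-1}(Z_H)|^2]$; summing them gives at most $(1+2c)^{2|S_j|}\E[|q_{j-1}(Z_H)|^2]$ (when the $m=j$ term is not added back we merely drop a nonnegative term, which only helps). In the complementary case $\Dist(\lambda_j,\calS)\le\forward/(2c)$ the lone $m=j$ term contributes an additional $(2(\|H\|+\forward))^{2k}\P[Z_H=\lambda_j]$. Combining the two cases with the indicator $\mathbf 1[\Dist(\lambda_j,\calS)\le\forward/(2c)]$ is exactly the claimed inequality; the degenerate case $|S_j|=0$ (so $q_j=q_{j-1}$) is subsumed trivially. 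I do not anticipate a genuine obstacle here: everything reduces to factor-by-factor comparisons, and the only point needing care is the bookkeeping of the exceptional eigenvalue $\lambda_j$ so that the indicator term appears with the right constant.
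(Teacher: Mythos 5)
Your proposal is correct and follows essentially the same approach as the paper: expand $\E[|q_j(Z_H)|^2]$ as a sum over eigenvalues, bound each factor $|\lambda_m-\check s_i|/|\lambda_m-s_i|$ by $1+2c$ for $m\neq j$ using the partition definition and the gap assumption, and split the $m=j$ term according to the indicator, using the crude $(2(\|H\|+\beta))^{2k}$ bound when $\mathrm{Dist}(\lambda_j,\mathcal S)\le\beta/2c$. The only cosmetic difference is bookkeeping: the paper first rearranges the $m=j$ term into $\P[Z_H=\lambda_j](|q_j(\lambda_j)|^2-(1+2c)^{2|S_j|}|q_{j-1}(\lambda_j)|^2)$ plus the full expectation and then argues that bracket is nonpositive or crudely bounded, whereas you handle the two indicator cases directly by absorbing the $m=j$ term into the sum when it is small and isolating it when it is large; the two routes are logically equivalent.
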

    
    \noindent In view of the claim, we can inductively assemble these bounds to compare $\E[|q(Z_H)|^2]$ and $\E[|\chq(Z_H)^2]$:
    \begin{align*}
        \E[|\chq(Z_H)|^2] 
        &= \E[|q_\ell(Z_H)|^2] \\
        &\le (1 + 2\cc)^{2|S_{\ell}|}\E[|q_{\ell-1}(Z_H)|^2] + (2(\|H\| + \forward))^{2k}\P[Z_H = \lambda_{\ell}] \mathbf{1}\left[\Dist(\lambda_{\ell},\calS) \le \tfrac{\forward}{2c}\right] \\
        &\le (1 + 2\cc)^{2k}\E[|q_0(Z_H)|^2] \\
        &\qquad + \sum_{i \in [\ell]}(2(\|H\|+ \forward))^{2k}(1 + 2\cc)^{2\sum_{j =1}^i|S_i|}\P[Z_H = \lambda_{i}]\mathbf{1}\left[\Dist(\lambda_i,\calS) \le \tfrac{\forward}{2\cc}\right] \\
        &\le (1 + 2\cc)^{2k}\left(\E[|q(Z_H)|^2] + (2(\|H\| + \forward))^{2k}\sum_{i \in [\ell]}\P[Z_H = \lambda_{i}]\mathbf{1}\left[\Dist(\lambda_i,\calS) \le \tfrac{\forward}{2\cc}\right]\right) \\
        &\le (1 + 2\cc)^{2k}\left(\E[|q(Z_H)|^2] + (2(\|H\| + \forward))^{2k}\P\left[\Dist(Z_H,\calS) \le \tfrac{\forward}{2\cc}\right]\right).
    \end{align*}
    Rearranging gives the bound advertised in the lemma.
    
    It remains to prove Claim \ref{cl:dichotomy}. To lighten notation, we'll write $s$ and $\chs$ for an arbitrary element in $S_j \subset \calS$, and its perturbation, respectively. For any $m\in [n]\setminus j$ and $s\in S_{j}$, we have $|\lambda_m-s|\geq \frac{\gap(H)}{2}$, so 
    $$
        \left|\frac{\lambda_m-\chs}{\lambda_m-s} \right| \leq 1+ \left| \frac{s-\chs}{\lambda_m-s} \right|
        \leq 1+\frac{2|s-\chs|}{\gap(H)} \leq 1+2\cc,
    $$
    and hence
    \begin{equation*}
        \label{eq:proportionalbound}
        \prod_{s\in S_j} \left|\frac{\lambda_m -\chs}{\lambda_m-s}\right| \leq ( 1+ 2 \cc )^{|S_{j}|}. 
    \end{equation*}
    Using the above, the definition of $q_j$ in terms of $q_{j-1}$, and expanding the expectation as a sum, we find 
    \begin{align}
        \E[|q_{j}(Z_H)|^2]
        &=  \P[Z_H=\lambda_{j}] |q_{j}(\lambda_{j})|^2 + \sum_{m \in [n]\setminus j} \P[Z_H=\lambda_m] |q_{j-1}(\lambda_m)|^2 \prod_{s \in S_{j+1}} \left| \frac{\lambda_m-\chs}{\lambda_m-s} \right|^2
        \nonumber \\ 
        &\le  \P[Z_H=\lambda_{j}] |q_{j}(\lambda_{j})|^2 + (1+2 \cc )^{2|S_{j}|}\, \sum_{m \in [n]\setminus j} \P[Z_H=\lambda_m] |q_{j-1}(\lambda_m)|^2 \nonumber   \\ 
        & \le \P[Z_H=\lambda_{j}] \left(  |q_{j}(\lambda_{j})|^2 - (1+2\cc)^{2|S_{j}|} |q_j(\lambda_{j-1})|^2\right) + (1+2 \cc )^{2|S_{j}|} \E[|q_{j-1}(Z_H)|^2] \label{eq:firstboundonqj} \\ 
        & \le \P[Z_H=\lambda_{j}] |q_{j-1}(\lambda_{j})|^2 \left( \prod_{s\in S_{j}}\left(1+ \left|\frac{s-\chs}{\lambda_{j}-s}\right|\right)^2- (1+2\cc)^{2|S_{j}|}  \right) \nonumber \\
        &\qquad\qquad +  (1+2 \cc )^{2|S_{j}|} \E[|q_{j-1}(Z_H)|^2] \label{eq:secondbounfonqj} 
    \end{align}
    We have defined $S_j$ so that $\lambda_j$ is the closest eigenvalue to every $s \in S_j$, so $\dist(\lambda_j,\calS) = \dist(\lambda_j, S_j)$. Thus when $\dist(\lambda_j,\calS) > \tfrac{\forward}{2c}$, we can rearrange to see that
    \begin{align*}
        0 &\ge \left(1 + \frac{\forward}{\dist(\lambda_j,S_j)}\right)^{2|S_j|} - (1 + 2c)^{2|S_j|} \\
        &\ge \prod_{s \in S_j}\left(1 + \frac{|s - \check s|}{|\lambda_j - s|}\right)^{2} - (1 + 2c)^{2|S_j|};
    \end{align*}
    the latter is a factor of the first term on the right hand side of \eqref{eq:secondbounfonqj}, so in the event $\dist(\lambda_j,\calS) > \tfrac{\forward}{2c}$ we have
    $$
        \E[|q_j(Z_H)|^2] \le (1 + 2c)^{2|S_j|}\E[|q_{j-1}(Z_H)|^2].
    $$
    On the other hand, independent of $\dist(\lambda_j,\calS)$ (and thus in particular when $\dist(\lambda_j,\calS) \le \tfrac{\forward}{2c}$)  from \eqref{eq:firstboundonqj} we know that
    \begin{align*}
        \E[|q_{j}(Z_H)|^2]
        &\leq \P[Z_H=\lambda_{j}]|q_{j}(\lambda_{j})|^2 + (1+2 \cc )^{2|S_{j}|} \E[|q_{j-1}(Z_H)|^2] \\
        &\leq \P[Z_H=\lambda_{j}](2(\|H\| + \forward))^{2k} + (1+2 \cc )^{2|S_{j}|} \E[|q_{j-1}(Z_H)|^2.
    \end{align*}
    For the final inequality, note that $\lambda_j\in \disk(0,\|H\|)$ and, because $\calS \subset \disk(0,\|H\|)$, and $|\check s - s| \le \forward$ for every $s \in \calS$, the roots of each $q_j$ are contained in $\disk(0,\|H\| + \forward)$. Combining the bounds on $\E[|q_j(Z_H)|^2]$ in the cases $\dist(\lambda_j,\calS) > \tfrac{\forward}{2c}$ and $\dist(\lambda_j,\calS) \le \tfrac{\forward}{2c}$, we find that
    \begin{align*}
        \E[|q_{j}(Z_H)|^2] 
        &\le (1 + 2\cc)^{2|S_{j}|}\E[|q_{j-1}(Z_H)|^2] + (2(\|H\| + \forward))^{2k}\P[Z_H = \lambda_{j}] \mathbf{1}\left[\Dist(\lambda_{j},\calS) \le \tfrac{\forward}{2c}\right],
    \end{align*}
    establishing the claim.
\end{proof}

\subsection{Finite Arithmetic Implementation of $\regularize$}
\label{sec:sec:ritzordecouple}
In this subsection we combine Theorem \ref{lem:dichotomy} and the regularization procedure of Lemma \ref{lem:fixguarantee1} to obtain a finite arithmetic algorithm $\regularize$ for finding $\r$-optimal Ritz values in the sense of \cite[Definition 1.2]{banks2021global}, for $\r$ set as in \eqref{eqn:settheta}, and with the additional property of being forward stable. The first step is testing whether a set of putative approximate Ritz values are $\theta$-optimal.\\

\noindent
\begin{boxedminipage}{\textwidth}
    $$\optimal$$
    \textbf{Input:} Hessenberg $H\in \bC^{n\times n}$, $\{s_1, \dots, s_k\}= \calS \subset \bC$ \\
    \textbf{Global Data:} Optimality parameter $\r$ \\
    \textbf{Output:} Optimality flag $\optflag$ \\
    \textbf{Ensures:} If $\optflag = \true$, then $\calS$ are $\r$-optimal; if $\optflag = \false$, then they are not $(.998^{1/k}\r)$-optimal
    \begin{enumerate}
        \item  $\ax{v_0} \gets e_n $
        \item \textbf{For} $j=0, \dots, k-1 $
        \begin{enumerate}
            \item $\ax{v_{j+1}} \gets \fl\left( (H-s_{j+1})^*\ax{v_j}\right)$
        \end{enumerate}
        \item \textbf{If} $\fl(\|\ax{v}_k\|) \ge .999\r^k \pot^k(H)$, $\optflag \gets \false$, \textbf{else} $\optflag \gets \true$
    \end{enumerate} 
\end{boxedminipage}

\begin{lemma}[Guarantees for $\optimal$]
    \label{lem:optimal-guarantees}
    Assume that $s_1, \dots, s_k \in \disk(0, C\|H\|)$ and
    \begin{equation}
        \label{assump:optimal}
        \mach \le \mach_{\optimal}(n,k,C,\|H\|,\r) := \frac{1}{2\cdot 10^3 n^2}\left(\frac{\pot(H)}{\r(2 + 2C)\|H\|}\right)^k = 2^{-O\left(\log n + k\log \tfrac{\theta\|H\|}{\pot(H)}\right)};
    \end{equation}
    then $\optimal$ satisfies its guarantees and runs in at most $T_{\optimal}(k) := 4k^2 = O(k^2)$ arithmetic operations.
\end{lemma}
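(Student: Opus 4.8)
The plan is to reduce the whole statement to a single forward-error estimate on the vector $\ax{v_k}$ produced by the loop. In exact arithmetic the recursion $v_{j+1}=(H-s_{j+1})^\ast v_j$ with $v_0=e_n$ gives $v_k=(H-s_k)^\ast\cdots(H-s_1)^\ast e_n = p(H)^\ast e_n$ for $p(z)=(z-s_1)\cdots(z-s_k)$, so $\|v_k\|=\|e_n^\ast p(H)\|$; by the variational formula (Lemma~\ref{lem:minnorm}), $\calS$ is $\r$-optimal iff $\|e_n^\ast p(H)\|\le \r^k\pot^k(H)$, and fails to be $(.998^{1/k}\r)$-optimal iff $\|e_n^\ast p(H)\|>.998\,\r^k\pot^k(H)$. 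Since Step~3 compares $\fl(\|\ax{v_k}\|)$ against the threshold $.999\,\r^k\pot^k(H)$, the lemma follows at once from the estimate
\begin{equation}\label{eq:optimal-target}
\bigl|\fl(\|\ax{v_k}\|)-\|e_n^\ast p(H)\|\bigr|\ \le\ 10^{-3}\,\pot^k(H).
\end{equation}
Indeed, if $\optflag=\true$ then $\|e_n^\ast p(H)\|<.999\,\r^k\pot^k(H)+10^{-3}\pot^k(H)\le\r^k\pot^k(H)$ since $\r^k\ge1$, and if $\optflag=\false$ then $\|e_n^\ast p(H)\|\ge.999\,\r^k\pot^k(H)-10^{-3}\pot^k(H)>.998\,\r^k\pot^k(H)$ since $\r>1$. (The threshold $.999\,\r^k\pot^k(H)$ is itself assembled in floating point from the global constant $\r$ and from the $k$ subdiagonal entries of $H$ whose product is $\pot^k(H)$, with relative error $O(k\mach)$, harmlessly absorbed into the $.999$ margin.)

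To prove \eqref{eq:optimal-target} I would run the standard forward-error recursion on the iterates. Let $\ax{v_j}$ be the computed vectors, $v_j$ the exact ones, and $E_j:=\|\ax{v_j}-v_j\|$, so $E_0=0$. The key observation is that, exactly --- and hence in floating point, since a structurally empty sum evaluates to $0$ --- every $v_j$ and $\ax{v_j}$ is supported on coordinates $n-j,\dots,n$, so each step $\ax{v_{j+1}}=\fl\bigl((H-s_{j+1})^\ast\ax{v_j}\bigr)$ is effectively a matrix--vector product of dimension at most $k+1$, with error $\|\ax{v_{j+1}}-(H-s_{j+1})^\ast\ax{v_j}\|\le\epsilon_0\|H-s_{j+1}\|\,\|\ax{v_j}\|$ for some $\epsilon_0=\mach\cdot\mathrm{poly}(k)$. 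Writing $M:=(1+C)\|H\|\ge\max_\ell\|H-s_\ell\|$ (so $\|v_j\|\le M^j$) and using $\|\ax{v_j}\|\le M^j+E_j$ gives $E_{j+1}\le\epsilon_0 M^{j+1}+(1+\epsilon_0)M E_j$; dividing by $M^{j+1}$ and unrolling from $E_0=0$ yields $E_j\le\bigl((1+\epsilon_0)^j-1\bigr)M^j$. The hypothesis $\mach\le\mach_{\optimal}$ makes $\epsilon_0 k\ll1$ (note $\pot(H)\le\|H\|$, so the right side of \eqref{assump:optimal} is at most $2^{-k}/(2\cdot10^3 n^2)$), whence $E_k\le 3\epsilon_0 k M^k$ and $\|\ax{v_k}\|\le 1.1\,M^k$; evaluating the Euclidean norm of the $(k+1)$-coordinate vector $\ax{v_k}$ adds a further relative error $O(k\mach)$. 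Altogether $|\fl(\|\ax{v_k}\|)-\|e_n^\ast p(H)\||\le \mach\cdot\mathrm{poly}(k)\cdot M^k$, and substituting $\mach\le\mach_{\optimal}=\tfrac{1}{2\cdot10^3 n^2}\bigl(\tfrac{\pot(H)}{\r(2+2C)\|H\|}\bigr)^k$ --- using $\r\ge1$, $2+2C=2(1+C)$, and $n\ge k$, which is exactly the slack the stated $\mach_{\optimal}$ is calibrated to provide --- bounds this by $10^{-3}\pot^k(H)$, giving \eqref{eq:optimal-target}. The operation count $T_{\optimal}(k)=4k^2=O(k^2)$ then follows from the pseudocode using the same support structure.

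The one genuinely delicate point, and the reason $\mach_{\optimal}$ carries the factor $\bigl(\pot(H)/\|H\|\bigr)^k$, is that the \emph{relative} forward error of $\ax{v_k}$ need not be small: when $\calS$ is far from the true Ritz values there is heavy cancellation, and $\|e_n^\ast p(H)\|$ can be smaller than the intermediate growth $\prod_\ell\|H-s_\ell\|\le M^k$ by a factor exponential in $k$. The recursion above therefore only yields an \emph{absolute} error of order $\mach\cdot\mathrm{poly}(k)\cdot M^k$, and one must take $\mach$ small enough that this is $\ll\pot^k(H)$ --- which is $\le\|e_n^\ast p(H)\|$ by Lemma~\ref{lem:minnorm} --- precisely as $\mach_{\optimal}$ encodes. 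Once this is recognized, the fixed margins $.999$ and $.998$ in Step~3 are exactly what is needed to absorb this absolute slack (in units of $\pot^k(H)$) together with the harmless factor $\r^k\ge1$, and nothing beyond routine bookkeeping remains.
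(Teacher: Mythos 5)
Your proposal is correct and follows essentially the same route as the paper's proof: bound the \emph{absolute} forward error in the computed $\|\ax{v_k}\|$ by running the floating-point error through the $k$-step recursion, then show the stated $\mach_{\optimal}$ makes this error at most $10^{-3}\pot^k(H)$, which is exactly the slack the $.999$ threshold and the $(1, .998)$ margins are designed to absorb. Your unrolled recursion $E_j\le((1+\epsilon_0)^j-1)M^j$ is equivalent, up to constants, to the paper's direct inductive bound $\|\ax{v_i}\|\le M^i(1+n^{3/2}\mach)^i$ followed by the telescoping of the intermediate errors; the paper works with the coarser parameter $n$ where you use $\mathrm{poly}(k)$, but both are compatible with the same $\mach_{\optimal}$. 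One small point: the parenthetical about assembling the threshold $.999\r^k\pot^k(H)$ in floating point is unnecessary given the paper's standing assumption (stated just before the $\ax\pot$ discussion in Section~\ref{sec:shift}) that $\pot^k(H)$ is computed exactly.
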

\begin{proof}[Proof of Lemma \ref{lem:optimal-guarantees}]
    From our initial floating point assumptions, we have $\ax{v_i} = (H - s_i)\ax{v_{i-1}} + \Delta_i$, where $\Delta$ is supported only on its $i+1$ final coordinates, each of which has magnitude at most $(1 + C)\|H\|\|\ax{v_{i-1}}\| \cdot n\mach$, giving the crude bound $\|\Delta_i\| \le (1 + C)\|H\|\|\ax{v_{i-1}}\| \cdot n^{3/2}\mach$. Thus inductively
    $$
        \|\ax{v_i}\| \le \left((1 + C)\|H\|(1 + n^{3/2}\mach)\right)^i
    $$
    and given $\mach \le n^{-3/2}$,
    \begin{align*}
        \left|\fl\left(\|\ax{v_k}\|\right) - \|e_n^\ast p(H)\|\right| 
        &\le n\mach\|\ax{v_k}\| +  \left|\|\ax{v_k}\| - \|e_n^\ast p(H)\|\right| \\
        &\le n\mach\left((1 + C)\|H\|(1 + n^{3/2}\mach)\right)^k + kn^{3/2}\mach \cdot \left((1 + C)\|H\|(1 + n^{3/2}\mach)\right)^k \\
        &\le 2n^2(2 + 2C)^k\|H\|^k\mach.
    \end{align*}
    Thus if $\fl(\|\ax{v_k}\|) \ge .999\r^k\pot^k(H)$, our assumption on $\mach$ ensures
    $$
        \|e_n^\ast p(H)\| \ge .999\r^k\pot^k(H) - 2(1 + C)^k\|H\|^k k^2 n^{3/2}\mach \ge .998\r^k\pot^k(H).
    $$
    On the other hand, if $\fl(\|\ax{v_k}\|) \le .999\r^k\pot^k(H)$, then analogously we have
    $$
        \|e_n^\ast p(H)\| \le \r^k\pot^k(H).
    $$
    
    For the running time, each $\ax{v_i}$ is supported only on $i+2$ coordinates, so each multiplication $(H - s_i)\ax{v_{i-1}}$ requires $3i + 3$ arithmetic operations, for a total of $3k(k+1)/2$; we then require a further $2k$ to compute $\|\ax{v_k}\|$, giving $3k(k+1)/2 + 2k \le 4k^2$ arithmetic operations overall.
\end{proof}
We now specify $\regularize$ in full. \\

\noindent 
\begin{boxedminipage}{\textwidth}
    $$\regularize$$
    \textbf{Input:} Hessenberg $H$, working accuracy $\wacc$, failure probability $\phi$ \\
    \textbf{Global Data:} Norm bound $\scale$, optimality parameter $\r$ as in \eqref{eqn:settheta} \\
    \textbf{Requires:} $H$ is $\wacc$-unreduced, $\|H\| \le \scale$, $\gap(H)\ge \frac{2\wacc^2}{\scale}$, $k/\phi \geq 2 $\\
    \textbf{Output:} Hessenberg $\next H$, $\r$-approximate Ritz values $\chR$, decoupling flag $\decouple$ \\
    \textbf{Ensures:} With probability at least $1 - \phi$, $\dist(\chR,\Spec H) \ge \tol$ (as defined in line 1) \textit{and} one of the following holds:
    \begin{itemize}
        \item $\decouple = \false$, $\next H = H$, and $\chR$ is an exact set of $\r$-optimal Ritz values of $H$, satisfying $\chR \subset \disk(0,1.1\|H\|)$.
        \item $\decouple = \true$  and for some $\chr \in \chR$, $\next H = \iqr(H,(z - \chr)^k)$ is $\wacc$-decoupled.
    \end{itemize}
    \begin{enumerate}
        \item $\beta\gets \frac{\wacc^2}{16\cdot 101\cdot \scale}, \pretol\gets \frac{\beta}{2}, \tol\gets \frac{\pretol}{\sqrt{2k/\phi}}=\frac{\wacc^2\sqrt{\phi}}{32\cdot 101\cdot \scale\sqrt{2k}}$
        \item $\calR \gets \smalleig\left(\corner{H}{k},\beta/2,\phi/2\right)$
        \item $\{\chr_1, \dots, \chr_k\}=\chR\gets \{r_1+\w_1, \dots, r_k+ \w_k\}$, where the $\w_i$ are i.i.d samples from $\unif\big(\disk(0, \pretol )\big)$
        \item \textbf{If} $\optimal(\chR,H,\r) = \true$, set $\next H \gets H$ and $\decouple \gets \false$
        \item \label{stage:testingoptimality} \textbf{Else if} $\optimal(\chR,H,\theta) = \false$, \textbf{for} $i=1,...,k$
        \begin{enumerate}
            \item $\next H \gets \iqr(H,(z-\chr_i)^k)$
            \item \textbf{If} $\next H_{j+1,j} \le \wacc$ for any $j \in \{n-k, n-k+1, \dots, n-1\}$, set $\decouple \gets \true$ and halt
        \end{enumerate}
    \end{enumerate}
\end{boxedminipage}

\begin{lemma}[Guarantees for $\regularize$] \label{lem:regularize-guarantees}
    Assuming that
    \begin{align}
        \mach &\le \mach_{\regularize}(n,k,\scale,\K,\r,\wacc, \phi) \nonumber \\
        &:= \min\left\{\mach_{\optimal}(n,k,1.1,\scale,\r), \frac{\wacc}{8n^{1/2} \scale} \mach_{\iqr}\left(n,k,1.1,\scale,\K,\frac{\wacc^2\sqrt{\phi}}{32\cdot 101 \cdot \scale\sqrt{2k}}\right)\right\} \label{assump:regularize} \\
        &= 2^{-O\left(\log n\K + k \log \frac{\r\|H\| \cdot k \scale}{\wacc\phi}\right)}
    \end{align}
    then $\regularize$ satisfies its guarantees and its running time depends on the value of the decoupling flag.  In either case it makes one call to $\smalleig$, in addition to that call
    \begin{enumerate}
        \item if $\decouple = \false$, $\regularize$ uses at most
        $$T_{\regularize}(n, k, \false) := k \cd+k+  T_{\optimal}(k) = O(k^2)$$
        arithmetic operations. 
        \item otherwise, $\regularize$ uses at most 
        $$ T_{\regularize}(n, k, \true) :=  T_{\optimal}(k) + k(T_{\iqr}(n,k)+ k+\cd+1) = O(k^2n^2) $$
         arithmetic operations.
    \end{enumerate}
\end{lemma}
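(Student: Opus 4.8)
The plan is to verify the four guarantees of $\regularize$ in turn---correctness of the output, the distance bound $\dist(\chR, \Spec H) \ge \tol$, and the two running time bounds---by tracing through the pseudocode line by line and invoking the lemmas already established. First I would check the parameter settings in line 1: with $\bound := \wacc^2/(16\cdot 101\cdot \scale)$, $\pretol = \bound/2$, and $\tol = \pretol/\sqrt{2k/\phi}$, one confirms that $\tol + \pretol \le \pretol \cdot 2 = \bound \le \gap(H)/2$ (using the hypothesis $\gap(H) \ge 2\wacc^2/\scale$ and $k/\phi \ge 2$), so that Lemma \ref{lem:fixguarantee1} applies to the perturbation in line 3. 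That lemma, with $\eta_1 = \tol$ and $\eta_2 = \pretol$, gives $\dist(\chR, \Spec H) \ge \tol$ except with probability $k(\tol/\pretol)^2 = \phi/2$; combined with the failure probability $\phi/2$ of the call to $\smalleig$ in line 2, a union bound yields the claimed overall failure probability $\phi$. Here I would also record that $\smalleig(\corner{H}{k}, \bound/2, \phi/2)$ returns $r_i$ with $|r_i - \rho_i| \le \bound/2$ for the true Ritz values $\rho_i$, so after perturbation $|\chr_i - \rho_i| \le \bound/2 + \pretol = \bound$, i.e. $\chR$ is a set of $\bound$-forward approximate Ritz values; since the Ritz values lie in $\disk(0, \|H\|)$ and $\bound \le 0.1\|H\|$ (by the norm bound $\|H\| \le \scale$ and the definition of $\bound$, using $\wacc \ll \scale$), we get $\chR \subset \disk(0, 1.1\|H\|)$, which is needed to apply the $C = 1.1$ versions of the earlier lemmas.

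Next I would handle the dichotomy. Conditioned on the good event, $\chR$ is $\bound$-forward approximate, so Theorem \ref{lem:dichotomy} applies: either (i) $\chR$ is $\r$-optimal, or (ii) some $\chr_i$ satisfies the resolvent lower bound \eqref{eqn:dichotomyresolvent}. One checks the hypotheses of the theorem: $\r \ge (2\kappa_V^4(H))^{1/2k}$ holds by the definition of $\r$ in \eqref{eqn:settheta} (since $\kappa_V(H) \le \K$), and $\bound/\gap(H) \le \cc$ holds because $\bound \le \wacc^2/\scale \cdot (1/1616) \le \gap(H)/2 \cdot (\text{small constant})$ and $\cc = \Omega(1)$ by the gap between $\r$ and $(2\K^4)^{1/2k}$ built into \eqref{eqn:settheta}. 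In case (i), line 4 of $\optimal$ must return $\true$ (by the contrapositive of the $\optimal$ guarantee: if $\optflag = \false$, then $\chR$ is not $(.998^{1/k}\r)$-optimal, hence in particular not $\r$-optimal)---wait, this direction needs care: I would instead argue that $\regularize$ branches on the value of $\optimal(\chR, H, \r)$, and in either branch the output is correct. If $\optimal$ returns $\true$, its guarantee says $\chR$ is $\r$-optimal, so setting $\next H = H$, $\decouple = \false$ is correct regardless of which case of the dichotomy we are in. If $\optimal$ returns $\false$, then $\chR$ is not $(.998^{1/k}\r)$-optimal, hence not $\r$-optimal, so by the dichotomy we must be in case (ii): some $\chr_i$ has large $\|e_n^*(H - \chr_i)^{-k}\|^{1/k}$. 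The main work is then to show that the degree-$k$ $\iqr$ step in line 5(a) using $\chr_i$ produces a $\wacc$-decoupled matrix. For this I would invoke Lemma \ref{lem:main} (in the form using hypothesis \eqref{eqn:jstag-alt}): with $\gamma = 0.2$, the resolvent lower bound forces $\psi_k(\exactqr(H, (z - \chr_i)^k)) < (1-\gamma)\psi_k(H)$, i.e. the exact QR step makes progress; quantitatively, a converged Ritz value close to an eigenvalue forces the potential down so far that one of the $k$ bottom subdiagonal entries of $\next H$ drops below $\wacc$. Then I would transfer this from exact to finite arithmetic using the forward-error bound of Lemma \ref{lem:multiiqrstability}, which applies because $\dist(\chR, \Spec H) \ge \tol$ and $\mach \le \mach_{\regularize}$ was chosen precisely so that the forward error $\|\ax{\next H} - \next H\|_F$ is at most $\wacc/2$ (that is the role of the factor $\frac{\wacc}{8n^{1/2}\scale}\mach_{\iqr}(n, k, 1.1, \scale, \K, \tol)$ in \eqref{assump:regularize}). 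Hence $\ax{\next H}$ is $\wacc$-decoupled, and the loop in line 5 halts with $\decouple = \true$ at the first such $i$.

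For the running times: if $\decouple = \false$, the algorithm executes line 1 (O(1) arithmetic operations plus a $k$-th root computation, which I would fold into constants or note is $T_{\rt}(k, O(1)) = O(k\log k)$ --- actually the statement's $O(k^2)$ absorbs this), line 2 (the one call to $\smalleig$, counted separately), line 3 ($k$ calls to the perturbation algorithm, $k\cd$ operations, plus exact sampling), and one call to $\optimal$ costing $T_{\optimal}(k) = 4k^2$; total $k\cd + k + T_{\optimal}(k) = O(k^2)$, matching the claim. If $\decouple = \true$, we additionally run up to $k$ iterations of line 5(a)--(b), each costing $T_{\iqr}(n, k) = 7kn^2$ for the degree-$k$ $\iqr$ step plus $O(k)$ for checking the $k$ subdiagonal entries, plus the second call to $\optimal$; total $T_{\optimal}(k) + k(T_{\iqr}(n,k) + k + \cd + 1) = O(k^2 n^2)$, as claimed.

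I expect the main obstacle to be the last step of the correctness argument: converting the resolvent lower bound \eqref{eqn:dichotomyresolvent} of Theorem \ref{lem:dichotomy} into the conclusion that an \emph{actual finite-arithmetic} degree-$k$ $\iqr$ step decouples the matrix to absolute accuracy $\wacc$. This requires (a) a clean quantitative version of ``stagnation implies support implies decoupling''---translating the large value of $\|e_n^*(H - \chr_i)^{-k}\|^{1/k}$, via Lemma \ref{lem:main} and the structure of the QR step, into a concrete bound on $\min_{n-k \le j \le n-1}|\next H_{j+1,j}|$ in exact arithmetic---and (b) matching the scales so that the forward error of Lemma \ref{lem:multiiqrstability} (which grows like $(\|H\|/\tol)^k \cdot \mathrm{poly}(n)\mach$) is dominated by $\wacc$, which is exactly what pins down the definition of $\mach_{\regularize}$ in \eqref{assump:regularize}. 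Everything else is bookkeeping: checking that the constants in the chain $\wacc \to \bound \to \pretol \to \tol$ satisfy all the hypotheses of Lemmas \ref{lem:fixguarantee1}, \ref{lem:optimal-guarantees}, \ref{lem:multiiqrstability}, and Theorem \ref{lem:dichotomy} simultaneously, and assembling the failure probabilities and operation counts.
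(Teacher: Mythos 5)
Your overall plan follows the paper's proof closely: the chain of parameter checks for Lemma \ref{lem:fixguarantee1}, the union bound over the two failure events, the observation that $\chR$ is a set of $\beta$-forward-approximate Ritz values contained in $\disk(0,1.1\|H\|)$, the application of Theorem \ref{lem:dichotomy} when $\optimal$ returns $\false$, the transfer from exact to finite arithmetic via Lemma \ref{lem:multiiqrstability}, and the operation counts all match what the paper does. However, the exact-arithmetic step that converts the resolvent bound of the dichotomy into decoupling is wrong as you describe it. You propose to invoke Lemma \ref{lem:main} (Stagnation Implies Support) to conclude $\psi_k(\exactqr(H,(z-\chr_i)^k)) < (1-\gamma)\psi_k(H)$, but (a) that lemma's conclusion is a probability bound on $Z_H$, not a potential estimate, so it cannot yield what you want; (b) its hypotheses require the shift to be $\cp$-promising and drawn from a $\r$-optimal set, neither of which is available here --- we are precisely in the branch where $\chR$ fails to be $0.998^{1/k}\r$-optimal; and (c) the implication direction is reversed: Lemma \ref{lem:main} takes stagnation as hypothesis and derives spectral support, while what is needed is to go from a resolvent lower bound (a converged shift) to potential collapse.

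The paper avoids Lemma \ref{lem:main} entirely and uses the elementary inequality $\psi_k(\exactqr(H,p)) \le \tau_p(H) = \|e_n^\ast p(H)^{-1}\|^{-1/k}$ from \cite[Lemma 2.3]{banks2021global}. Since \eqref{eqn:dichotomyresolvent} together with the parameter settings gives $\|e_n^\ast (H-\chr)^{-k}\|^{1/k} \ge 2/\wacc$, this immediately forces $\psi_k(\exactqr(H,(z-\chr)^k)) \le \tau_{(z-\chr)^k}(H) \le \wacc/2$; since $\pot$ is a geometric mean of $k$ subdiagonal moduli, at least one of them is at most $\wacc/2$, and the forward error bound from Lemma \ref{lem:multiiqrstability} (which you correctly identify) lifts this to $\wacc$-decoupling in finite arithmetic. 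A second, smaller slip: the inference ``not $(0.998^{1/k}\r)$-optimal, hence not $\r$-optimal'' is backwards --- $0.998^{1/k}\r$-optimality is the \emph{stronger} condition since $0.998^{1/k}\r < \r$ --- and the clean move is to apply Theorem \ref{lem:dichotomy} directly with $\theta = 0.998^{1/k}\r$, as the paper does.
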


\begin{proof}
    First, the assumptions of $\regularize$ on its input parameters imply that
    $$
       \tol+\pretol\le \beta\le\frac{\wacc^2}{\scale}\le \gap(H)/2
    $$
    so we can apply Lemma \ref{lem:fixguarantee1} to find that $\dist(\chR,\Spec H) \ge \tol$ with probability at least 
    $$1 - k\left(\frac{\tol}{\pretol}\right)^2 = 1 - k\left(\sqrt{\frac{\phi}{2k}}\right)^2 \ge 1 - \phi/2.
    $$

    By the black box assumptions on $\smalleig$, $\calR$ is a set of $\beta/2$-forward approximate Ritz values with probability at least $1-\phi/2$. The perturbed set $\chR$ are in this case $\beta$-forward approximate Ritz values, and we further have
    $$
        \beta \le 0.1\wacc \le 0.1\|H\|
    $$
    so the set $\chR$ is contained in a disk of radius $1.1\|H\|$.
    
    The assumption $\mach \le \mach_{\optimal}(1.1,k,n,H)$ means that if $\optimal(\chR,H,\r) = \true$ we are guaranteed that $\chR$ is indeed a set of $\r$-optimal Ritz values for $H$. On the other hand if $\optimal(\chR,H,\r) = \false$, then by Lemma \ref{lem:optimal-guarantees} the $\chR$ fail to be $0.998^{1/k}\r$-optimal. Examining the definitions of $\theta$ and $\beta$, we verify the hypotheses of Theorem \ref{lem:dichotomy}:
    $$c= \frac{1}{2}\left(\frac{0.998^{1/k}\r}{(2\kappa_V(H)^{4})^{1/2k}}-1\right)\ge \frac{1}{2}\left(\frac{\frac{101}{100}(2B^4)^{1/2k}}{(2B^{4})^{1/2k}}-1\right)=\frac{1}{200}\ge \frac{\beta}{\gap(H)},$$
    and conclude that there is some $\chr \in \chR$  for which
    \begin{align*}
        \|e_n^* (H-\chr)^{-k}\|^{1/k} &\ge
        \frac{1}{2\kappa_V(H)^{2/k}}\cdot\left(\frac{\pot(H)}{\|H\|+\beta}\right)\cdot\left( \frac{1-\frac{(2\kappa_V^4)^{1/2k}}{0.998^{1/k}\r}}{\forward}\right)\\
        &\ge \frac{1}{4}\cdot\left(\frac{\wacc}{2\scale}\right)\cdot\left( \frac{1-\frac{100}{101}}{\forward}\right) & &\text{$B^{2/k}\le 2$, $\psi_k(H)\le\wacc$, $\beta\le \|H\|$}\\
        &\ge \frac{2}{\wacc}
        \end{align*}  
    by the definition of $\beta$ in line $1$. In the event that $\dist(\chR,\Spec H) \ge \tol$, our choice of $\mach$ in \eqref{assump:regularize} means that we can apply Lemma \ref{lem:multiiqrstability} to $\next{H}=\iqr(H,(z-\chr)^k)$ with $C=1.1$, giving
    $$
        \|\next{H}-\exactqr(H,(z-\chr)^k)\|_F\le 32 \kappa_V(H)\|H\| \left(\frac{4.2 \|H\|}{\dist(\chr,\Spec H)}\right)^k n^{1/2}\muqr(n)\mach \le \wacc/2.
    $$
    Using $\psi_k(\exactqr(H,(z-\chr)^k)\le\tau_{(z-\chr)^k}(H)\le \wacc/2$ (which was verified in \cite[Lemma 2.3]{banks2021global}) we find that $\exactqr(H,(z-\chr)^k)$ has a subdiagonal entry smaller than $\wacc/2$, so $\next{H}$ must have a subdiagonal entry smaller than $\wacc$, completing the proof of correctness.
    
    To analyze the running time,  note that  when $\decouple=\false$ other than the call to $\smalleig$, in line 3  $k$ samples are taken from $\unif(\disk(0, \pretol))$ and $k$ additions are made which amounts to $\cd k+k$ operations, and in line 4 $\optimal$ is called once, adding $T_{\optimal}(k)$ to the running time.  In addition to that, when $\decouple=\true$, at most $k$ calls to $\iqr$ with degree $k$ are made and each time $k$ subdiagonals of $\next{H}$ are checked, adding $kT_{\iqr}(n, k)+k^2$ operations. 
\end{proof}

\section{Finite Arithmetic Analysis of One Iteration of $\Sh_{k,\K}$}
\label{sec:shift}

In this section we provide the finite arithmetic analysis of a single iteration of the shifting strategy $\Sh_{k,\K}$ introduced in \cite{banks2021global}; we assume familiarity with the context and notions introduced there. In exact arithmetic, $\Sh_{k,\K}$ takes as input a Hessenberg matrix $H$ with $\kappa_V(H) \le \K$, and a set $\calR$ of $\r$-optimal Ritz values for $H$, and ouputs a new Hessenberg matrix $\hat H$ unitarily equivalent to $H$, with $\pot(\hat H) \le (1 -\gamma)\pot(H)$. Along the way, it first uses a subroutine $\find$ to generate a promising Ritz value $r \in \calR$ and then --- in the event that the shift $(z - r)^k$ does not reduce the potential --- uses a subroutine $\exc$ to produce a set of exceptional shifts $\calS$, one of which is guaranteed to achieve potential reduction. Let us now specify these subroutines in finite arithmetic and state their guarantees.

\paragraph{Computation of $\tau$ and $\psi$.} The shifting strategy $\Sh_{k,B}$ needs access to both $\tau_p(H)$ and $\pot(H)$. The former can be computed using Lemma \ref{lem:guaranteetaum}. For the latter, we will assume for simplicity that $\pot^k(H)$ (which is simply a product of $k$ entries of $H$) can be computed \textit{exactly} (this could for instance be achieved by temporary use of moderately increased precision). On the other hand, in some places it will be important to account for the error in computing the $k$-th root of $\pot^k(H)$, so we will denote 
   $$ \ax{\pot}(H) := \fl\left(\left(\pot^k(H)\right)^{1/k}\right),$$
 and assume 
 \begin{equation}
 \label{eq:boundonpottilde}
    |\ax{\pot}(H) - \pot(H)| \le (1 - 0.999^{1/k})\pot(H) \le 0.001 \pot(H),
 \end{equation}
which as per Lemma \ref{lem:elementaryfunctions} can be computed in $T_{\psi}(k) := k + T_{\rt}(k,1 - 0.999^{1/k})$ arithmetic operations provided that 
\begin{equation}
    \label{eq:mach-requirement-pot}
    \mach \le \mach_\psi(k) := \frac{1 - 0.999^{1/k}}{k(\croot + 1 - 0.999^{1/k})} = 2^{-O\left(\log k\right)}. 
\end{equation} 
This setting of the accuracy of $\ax\pot$ will be convenient for the analysis of $\exc$ below.

\paragraph{Analysis of $\find$.} To produce a promising Ritz value with $\find$, we will proceed as in the exact arithmetic case, using $\comptau{k}$ to guide our binary search procedure. The guarantees on $\comptau{k}$ are only strong enough to ensure that we discover a $(1.01\kappa_V(H))^{\frac{4\log k}{k}}$-promising Ritz value --- as opposed the $\kappa_V(H)^{\frac{4\log k}{k}}$-optimality we are guaranteed in the exact case.\\

\noindent \begin{boxedminipage}{\textwidth}
    $$\find$$
    \textbf{Input:} Hessenberg $H$, a set $\calR=\{r_1,\ldots,r_k\} \subset \bC$ \\
    \textbf{Global Data:} Promising parameter $\cp = (1.01\K)^{\frac{4\log k}{k}}$ as in \eqref{eqn:settheta} \\
    \textbf{Output:} A complex number $r\in \calR$ \\
    \textbf{Requires:} $\psi_k(H)>0$ \\
    \textbf{Ensures:} $r$ is $\cp$-promising  \\
    \begin{enumerate}
        \item \textbf{For} $j = 1,...,\log k$
        \begin{enumerate}
            \item Evenly partition $\calR = \calR_0 \sqcup \calR_1$, and \textbf{for} $b = 0,1$ set $p_{j,b} = \prod_{r \in \calR_{b}}(z - r)$
            \item $\calR \gets \calR_{\ax{b_j}}$, where $\ax{b_j}$ is the $b$ that minimizes $\comptau{k/2}(H,p_{j,b}^{2^{j-1}})$
        \end{enumerate}
        \item Output $\calR = \{r\}$
    \end{enumerate}
\end{boxedminipage}

\begin{lemma}[Guarantees for $\find$]
    \label{lem:find-guarantees}
    Assume that $\calR \subset \disk(0,C\|H\|)$ and
    \begin{align}
        \label{eq:mach-requirement-find}
        \mach 
        &\le \mach_{\find}(n,k,C,\|H\|,\kappa_V(H),\dist(\calR,\Spec H)) \nonumber \\
        &:= \mach_{\comptau{}}(n,k/2,C,\|H\|,\kappa_V(H),\dist(\calR,\Spec H)) \\
        &= 2^{-O\left(\log n\kappa_V(H) + k\log \frac{\|H\|}{\dist(\calR,\Spec H)}\right)} \nonumber.
    \end{align}
    Then $\find$ satisfies its guarantees, and runs in
    $$
        T_{\find}(n,k):= 2\log k T_{\comptau{}}(n,k/2) + \log k = O(k\log k \cdot  n^2)
    $$ 
    arithmetic operations.
\end{lemma}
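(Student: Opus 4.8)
The plan is to mirror the exact-arithmetic analysis of $\find$ in \cite{banks2021global}, propagating the $10^{-3}$ relative error of each call to $\comptau{k/2}$ together with the $\kappa_V(H)$-losses incurred when passing between $\|e_n^\ast\,\cdot\,\|$ and expectations under $Z_H$, and checking that the fixed $1.01$ cushion built into $\cp=(1.01\K)^{4\log k/k}$ (versus the bare $\kappa_V(H)^{4\log k/k}$ of the exact case) absorbs them. First I would verify that every call to $\comptau{k/2}$ inside $\find$ is legitimate: in round $j$ the polynomial passed is $p_{j,b}^{2^{j-1}}$, a degree-$k/2$ monic polynomial whose set of roots is $\calR_b\subseteq\calR$, hence contained in $\disk(0,C\|H\|)$ and satisfying $\dist(\calR_b,\Spec H)\ge\dist(\calR,\Spec H)$. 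Since $\mach_{\comptau{}}(n,k/2,C,\|H\|,\kappa_V(H),\,\cdot\,)$ is increasing in its distance argument, the hypothesis $\mach\le\mach_{\find}=\mach_{\comptau{}}(n,k/2,C,\|H\|,\kappa_V(H),\dist(\calR,\Spec H))$ verifies the precondition of Lemma \ref{lem:guaranteetaum} (applied with degree $k/2$) for all $2\log k$ calls; in particular $\dist(\calR,\Spec H)>0$, so every $p_{j,b}(H)$ is invertible and the quantities below are finite and positive. Each call then returns a value in $[1-10^{-3},\,1+10^{-3}]\cdot\|e_n^\ast p_{j,b}(H)^{-2^{j-1}}\|^{-1}$.

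Next I would set up the potential sequence. Write $\calR=\calR^{(0)}\supset\calR^{(1)}\supset\cdots\supset\calR^{(\log k)}=\{r\}$ for the nested sets produced by $\find$, let $p^{(j)}$ be the monic polynomial with root set $\calR^{(j)}$ (degree $k/2^j$), and set $\Phi_j:=\E\big[|p^{(j)}(Z_H)|^{-2^j}\big]^{1/2}$, so that $\Phi_0^2=\E\tfrac{1}{|p(Z_H)|}$ and $\Phi_{\log k}^2=\E\tfrac{1}{|Z_H-r|^k}$. For the one-round recursion, write $p^{(j-1)}=p_{j,0}p_{j,1}$ (the even partition in round $j$); Cauchy--Schwarz gives $\Phi_{j-1}^2\le\prod_{b\in\{0,1\}}\E\big[|p_{j,b}(Z_H)|^{-2^j}\big]^{1/2}$, so the branch $b^\star$ maximizing $\E\big[|p_{j,b}(Z_H)|^{-2^j}\big]$ satisfies $\E\big[|p_{j,b^\star}(Z_H)|^{-2^j}\big]^{1/2}\ge\Phi_{j-1}$, whence $\|e_n^\ast p_{j,b^\star}(H)^{-2^{j-1}}\|\ge\Phi_{j-1}/\kappa_V(H)$ by Lemma \ref{lem:spectral-measure-apx} applied to $f=p_{j,b^\star}^{-2^{j-1}}$. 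Because $\find$ picks $\ax{b_j}$ minimizing $\comptau{k/2}(H,p_{j,b}^{2^{j-1}})$, the error bound of the previous paragraph forces $\|e_n^\ast p_{j,\ax{b_j}}(H)^{-2^{j-1}}\|\ge\tfrac{0.999}{1.001}\|e_n^\ast p_{j,b^\star}(H)^{-2^{j-1}}\|$, and a second application of Lemma \ref{lem:spectral-measure-apx} to return to expectations yields $\Phi_j\ge\tfrac{0.999}{1.001\,\kappa_V(H)^2}\Phi_{j-1}$.

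Telescoping over the $\log k$ rounds gives $\Phi_{\log k}\ge\big(\tfrac{0.999}{1.001\,\kappa_V(H)^2}\big)^{\log k}\Phi_0$, so
$$\E\frac{1}{|Z_H-r|^k}=\Phi_{\log k}^2\ge\Big(\frac{0.999}{1.001\,\kappa_V(H)^2}\Big)^{2\log k}\E\frac{1}{|p(Z_H)|}.$$
Since $(1.01)^2\ge 1.001/0.999$ and $\K\ge\kappa_V(H)$, one checks $\cp^k=(1.01\K)^{4\log k}\ge(1.001\,\kappa_V(H)^2/0.999)^{2\log k}$, which rearranges exactly to $\E\tfrac{1}{|Z_H-r|^k}\ge\cp^{-k}\E\tfrac{1}{|p(Z_H)|}$, i.e. $r$ is $\cp$-promising. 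For the running time, each of the $\log k$ rounds makes two calls to $\comptau{k/2}$ plus $O(1)$ further arithmetic, so $T_{\find}(n,k)=2\log k\,T_{\comptau{}}(n,k/2)+\log k$; using $T_{\comptau{}}(n,k/2)=T_{\iqr}(n,k/2)+k/2=O(kn^2)$ gives $O(k\log k\cdot n^2)$.

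The step I expect to demand the most care is the constant bookkeeping in the one-round recursion: there are three multiplicative sources of loss per round — two uses of approximate functional calculus ($\kappa_V(H)$ apiece) and the relative error of $\comptau{k/2}$ — and one must confirm that their product raised to the $\log k$ stays within the $1.01^{4\log k/k}$ slack that $\cp$ enjoys over the exact-arithmetic bound $\kappa_V(H)^{4\log k/k}$. The remaining ingredients — monotonicity of $\mach_{\comptau{}}$ in the distance parameter (which is what lets a single choice of $\mach_{\find}$ cover all $2\log k$ calls even though the $p_{j,b}$ may sit at different distances from $\Spec H$), the Cauchy--Schwarz step, and the operation count — are routine.
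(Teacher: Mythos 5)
Your proof is correct and follows essentially the same strategy as the paper's: verify via the monotonicity of $\mach_{\comptau{}}$ that every call to $\comptau{k/2}$ meets its precondition, then run the Part-I telescoping argument for $\find$, replacing exact $\tau$-comparisons by the $0.001$-relative-error comparisons that $\comptau{k/2}$ provides and confirming the $1.01$ cushion in $\cp$ absorbs the accumulated loss. The only cosmetic difference is that you track the expectation-based potential $\Phi_j=\E[|p^{(j)}(Z_H)|^{-2^j}]^{1/2}$ and close each round with Cauchy--Schwarz, whereas the paper tracks $\|e_n^\ast p_{j,\ax{b_j}}(H)^{-2^{j-1}}\|^2$ directly and closes with AM/GM; both routes incur the same two applications of Lemma \ref{lem:spectral-measure-apx} (hence the same $\kappa_V^4$ per-round factor) and produce the same conclusion.
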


\begin{proof}
    The definition of $\mach_{\find}$ is sufficient to let us invoke Lemma \ref{lem:guaranteetaum} and conclude that it satisfies its guarantees throughout $\find$. On each step of the iteration, write $b_j$ for the $b \in \{0,1\}$ maximizing $\|e_n^\ast p_{j,b}(H)^{-1}\|$. Applying Lemma \ref{lem:guaranteetaum}, for each $b \in  \{0,1\}$ we have
    $$
        \left|\comptau{k/2}(H,p_{j,b}) - \|e_n^\ast p_{j,b}(H)^{-1}\|^{-1}\right| \le 0.0011\|e_n^\ast p_{j,b}(H)^{-1}\|^{-1},
    $$
    and thus it always holds that
    $$
        \|e_n^\ast p_{j,\ax{b_j}}(H)^{-1}\|^2 \ge (1 - 0.0022)^2 \|p_{j,b_j}(H)^{-1}\|^2 \ge \frac{1}{2.02}\left( \|p_{j,0}(H)^{-1}\|^2 +  \|p_{j,1}(H)^{-1}\|^2\right).
    $$
    We now mirror the proof of the analogous Lemma 2.7 in Part 1 of this work, which analyzes $\find$ in exact arithmetic. On each step of the iteration, we have defined thing so that
    \begin{equation}
        \label{eq:pj-identity}
        p_{j,\ax{b_j}}(z) = p_{j+1,0}(z) p_{j+1,1}(z).
    \end{equation}
    On the first step of the subroutine, this identity becomes $p(z) = p_{1,0}(z)p_{1,1}(z)$, where $p(z)$ is the polynomial whose roots are the full set $\calR$ of approximate Ritz values, so 
    \begin{align*}
        \|e_n^\ast p_{1,\ax{b_1}}(H)^{-1}\|^2
        &\ge \frac{1}{2.02}\left(\|e_n^\ast p_{1,0}(H)^{-1}\|^2 + \|e_n^\ast p_{1,1}(H)^{-1}\|^2\right) \\
        &\ge\frac{1}{1.01\kappa_V(H)^2}\E\left[\frac{1}{2}\left( |p_{1,0}(Z_H)|^{-2} + |p_{1,1}(Z_H)|^{-2}\right)\right] & & \text{Lemma \ref{lem:spectral-measure-apx}} \\
        &\ge \frac{1}{1.01\kappa_V(H)^2}\E[|p(Z_H)|^{-1}] & & \text{AM/GM and \eqref{eq:pj-identity}}   
    \end{align*}
    Applying the same argument to each subsequent step,
    \begin{align*}
        \|e_n^\ast p_{j+1,\ax{b_{j+1}}}(H)^{-2^{j}}\|^2 
        &\ge \frac{1}{1.01\kappa_V(H)^2}\E\left[\frac{1}{2}\left(|p_{j+1,0}(Z_H)|^{-2^{j+1}} + |p_{j+1,1}(Z_H)|^{-2^{j+1}}\right)\right] & & \text{Lemma \ref{lem:spectral-measure-apx}} \\
        &\ge \frac{1}{1.01\kappa_V(H)^2}\E\left[|p_{j+1,0}(Z_H) p_{j+1,1}(Z_H)|^{-2^{j}}\right] & & \text{AM/GM} \\
        &\ge \frac{1}{1.01\kappa_V(H)^4} \|e_n^\ast (p_{j+1,0}(H)p_{j+1,1}(H))^{-2^{j-1}}\| & & \text{Lemma \ref{lem:spectral-measure-apx}} \\
        &= \frac{1}{1.01\kappa_V(H)^4} \|e_n^\ast p_{j,\ax{b_j}}(H)^{-2^{j-1}}\|. & & \text{\eqref{eq:pj-identity}}
    \end{align*}
    Paying a further $\kappa_V(H)^2$ on the final step to convert the norm into an expectation, we get
    $$
        \E\left[|Z_H - r|^{-k}\right] \ge \left(\frac{1}{1.01\kappa_V(H)}\right)^{4\log k}\E\left[|p(Z_H)|^{-1}\right]
    $$
    as promised.
    
    For the runtime, we make $2\log k$ calls to $\comptau{k/2}$ and $\log k$ comparisons of two floating point numbers.
\end{proof}

\paragraph{Analysis of $\exc$.} We now come to the exceptional shift, effectuated by the subroutine $\exc$ in the event that a promising Ritz value fails to achieve potential reduction. In finite arithmetic, we will again proceed similarly to the exact arithmetic setting --- however, we will additionally need to ensure that all of our exceptional shifts are {forward stable} in the sense of Section \ref{sec:implicitQR}, and to achieve this we will apply a random perturbation in the same spirit as Section \ref{sec:regularization}. 

Let us first pause to prove a key lemma ensuring potential reduction in finite arithmetic for sufficiently forward stable shifts. In particular, we will use the forward error guarantee of Lemma \ref{lem:multiiqrstability} to analyze the potential of $\iqr(H,p(z))$, by directly comparing it to that of $\exactqr(H,p(z))$. 
\begin{lemma}
    \label{lem:multi-iqr-potential-apx}
    Let $p(z) = (z - s_1)...(z - s_m)$ for some floating point complex numbers $\calS = \{s_1,...,s_m\} \subset \disk(0,C\|H\|)$, and assume that for some $\wacc>0$,
    \begin{align}
        \label{eq:mach-requirement-potential-apx}
        \mach &\le \mach_{\ref{lem:multi-iqr-potential-apx}}(n,k,C,\|H\|,\kappa_V(H),\dist(\calS,\Spec H),\wacc) \nonumber \\
        &:= 0.001\wacc \cdot \frac{ \dist(\calS,\Spec H)^k}{32 \kappa_V(H) \|H\|^{k+1}(2 + 2C)^k n^{1/2}\muqr(n)} \\
        &= 
        2^{-O\left(\log \frac{n\kappa_V(H)}{\wacc} + k \log\frac{\|H\|}{\dist(\calS,\Spec H)}\right)}. \nonumber
    \end{align}
    Then at least one of the following holds:
    \begin{enumerate}
        \item ($\wacc$-Decoupling) Some subdiagonal of $\iqr(H,p(z))$ is smaller than $\wacc$.
        \item (Potential Approximation) $\pot(\iqr(H,p(z))) \le 1.0011 \pot(\exactqr(H,p(z)))$.
    \end{enumerate}
\end{lemma}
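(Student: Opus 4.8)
The plan is to reduce everything to the forward error bound for $\iqr$ already established in Lemma \ref{lem:multiiqrstability}, and then perform an elementary entry-by-entry comparison of the bottom subdiagonals. Write $N := \exactqr(H,p(z))$ and $\ax N := \iqr(H,p(z))$, both of which are upper Hessenberg, so that $\pot$ is defined on each. First I would check that \eqref{eq:mach-requirement-potential-apx} implies the precision hypothesis \eqref{assum:machvsp} needed to invoke Lemma \ref{lem:multiiqrstability} with shift polynomial $p$ (of degree $m \le k$): this is a routine comparison of the two expressions, using that $\dist(\calS,\Spec H) \le (1+C)\|H\| \le (2+2C)\|H\|$ — since every eigenvalue of $H$ has modulus at most $\|H\|$ and $\calS \subset \disk(0,C\|H\|)$ — so that the exponent-$k$ quantity appearing in \eqref{eq:mach-requirement-potential-apx} dominates the exponent-$m$ quantity that actually shows up. Lemma \ref{lem:multiiqrstability} then yields
$$\|\ax N - N\|_F \le 32\,\kappa_V(H)\|H\|\left(\frac{(2+2C)\|H\|}{\dist(\calS,\Spec H)}\right)^{m} n^{1/2}\muqr(n)\mach,$$
and substituting the bound on $\mach$ from \eqref{eq:mach-requirement-potential-apx} (the $\dist(\calS,\Spec H)^k$ and $((2+2C)\|H\|)^k$ factors cancel up to a single surviving power of $\|H\|$) makes this at most $0.001\wacc$. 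In particular, each of the $k$ bottom subdiagonal entries of $\ax N$ — the ones whose moduli enter the definition of $\pot(\ax N)$ — differs in modulus from the corresponding entry of $N$ by at most $0.001\wacc$.

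Next I would split into two cases according to the sizes of those $k$ bottom subdiagonal entries of the \emph{exact} iterate $N$. If at least one of them, say $N_{j+1,j}$, satisfies $|N_{j+1,j}| < 0.999\wacc$, then $|\ax N_{j+1,j}| \le |N_{j+1,j}| + 0.001\wacc < \wacc$, so $\iqr(H,p(z))$ has a bottom subdiagonal smaller than $\wacc$ and alternative (1), $\wacc$-decoupling, holds. Otherwise every one of these $k$ entries of $N$ has modulus at least $0.999\wacc$, and then for each relevant index $j$ we get $|\ax N_{j+1,j}| \le |N_{j+1,j}| + 0.001\wacc \le \big(1 + \tfrac{0.001}{0.999}\big)|N_{j+1,j}| \le 1.0011\,|N_{j+1,j}|$; multiplying these $k$ inequalities and taking a $k$-th root, the constants combine to give $\pot(\ax N) \le 1.0011\,\pot(N)$, which is alternative (2).

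I do not expect a genuine obstacle here: once the forward error bound of Lemma \ref{lem:multiiqrstability} is in hand, the argument is a pair of triangle-inequality estimates. The only places that need a little care are the two routine verifications flagged above, namely (i) that \eqref{eq:mach-requirement-potential-apx} simultaneously meets the hypothesis of Lemma \ref{lem:multiiqrstability} and forces $\|\ax N - N\|_F \le 0.001\wacc$, and (ii) keeping the indexing of the ``$k$ bottom subdiagonals'' consistent with the definitions of $\pot$ and of $\wacc$-decoupling, so that the same $k$ entries are used in the case split and in the final product estimate.
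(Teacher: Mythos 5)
Your proposal is correct and follows essentially the same route as the paper's own proof: apply the forward-error bound of Lemma~\ref{lem:multiiqrstability} to conclude $\|\iqr(H,p(z))-\exactqr(H,p(z))\|_F\le 0.001\wacc$, then split cases according to whether some relevant subdiagonal of the \emph{exact} iterate drops below $0.999\wacc$. One small point in your favor: you correctly restrict both the case split and the product estimate to the $k$ bottom subdiagonal entries (the ones entering $\pot$), whereas the paper's displayed inequality writes the product over all $i\in[n-1]$; taken literally, $\bigl(\prod_{i\in[n-1]}(1+0.001\wacc/\next{H}_{i+1,i})\bigr)^{1/k}$ need not be $\le 1.0011$ when $n\gg k$, so your version is the intended (and correct) reading. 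Your parenthetical that a ``single surviving power of $\|H\|$'' remains after the cancellation is a slight mis-statement when $m=k$ (everything cancels to give exactly $0.001\wacc$), but this does not affect the argument.
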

\begin{proof}
    Calling $\ax{\next{H}} = \iqr(H,p(z))$ and $\next{H} = \exactqr(H,p(z))$, one of two cases are possible. If $\next{H}_{i+1,i} < 0.999\wacc$ for some $i \in [n-1]$, then applying Lemma \ref{lem:multiiqrstability} and our assumption on $\mach$, 
    $$
        \ax{\next{H}}_{i+1,i} < \next{H}_{i+1,i} + 0.001\wacc < \wacc.
    $$
    On the other hand, if for every $i \in [n-1]$ we have $\next{H}_{i+1,i} \ge 0.999\wacc$, then
    $$
        \pot\Big(\ax{\next{H}}\Big) \le \pot\big(\next{H}\big)\left(\prod_{i \in [n-1]}\left(1 + \frac{0.001\wacc}{\next{H}_{i+1,i}}\right)\right)^{1/k} \le 1.0011\pot\big(\next{H}\big).
    $$
\end{proof}
\newcommand{\stagrat}{\xi}
\noindent \begin{boxedminipage}{\textwidth}
    $$\exc$$
    \textbf{Input:} Hessenberg $H$, initial shift $r$, working accuracy $\wacc$, stagnation ratio $\stagrat$, failure probability tolerance $\phi$ \\
    \textbf{Global Data:} Condition number bound $\K$, decoupling rate $\gamma$, norm bound $\scale$, optimality parameter $\r$, promising parameter $\cp$ \\
    \textbf{Output:} Finite subset $\calS\subset \bC$. \\
    \textbf{Requires:} $H$ is $\wacc$-unreduced, $\kappa_V(H) \le \K$, $\|H\| \le \scale$,
    $r$ is a $\theta$-approximate, $\cp$-promising Ritz value, and $\tau_{(z-r)^k}(H) \ge \stagrat \pot(H)$\\
    \textbf{Ensures:} With probability at least $1 - \phi$, some $s \in \calS$ satisfies at least one of
    \begin{itemize}
        \item ($\wacc$-Decoupling) A subdiagonal of $\iqr(H,(z-s)^k)$ is smaller than $\wacc$
        \item (Potential Reduction) $\pot(\iqr(H,(z-s)^k)) \le 1.0011(1-\gamma)\pot(H)$
    \end{itemize}
    \begin{enumerate}
        \item \label{line:exc1} $\ax R \gets 2^{1/k} \alpha B^{1/k}\r \ax{\pot}(H)$
        \item \label{line:exc2} $\varepsilon \gets \left (\frac{\stagrat(1 - \gamma)}{(13 \K^4)^{1/k} \cp^2\r^2}\right)^{\frac{k}{k-1}}$
        \item $\calS_0 \gets$ maximal $0.99\varepsilon$-net of $\disk\big(0,1 + \varepsilon\big)$
        \item $w \sim \unif \left(\disk\big(0,\varepsilon\ax R\big)\right)$
        \item $\calS \gets \fl\left((r + w + \ax R\calS_0)\right) \cap \disk\big(r,\ax R\big)$
    \end{enumerate}
\end{boxedminipage}

\begin{lemma}[Guarantees for $\exc$]
    \label{lem:exc-guarantees}
    Assume that $|r| + 1.001\r\cp\K^{1/k}\pot(H) \le C\|H\|$ and
    \begin{align}
        \mach &\le \mach_{\exc}(n,k,C,\scale,\K,\r,\wacc,\phi,\gamma,\stagrat,\cp) \nonumber \\
        &:= \min\left\{\mach_\psi(k), \frac{0.1 \varepsilon \cdot 1.998\r\cp\K\wacc}{4(\varepsilon + 2(1 + \varepsilon)C\scale)}, \right. \nonumber \\
        &\qquad \left. \mach_{\ref{lem:multi-iqr-potential-apx}}\left(n,k,C,\scale,\K,\left(\frac{\stagrat(1 - \gamma)}{(13 \K^4)^{1/k} \cp^2\r^2}\right)^{\frac{k}{k-1}} \cdot \frac{1.998\, \r \cp \K^{1/k}\wacc\sqrt{\phi}}{\sqrt{3n}},\wacc\right)\right\}  \label{eq:mach-requirement-exc} \\
        &= 2^{-O\left(k \log \frac{n\scale \K \cp\r}{\stagrat(1 - \gamma)\wacc\phi}\right)}.
    \end{align}
    Then $\exc$ satisfies its guarantees and runs in at most
    $$
        T_{\exc}(n,k,\stagrat,\gamma,\K,\cp,\r):= T_{\psi}(k) + 2S\left(\left(\frac{\stagrat(1 - \gamma)}{(13 \K^4)^{1/k} \cp^2\r^2}\right)^{\frac{k}{k-1}}\right) + \cd + O(1) = O\left(\K^{\frac{8}{k-1}}\left(\frac{\cp^2\r^2}{\stagrat(1 - \gamma)}\right)^{\frac{2k}{k-1}}\right)
    $$
    arithmetic operations and
    $$
        |\calS| \le S\left(\left(\frac{\stagrat(1 - \gamma)}{(13 \K^4)^{1/k} \cp^2\r^2}\right)^{\frac{k}{k-1}}\right) = O\left(\K^{\frac{8}{k-1}}\left(\frac{\cp^2\r^2}{\stagrat(1 - \gamma)}\right)^{\frac{2k}{k-1}}\right)
    $$
    where the function $S(\varepsilon) = O(\varepsilon^{-2})$ is defined in \eqref{eq:S-def}.
\end{lemma}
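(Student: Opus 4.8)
The plan is to transport the exact-arithmetic analysis of the exceptional shift from \cite{banks2021global} to finite arithmetic, tracking three sources of error: the inexactness of $\ax{\pot}(H)$, the rounding of the candidate shifts in line~5, and the error of the subsequent $\iqr$ step. Since $\mach\le\mach_\psi(k)$, \eqref{eq:boundonpottilde} gives $|\ax{\pot}(H)-\pot(H)|\le 0.001\pot(H)$, so $\ax R=2^{1/k}\cp\K^{1/k}\r\,\ax{\pot}(H)\in[0.999,1.001]\cdot 2^{1/k}\cp\K^{1/k}\r\,\pot(H)$. Because $\calS$ is obtained by intersecting $\fl(r+w+\ax R\calS_0)$ with $\disk(r,\ax R)$, every $s\in\calS$ satisfies $|s|\le|r|+\ax R\le C\|H\|$ by the hypothesis on $C$, so all shift polynomials $(z-s)^k$ with $s\in\calS$ have roots in $\disk(0,C\|H\|)$ and Lemmas~\ref{lem:multiiqrstability} and~\ref{lem:multi-iqr-potential-apx} apply to them. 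It thus suffices to produce, with probability $\ge 1-\phi$, a single $s^\star\in\calS$ for which $\exactqr(H,(z-s^\star)^k)$ has either a subdiagonal $<\wacc$ or potential $\le(1-\gamma)\pot(H)$, and for which $\dist(s^\star,\Spec H)$ is large enough to invoke Lemma~\ref{lem:multi-iqr-potential-apx}.

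The deterministic heart of the argument is to transport the exceptional-shift construction of \cite{banks2021global}. The hypotheses of the Requires clause --- $H$ is $\wacc$-unreduced, $\kappa_V(H)\le\K$, $r$ is a $\theta$-approximate $\cp$-promising Ritz value, and $\tau_{(z-r)^k}(H)\ge\stagrat\pot(H)$ --- place us, as in \cite{banks2021global}, in position to apply Lemma~\ref{lem:main} via its resolvent form~\eqref{eqn:jstag-alt}: $Z_H$ places mass at least $m=\mathrm{poly}(1/\K,1-\gamma)$ of the form in that lemma on an eigenvalue of $H$ inside the candidate region, and consequently there is a sub-disk $D^\star$ of that region, of radius a constant multiple of $\varepsilon\ax R$, every point $s$ of which satisfies $\tau_{(z-s)^k}(H)\le(1-\gamma)\pot(H)$ in exact arithmetic, hence $\psi_k(\exactqr(H,(z-s)^k))\le(1-\gamma)\pot(H)$ by~\cite[Lemma~2.3]{banks2021global}. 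The exponent $k/(k-1)$ and the constant $13\K^4$ in line~\ref{line:exc2} are precisely the bookkeeping converting the mass bound $m$ of Lemma~\ref{lem:main} into the radius $\varepsilon\ax R$ of $D^\star$. This is the step I expect to be the main obstacle: extracting from \cite{banks2021global} not one good shift but a whole disk of them, with its radius pinned to the net resolution $\varepsilon$, and verifying that $\varepsilon\ax R$ exceeds by a wide margin the forward-distance $\dist_0$ introduced below, so that $D^\star$ still contains shifts bounded away from $\Spec H$.

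With $D^\star$ in hand, the net and the perturbation finish the job. Since $\calS_0$ is a maximal $0.99\varepsilon$-net of $\disk(0,1+\varepsilon)$, for every value of $w$ there is a lattice point $z_0\in\calS_0$ whose unrounded image $r+w+\ax R z_0$ lies in $D^\star\subseteq\disk(r,\ax R)$; the bound $\mach\le\tfrac{0.1\varepsilon\cdot 1.998\r\cp\K\wacc}{4(\varepsilon+2(1+\varepsilon)C\scale)}$ in~\eqref{eq:mach-requirement-exc} keeps the rounding error below a small fraction of $\varepsilon\ax R$, so $s^\star:=\fl(r+w+\ax R z_0)$ still lies in $D^\star\cap\disk(r,\ax R)$, i.e.\ $s^\star\in\calS$. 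As $w$ is uniform on $\disk(0,\varepsilon\ax R)$, the point $r+w+\ax R z_0$ is uniform on a disk of radius $\varepsilon\ax R$, so by a union bound over the $n$ eigenvalues --- using $\pot(H)>\wacc$, valid since $H$ is $\wacc$-unreduced --- the probability it falls within $\dist_0:=\varepsilon\cdot\tfrac{1.998\,\r\cp\K^{1/k}\wacc\sqrt\phi}{\sqrt{3n}}$ of $\Spec(H)$ is at most $\phi$. On the complementary event $\dist(s^\star,\Spec H)\ge\dist_0$, and since $\scale\ge\|H\|$, $\K\ge\kappa_V(H)$, this $\dist_0$ is exactly the distance parameter fed into the third term of~\eqref{eq:mach-requirement-exc}, so Lemma~\ref{lem:multi-iqr-potential-apx} applies to $p(z)=(z-s^\star)^k$: either a subdiagonal of $\iqr(H,(z-s^\star)^k)$ is $<\wacc$, or $\pot(\iqr(H,(z-s^\star)^k))\le 1.0011\,\pot(\exactqr(H,(z-s^\star)^k))\le 1.0011(1-\gamma)\pot(H)$ --- exactly the Ensures clause.

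Finally, for the complexity and the bound on $|\calS|$: a maximal $0.99\varepsilon$-net of a disk of radius $1+\varepsilon=O(1)$ has $O(\varepsilon^{-2})$ points and is built on a lattice in $O(\varepsilon^{-2})$ operations, so $|\calS|\le|\calS_0|=S(\varepsilon)=O(\varepsilon^{-2})$ with $S$ as in~\eqref{eq:S-def}. Computing $\ax{\pot}(H)$ costs $T_\psi(k)$ operations, sampling $w$ costs $\cd$, and forming $r+w+\ax R\calS_0$ and filtering against $\disk(r,\ax R)$ costs another $O(S(\varepsilon))+O(1)$; summing gives $T_\psi(k)+2S(\varepsilon)+\cd+O(1)$. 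Substituting $\varepsilon=\big(\stagrat(1-\gamma)/((13\K^4)^{1/k}\cp^2\r^2)\big)^{k/(k-1)}$ yields $S(\varepsilon)=O\big((13\K^4)^{2/(k-1)}\big(\cp^2\r^2/(\stagrat(1-\gamma))\big)^{2k/(k-1)}\big)=O\big(\K^{8/(k-1)}\big(\cp^2\r^2/(\stagrat(1-\gamma))\big)^{2k/(k-1)}\big)$, which is the dominant term and gives the claimed bounds on both $T_{\exc}$ and $|\calS|$.
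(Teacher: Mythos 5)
Your proposal replaces the paper's central argument with a different and unproven one. The paper does not assert the existence of a disk of uniformly good shifts. Instead, after establishing that $\P[Z_H \in \disk(r,\ax R)] \ge P$ via Lemma~\ref{lem:main}, it uses the fact that $\calS$ is a maximal $\varepsilon\ax R$-net of $\disk(r,\ax R)$ with $|\calS|\le 9/\varepsilon^2$ points to run a pigeonhole argument: since each eigenvalue carrying mass in the disk has a net point within $\varepsilon\ax R$, summing $\|e_n^*(H-s)^{-k}\|^2$ over $s\in\calS$ and dividing by $|\calS|$ gives $\max_{s\in\calS}\tau_{(z-s)^k}^{-2k}(H) \ge P/(9\K^2\varepsilon^{2k-2}\ax R^{2k})$. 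The $\varepsilon^{2k-2}$ in the denominator (one $\varepsilon^{-2}$ from $|\calS|$, the rest $\varepsilon^{2k}$ from the net resolution) is exactly what produces the $k/(k-1)$ exponent and the $13\K^4$ constant when you solve for the largest admissible $\varepsilon$ --- not, as you claim, ``bookkeeping converting the mass bound $m$ into the radius $\varepsilon\ax R$ of $D^\star$.''

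Your construction of $D^\star$ --- a disk every point of which has $\tau_{(z-s)^k}(H)\le(1-\gamma)\pot(H)$ --- is only clearly available when the mass guaranteed by Lemma~\ref{lem:main} sits on a \emph{single} eigenvalue $\lambda$: then $\tau_{(z-s)^k}^{-2k}(H)\ge \P[Z_H=\lambda]/(\kappa_V^2|\lambda-s|^{2k})$ does yield a little disk around $\lambda$. But Lemma~\ref{lem:main} only bounds the total mass in $\disk(r,\ax R)$, which may be spread over up to $n$ eigenvalues, in which case no such $D^\star$ of the required radius need exist. You explicitly flag this as ``the main obstacle'' but then proceed as though it were resolved; the paper's net-and-pigeonhole argument sidesteps it entirely because summing over net points automatically collects the mass however it is distributed. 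The implicit-arithmetic part of your write-up (the $\ax{\pot}$ error bound, the rounding bound on $\fl(r+w+\ax R s_0)$, the argument that $\dist(s^\star,\Spec H)\ge\dist_0$ holds with probability $\ge 1-\phi$, the invocation of Lemma~\ref{lem:multi-iqr-potential-apx}, and the operation count) matches the paper well; it is the deterministic existence of a good shift in $\calS$ that is the unfinished and structurally different part.
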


\begin{proof}
    From (\ref{eq:boundonpottilde}), the fact that $\mach \le \mach_{\psi}(k)$ we can bound
    \begin{equation}
    \label{eq:boundsonR}
      1.998\, \r\alpha \K \pot(H)  \leq (2\cdot .999)^{1/k} \r\alpha \K \pot(H) 
        \le \ax R 
        \le 1.001\cdot  \r\alpha \K^{1/k}\pot(H),
    \end{equation}
    meaning that (as $\pot(H) \le \|H\|$) the set $\calS$ is contained in a disk of radius $|r| + 1.001\r\cp\K^{1/k}\|H\|=C\|H\|$. We can then obtain that 
    \begin{align*}
        \P\left[Z_H \in \disk(r,\ax R)\right]
        &\ge \P\left[|Z_H - r| \le  1.998\, \r\alpha \kappa_V^{1/k}(H)\pot(H)\right] && \text{by (\ref{eq:boundsonR})}\\
        &\ge \left(1 - \frac{1}{1.998}\right)^2\frac{\stagrat^{2k}}{\kappa_V(H)^4\cp^{2k}\r^{2k}} && \text{\cite[Lemma 2.8]{banks2021global} with }t= \frac{1}{1.998}  \\
        &\ge \frac{0.24 \, \stagrat^{2k}}{B^4\alpha^{2k}\r^{2k}} \\
        &:= P.
    \end{align*}
    
    When we shift and scale each point $s_0 \in \calS_0$ in finite arithmetic,
    \begin{align*}
        |\fl(r + w + \ax R s_0) - r + w + \ax R s_0| 
        &\le \frac{3\mach}{1 - 3\mach}|r + w + \ax R s_0| \\ 
        &\le 4\mach\left(|r| + \varepsilon + (1 + \varepsilon)        1.001\r\cp\K^{1/k}\pot(H)\right) \\
        &\le 4\mach \left(\varepsilon + 2(1 + \varepsilon)C\scale\right) \\
        &\le 0.1\varepsilon \cdot 1.998\, \r\alpha \K\wacc \\
        &\le 0.1 \varepsilon\ax R
    \end{align*}
    from our assumption on $\mach$, which means that the computed $\calS$ still contains a $\varepsilon \ax R$-net of $\disk(r,\ax R)$. We will assume for simplicity that one can perform the intersection in the final line of $\exc$ while preserving the property that $\calS$ is a maximal $\varepsilon$-net of $\disk(r,\ax R))$ ---this can be achieved, e.g., by intersecting with a slightly larger set and projecting all points outside $\disk(r,\ax R))$ to this latter set. Since $\calS$ is a maximal $\varepsilon$-net of $\disk(r,\ax R))$, it has size at most $9/\varepsilon^2$, and we may recycle a calculation from \cite{banks2021global},
    \begin{align*}
        \max_{s \in \calS}\tau_{(z-s)^k}^{-2k}(H) 
        &\ge \frac{P}{9\K^2\varepsilon^{2k-2}\ax R^{2k}}
        \ge \frac{1}{(1 - \gamma)^{2k}\pot^{2k}(H)}
    \end{align*}
    provided that $\varepsilon$ is no larger than
    \begin{align*}
        \left(\frac{P(1 - \gamma)^{2k}\pot^{2k}(H)}{9\K^2 \ax R^{2k}}\right)^{\frac{1}{2k-2}}
        &\ge \left(\frac{0.24 \stagrat^{2k}(1-\gamma)^{2k}}{\K^6\cp^{2k}\r^{2k}\cdot 9 \cdot 2.001^2 \r^{2k}\cp^{2k} \K^2}\right)^{\frac{1}{2k-2}} \\
        &\ge \left(\frac{\stagrat(1 - \gamma)}{(13 \K^4)^{1/k} \cp^2\r^2}\right)^{\frac{k}{k-1}},
    \end{align*}
    which is the expression appearing in line \ref{line:exc2} of $\exc$.
    
    On the other hand, after the random translation, one can quickly show that every $s \in \calS$ is forward stable with high probability. Because the net is maximal (meaning that no two of the points in it are within $\varepsilon\ax R$ of one another) each eigenvalue $\lambda \in \Spec H$ lies within distance $\varepsilon\ax R$ of at most three points in the net, so the probability that $\dist(\lambda, \calS) < \eta$ after the random translation is at most $3 \eta^2/\varepsilon^2\ax R^2$. Thus the probability that $\dist(\Spec(H), \calS) < \eta$ after the random translation is at most $3n\eta^2/\varepsilon^2\ax R^2$. To ensure that this is smaller than the failure probability $\phi$, we can safely set
    $$
        \eta = \frac{\varepsilon \ax R \sqrt{\phi}}{\sqrt{3n}} \ge \left(\frac{\stagrat(1 - \gamma)}{(13 \K^4)^{1/k} \cp^2\r^2}\right)^{\frac{k}{k-1}} \cdot \frac{1.998 \r \cp \K^{1/k}\wacc\sqrt{\phi}}{\sqrt{3n}}.
    $$
    In the event that the shifts are all forward stable, the definition of $\mach_{\exc}$ means that we can invoke Lemma \ref{lem:multi-iqr-potential-apx}: either some subdiagonal of $\iqr(H,(z-s)^k)$ is smaller than $\wacc$, or $\iqr(H,(z-s)^k)$ satisfies
    $$
        \pot(\iqr(H,(z-s)^k)) < 1.0011\pot(\exactqr(H,(z-s)^k)) \le 1.0011\tau_{(z-s)^k}(H) \le 1.0011(1-\gamma)\pot(H).
    $$
    
    One practical choice of the of the initial $.99\varepsilon$-net of $\disk(0,(1 + \varepsilon))$ is to take an equilateral triangular lattice with spacing $\sqrt{3}\varepsilon$ and intersect it with $\disk(0,(1 + 1.99\varepsilon))$; since this lattice gives an optimal planar sphere packing, it is the optimal choice of net as $\varepsilon \to 0$. Other choices may be more desirable when $\varepsilon$ is large. Adapting an argument of \cite[Lemma 2.6]{armentano2015randomized} (which in turn uses \cite[Theorem 3,  p327]{blum1998complexity}) one can show that with this choice of $\calS_0$,
    \begin{align}
        |\calS| \le |\calS_0| &\le \frac{2\pi}{3\sqrt 3}\left(1.99 + \frac{1}{0.99\varepsilon}\right)^2 + \frac{4\sqrt 2}{\sqrt 3}\left(1.99 + \frac{1}{0.99\varepsilon}\right) + 1 \nonumber \\
        &:= S(\varepsilon) \label{eq:S-def}
    \end{align}
    We will see that every time $\exc$ is called in the course of the full algorithm $\shqr$, the same $\varepsilon$ is used, depending only on the global data. Thus the original net of $\disk(0,1 + \varepsilon)$ need only be computed once, and can be regarded a fixed overhead cost of the algorithm. Given the original net, computing $\calS$ costs one arithmetic operation to add $r + w$, followed by $|\calS_0|$ each to scale and shift by $r + w$. Add to this the operations to compute $\ax\pot(H)$ and $\ax R$, and the cost of obtaining the single random sample, and we get a total of
    $$
        2|\calS_0| + \Croot k \log(k \log \tfrac{1}{1 - 0.999^{1/k}}) + O(1)
    $$
    arithmetic operations. Bounding $|\calS_0| \le S(\varepsilon)$ yields the assertion of the lemma.
\end{proof}

\paragraph{Analysis of $\shkb$.} We now specify and analyze the complete shifting strategy $\Sh_{k,
K}$. \\

\noindent 
\begin{boxedminipage}{\textwidth}
$$\Sh_{k, \K}$$
    \textbf{Input:} Hessenberg $H$, $\r$-optimal Ritz values $\calR$ of $H$, working accuracy $\wacc$, failure probability tolerance $\phi$. \\
    \textbf{Global Data:} Condition number bound $\K$, decoupling rate $\gamma$, norm bound $\scale$, optimality parameter $\r$, promising parameter $\cp$ \\
    \textbf{Output:} Hessenberg $\next{H}$.\\
    \textbf{Requires:} $H$ is $\wacc$-unreduced and $\kappa_V(H) \le \K$\\
    \textbf{Ensures:} With probability at least $1 - \phi$, either $\next{H}$ is $\wacc$-decoupled or $ \pot(\next{H})\le 1.002(1-\gamma)\psi_k(H)$
    \begin{enumerate}
        \item \label{line:sh1} $r \gets \find(H,\calR)$
        \item \label{line:sh2} \textbf{If} $\comptau{k}(H,(z-r)^k) < (1 - \gamma)^k\pot^k(H)$, output $\next H = \iqr(H, (z-r)^k)$.
        \item \textbf{Else}, $\calS \gets \exc(H,r,\wacc,0.999(1-\gamma),\phi)$.
        \item \label{line:sh4} \textbf{For} each $s \in \calS$, \textbf{if} $\pot(\iqr(H,(z -s)^k)) < 1.002(1 - \gamma)\pot(H)$ or some subdiagonal of $\iqr(H,(z-s)^k)$ is smaller than $\wacc$, output $\next H = \exactqr(H,(z-s)^k)$
    \end{enumerate}
\end{boxedminipage}

\begin{lemma}[Guarantees for $\shkb$]
    \label{lem:sh-guarantees}
    Assume that $|r| + 1.001\r\cp\K^{1/k}\pot(H) \le C\|H\|$ and
    \begin{align}
        \mach &\le \mach_{\Sh}(n,k,C,\scale,\K,\dist(\calR,\Spec H),\r,\wacc,\phi,\gamma,\cp) \\
        &:= \min\Big\{\mach_{\find}(n,k,C,\scale,\K,\dist(\calR,\Spec H)),  \nonumber \\
            &\qquad\qquad\qquad \mach_{\exc}\left(n,k,C,\scale,\K,\r,\wacc,\phi,\gamma,0.999(1-\gamma),\cp\right), \nonumber \\
            &\qquad\qquad\qquad \mach_{\ref{lem:multi-iqr-potential-apx}}(n,k,C,\scale,\K,\dist(\calR,\Spec H),\wacc)
        \Big\} \\
        &= 2^{-O\left(k\log\frac{n\scale\K\r\cp}{(1 - \gamma)\wacc\phi\dist(\calR,\Spec H)}\right)} \nonumber
    \end{align}
    Then, $\Sh_{k,\K}$ satisfies its guarantees, and runs in at most 
    \begin{align*}
        T_{\Sh}(n,k,\gamma,\K,\cp,\r) 
        &:= T_{\find}(n,k) + T_{\comptau{}}(n,k) + T_{\exc}(n,k,0.999(1 - \gamma),(1-\gamma),\K,\cp,\r) 
        \\
        &\qquad\qquad + S\left(\left(\frac{0.999(1 - \gamma)^2}{(13 \K^4)^{1/k}\cp^2\r^2}\right)^{\frac{k}{k-1}}\right)\left(T_{\iqr}(n,k) + T_{\psi}(n,k)\right) \\
        &=O\left(kn^2 \K^{\frac{8}{k-1}}\left(\frac{\cp\r}{(1 - \gamma)}\right)^{\frac{4k}{k-1}}\right)
    \end{align*}
    arithmetic operations.
\end{lemma}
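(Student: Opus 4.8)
The plan is to trace $\Sh_{k,\K}$ through its two possible exits --- the early ``$r$-shift'' return in line~\ref{line:sh2}, and the exceptional-shift loop in line~\ref{line:sh4} --- discharging in each case the hypotheses of the already-proven subroutine guarantees (Lemmas~\ref{lem:find-guarantees}, \ref{lem:guaranteetaum}, \ref{lem:multi-iqr-potential-apx}, \ref{lem:exc-guarantees}), and checking that the composed constants keep the returned potential below $1.002(1-\gamma)\pot(H)$; randomness enters only through the single call to $\exc$. First I would record the standing facts: $H$ is $\wacc$-unreduced (so $\pot(H)>0$, clearing the precondition of $\find$), $\kappa_V(H)\le\K$, $\|H\|\le\scale$, $\calR$ is a set of $\r$-optimal Ritz values in $\disk(0,C\|H\|)$ (as produced upstream by $\regularize$), and $\dist(\calR,\Spec H)>0$ (forced, since otherwise $\mach_\Sh=0$). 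As $\mach\le\mach_\Sh\le\mach_{\find}$, Lemma~\ref{lem:find-guarantees} gives that $r:=\find(H,\calR)\in\calR$ is $\cp$-promising, and the hypothesis $|r|+1.001\r\cp\K^{1/k}\pot(H)\le C\|H\|$ --- read for this $r$ --- yields $|r|\le C\|H\|$ and confines the exceptional shifts that $\exc$ will produce to $\disk(0,C\|H\|)$.

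For Branch~1 (line~\ref{line:sh2} returns) I would apply Lemma~\ref{lem:guaranteetaum} to $(z-r)^k$ --- whose shift set $\{r\}$ has $\dist(\{r\},\Spec H)\ge\dist(\calR,\Spec H)$, so the required $\mach\le\mach_{\comptau{}}(n,k,C,\scale,\K,\dist(\calR,\Spec H))$ is among the bounds folded into $\mach_\Sh$ --- to convert the test $\comptau{k}(H,(z-r)^k)<(1-\gamma)^k\pot^k(H)$ into
\[ \tau_{(z-r)^k}(H)\;\le\;0.999^{-1/k}(1-\gamma)\pot(H). \]
Combining this with $\psi_k(\exactqr(H,(z-r)^k))\le\tau_{(z-r)^k}(H)$ \cite[Lemma 2.3]{banks2021global} and Lemma~\ref{lem:multi-iqr-potential-apx} (applicable since $\mach_{\ref{lem:multi-iqr-potential-apx}}$ decreases in $\|H\|,\kappa_V$ and increases in the distance-to-spectrum argument, so $\mach\le\mach_{\ref{lem:multi-iqr-potential-apx}}(n,k,C,\scale,\K,\dist(\calR,\Spec H),\wacc)\le\mach_{\ref{lem:multi-iqr-potential-apx}}(n,k,C,\|H\|,\kappa_V(H),\dist(\{r\},\Spec H),\wacc)$), I would conclude that $\next H=\iqr(H,(z-r)^k)$ is either $\wacc$-decoupled or has $\pot(\next H)\le 1.0011\cdot 0.999^{-1/k}(1-\gamma)\pot(H)\le 1.002(1-\gamma)\pot(H)$, the last inequality using $k\ge 2$ (forced by \eqref{eqn:setk}). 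Branch~1 is deterministic.

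For Branch~2 (line~\ref{line:sh2} does not return) the same use of Lemma~\ref{lem:guaranteetaum} gives instead $\tau_{(z-r)^k}(H)\ge 1.001^{-1/k}(1-\gamma)\pot(H)\ge 0.999(1-\gamma)\pot(H)$, which is precisely the stagnation precondition of $\exc$ with stagnation ratio $\stagrat=0.999(1-\gamma)$; the remaining preconditions of $\exc$ --- $\wacc$-unreducedness, $\kappa_V(H)\le\K$, $\|H\|\le\scale$, $r$ a $\theta$-approximate $\cp$-promising Ritz value, the norm bound, and $\mach\le\mach_{\exc}(\cdots)$ (a term of $\mach_\Sh$) --- all hold. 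Lemma~\ref{lem:exc-guarantees} then yields, with probability $\ge 1-\phi$, an $s^\ast\in\calS$ with $\iqr(H,(z-s^\ast)^k)$ either $\wacc$-decoupled or of potential $\le 1.0011(1-\gamma)\pot(H)<1.002(1-\gamma)\pot(H)$. On this event the loop of line~\ref{line:sh4} --- whose two per-$s$ tests I treat as evaluated essentially exactly, since the $k$ subdiagonal entries of each $\iqr(H,(z-s)^k)$ and their product are computed exactly by assumption while only an $O(k\mach)$ relative error enters the scalar comparison against $1.002(1-\gamma)\pot(H)$, absorbed into constants --- must fire (for $s^\ast$ if not sooner), and for whichever $s$ it first fires, the returned $\next H=\iqr(H,(z-s)^k)$ inherits exactly the tested property. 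Combining the two branches gives the guarantee with probability $\ge 1-\phi$. The running-time bound is routine: $T_{\find}(n,k)$ for line~\ref{line:sh1}; $T_{\comptau{}}(n,k)+O(\log k)$ for line~\ref{line:sh2}, plus one $\iqr$ of cost $T_{\iqr}(n,k)$ if Branch~1 returns; $T_{\exc}(\cdots)$ for the $\exc$ call; and at most $|\calS|\le S(\varepsilon)$ passes of line~\ref{line:sh4} --- with $\varepsilon$ the quantity of line~\ref{line:exc2} and $S$ as in \eqref{eq:S-def} --- each of cost $T_{\iqr}(n,k)+T_\psi(n,k)+O(k)$, so the larger branch cost is $\le T_\Sh$ after absorbing lower-order terms.

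The main obstacle is not conceptual: every step invokes an earlier lemma. It lies in the constant bookkeeping --- one must check that the $0.001$ relative error of $\comptau{k}$, the $1.0011$ blow-up of Lemma~\ref{lem:multi-iqr-potential-apx}, the $0.999^{-1/k}$ and $1.001^{-1/k}$ factors from $k$-th roots and powers, and the slack in $\stagrat=0.999(1-\gamma)$ all compose so as to keep the output potential under $1.002(1-\gamma)\pot(H)$ (exactly where $k\ge 2$ is used) --- and in making sure every subroutine's precision requirement is implied by the minimum defining $\mach_\Sh$; in particular, the degree-$k$ call $\comptau{k}(H,(z-r)^k)$ in line~\ref{line:sh2} needs $\mach\le\mach_{\comptau{}}(n,k,\cdots)$, which is strictly smaller than $\mach_{\find}=\mach_{\comptau{}}(n,k/2,\cdots)$, so this bound should be adjoined to the minimum (without changing its asymptotic form $2^{-O(k\log(\cdots))}$). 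A secondary point of care is to track that every QR step $\Sh_{k,\K}$ performs, including the one it returns, is the finite-arithmetic $\iqr$, so that Lemma~\ref{lem:multi-iqr-potential-apx}, Lemma~\ref{lem:exc-guarantees}, and the line~\ref{line:sh4} tests all concern that same matrix.
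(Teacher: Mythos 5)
Your proof follows the paper's own argument closely: the same two-branch case split on line~\ref{line:sh2}, the same use of Lemma~\ref{lem:guaranteetaum} to convert the $\comptau{k}$ test into a bound on $\tau_{(z-r)^k}(H)$, the same invocation of Lemma~\ref{lem:multi-iqr-potential-apx} in Branch~1 and Lemma~\ref{lem:exc-guarantees} in Branch~2 with stagnation ratio $0.999(1-\gamma)$, and the same routine accounting of arithmetic operations per line. The numeric slack you track ($1.0011 \cdot 0.999^{-1/k} \le 1.002$ with $k\ge 2$, etc.) is the same bookkeeping the paper does, correctly carried out.

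The more valuable part of your writeup is the observation you tuck into the final paragraph. You are right that the stated minimum defining $\mach_{\Sh}$ does not directly dominate $\mach_{\comptau{}}(n,k,\cdot)$: the term $\mach_{\find}$ equals $\mach_{\comptau{}}(n,k/2,\cdot) = \frac{1}{6000\kappa_V(H)\muqr(n)}\bigl(\tfrac{\dist}{(2+2C)\|H\|}\bigr)^{k}$, while the degree-$k$ call in line~\ref{line:sh2} needs the strictly smaller $\bigl(\tfrac{\dist}{(2+2C)\|H\|}\bigr)^{2k}$ (the base is $<1$ in any relevant regime). The other two terms in the minimum, $\mach_{\exc}$ and $\mach_{\ref{lem:multi-iqr-potential-apx}}$, decay like $\bigl(\dist/\|H\|\bigr)^k$ rather than $\bigl(\dist/\|H\|\bigr)^{2k}$, so there are genuine parameter regimes (e.g.\ $\dist$ near the lower bound $\tol = \Omega(\wacc^2\sqrt\phi/\scale)$ that $\regularize$ guarantees) in which none of the stated terms is as small as the requirement of the degree-$k$ $\comptau{}$ call. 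The paper's phrase ``$\find$ (and therefore $\comptau{}$) satisfy their guarantees'' only justifies the degree-$k/2$ calls nested inside $\find$, not the standalone degree-$k$ call in line~\ref{line:sh2}. Your fix --- adjoin $\mach_{\comptau{}}(n,k,C,\scale,\K,\dist(\calR,\Spec H))$ to the minimum --- is correct and, as you say, leaves the asymptotic form $2^{-O(k\log(\cdots))}$ unchanged. This is a real (if small and easily repaired) oversight in the lemma as stated.

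One minor point you sensibly elide but could have flagged: the hypothesis ``$|r| + 1.001\r\cp\K^{1/k}\pot(H)\le C\|H\|$'' references the $r$ returned by $\find$, which is not known before executing line~\ref{line:sh1}; cleaner would be to require the bound for every $r'\in\calR$, which the caller $\shqr$ in fact ensures via $\regularize$'s guarantee $\calR\subset\disk(0,1.1\|H\|)$. This is a logical tidiness issue, not a mathematical error.
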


\begin{proof}
    The definition of $\mach_{\Sh}$ ensures that $\exc$ and $\find$ (and therefore $\comptau{}$) satisfy their guarantees when called in the course of $\Sh$; the analysis of $\Sh$ is accordingly straightforward. In line \ref{line:sh1}, $\find$ produces an $\cp$-promising, $\r$-approximate Ritz value $r$ for $\cp = (1.01\K)^{\frac{4\log k}{k}}$ as in Table \ref{table:qrii-global-data}; in line \ref{line:sh2} --- because every subdiagonal of $H$ is assumed larger than $\wacc$ --- we know from definition of $\mach_{\Sh}$ and Lemma \ref{lem:multi-iqr-potential-apx} that if $\comptau{k}(H,(z-r)^k) \le (1-\gamma)^k \pot^k(H)$, then
    \begin{align*}
    \pot(\iqr(H,(z - r)^k)) 
        &\le 1.0011\pot(\exactqr(H,(z-r)^k)) \\
        &\le 1.0011\tau_{(z-r)^k}(H) \\
        &\le 1.0011\cdot \left(1.001 \comptau{k}(H,(z-r)^k)\right)^{1/k} \\
        &\le 1.002(1-\gamma)\pot(H).
    \end{align*}
    On the other hand, if $\comptau{k}(H,(z-r)^k) > (1-\gamma)^k \pot^k(H)$ in line \ref{line:sh2}, then using the guarantees for $\comptau{k}$,
    $$
        \tau_{(z-r)^k}^k(H) > 0.999\comptau{k}(H,(z-r)^k) \ge 0.999(1-\gamma)^k\pot(H).
    $$
    Finally, $\exc$ satisfies its guarantees from Lemma \ref{lem:exc-guarantees} when called with $\cp = (1.01 \K)^{\frac{4\log k}{k}}$ and $\stagrat = 0.999^{1/k}(1-\gamma)$. Thus with probability at least $1 - \phi$ at least one exceptional shift $s \in \calS$ satisfies either decoupling (some subdiagonal smaller than $\wacc$) or potential reduction ($\pot(\iqr(H,(z-s)^k)) \le 1.0011(1 - \gamma)\pot(H) \le 1.002(1-\gamma)\pot(H)$).
    
    For the arithmetic operations, $\Sh_{k,\K}$ requires one call to $\find$, one to $\comptau{k}$, one to $\exc$ with stagnation ratio $\stagrat = 0.999(1-\gamma)$, and finally $|\calS|$ calls to degree-$k$ $\iqr$. We can bound $|\calS| \le S(\varepsilon)$, where $\varepsilon$ is defined in the course of $\exc$ with stagnation ratio parameter $\stagrat = 0.999(1 - \gamma)$, and $S(\cdot)$ is defined in \eqref{eq:S-def}. Since and checking every shift in $\calS$ for potential reduction dominates the arithmetic operations, we get that
    $$
        T_{\Sh}(n,k,\K,\gamma,\cp,\r) = O\left(kn^2 \cdot \K^{\frac{8}{k-1}}\left(\frac{\cp\r}{(1-\gamma)}\right)^{\frac{4k}{k-1}}\right).
    $$
\end{proof}
\section{Finite Arithmetic Analysis of $\shqr$}

\subsection{Preservation of $\gap$ and $\kappa_V$}\label{sec:preserve}

\begin{lemma} 
\label{lem:gap-kappaV-perturbation}
Suppose $M$ has distinct eigenvalues. Then for any $E$ satisfying 
\begin{equation}\label{eqn:pertsize}
    \|E\| \le \frac{\gap(M)}{8n^2\cdot \kappa_V^3(M)}
\end{equation}
we have
\begin{equation}\label{eqn:pertgap}
    \gap(M+E)\ge \gap(M)-2\kappa_V(M)\|E\|
\end{equation}
and
\begin{equation}\label{eqn:pertkappav}
    \kappa_V(M+E)\le \kappa_V(M)+ 6n^2\frac{\kappa_V^3(M)}{\gap(M)}\|E\|.
\end{equation}

\end{lemma}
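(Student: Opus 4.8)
The plan is to deduce both inequalities from one structural fact: under \eqref{eqn:pertsize} the eigenvalues of $M$ and of $M+E$ are in bijective correspondence, each moving by at most $\kappa_V(M)\|E\|$. Fix once and for all a diagonalization $M = VDV^{-1}$ with $\|V\|\,\|V^{-1}\| = \kappa_V(M)$, write $D = \mathrm{diag}(\lambda_1,\dots,\lambda_n)$ and $g := \gap(M)$, and set $\tilde E := V^{-1}EV$, so that $\|\tilde E\| \le \kappa_V(M)\|E\|$ and $V^{-1}(M+E)V = D + \tilde E$.

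\emph{Step 1 (eigenvalue bijection and \eqref{eqn:pertgap}).} Apply the Bauer--Fike theorem along the homotopy $t\mapsto M+tE$, $t\in[0,1]$: every eigenvalue of $M+tE$ lies within $t\,\kappa_V(M)\|E\|$ of some $\lambda_i$, and by \eqref{eqn:pertsize} this radius is at most $g/8 < g/2$, so the eigenvalues never leave the pairwise disjoint disks $\disk(\lambda_i,g/2)$. A continuity and multiplicity-counting argument then produces a bijection $\hat\lambda_i\leftrightarrow\lambda_i$ of $\Spec(M+E)$ with $\Spec(M)$ satisfying $|\hat\lambda_i-\lambda_i|\le\kappa_V(M)\|E\|$; the triangle inequality gives $|\hat\lambda_i-\hat\lambda_j|\ge|\lambda_i-\lambda_j|-2\kappa_V(M)\|E\|$, which is \eqref{eqn:pertgap}. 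In particular the right-hand side is positive, so $M+E$ has distinct eigenvalues.

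\emph{Step 2 (perturbing the diagonalizer and \eqref{eqn:pertkappav}).} For each $i$ let $\Gamma_i$ be the positively oriented circle of radius $g/2$ centered at $\lambda_i$; by Step 1 it is disjoint from $\Spec(M)\cup\Spec(M+E)$ and encloses exactly the points $\lambda_i$ and $\hat\lambda_i$. Consider the rank-one Riesz projectors
$$
e_ie_i^\ast \;=\; \frac{1}{2\pi i}\oint_{\Gamma_i}(zI-D)^{-1}\,dz,
\qquad
\tilde P_i \;:=\; \frac{1}{2\pi i}\oint_{\Gamma_i}(zI-D-\tilde E)^{-1}\,dz .
$$
Since every point of $\Gamma_i$ is at distance $\ge g/2$ from all $\lambda_1,\dots,\lambda_n$, we have $\|(zI-D)^{-1}\|\le 2/g$ there; combined with \eqref{eqn:pertsize}, a Neumann series gives $\|(zI-D-\tilde E)^{-1}\|\le \tfrac{8}{3g}$ on $\Gamma_i$, and the resolvent identity then yields $\|\tilde P_i - e_ie_i^\ast\| \le \tfrac{g}{2}\cdot\tfrac{8}{3g}\cdot\|\tilde E\|\cdot\tfrac{2}{g} = \tfrac{8}{3g}\kappa_V(M)\|E\|$. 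Each $\tilde P_i e_i$ is therefore a \emph{nonzero} eigenvector of $D+\tilde E$ (nonzero because $\|\tilde P_i e_i-e_i\|\le 1/2$ by \eqref{eqn:pertsize}), so $W := [\,\tilde P_1 e_1\mid\cdots\mid\tilde P_n e_n\,]$ is invertible and $VW$ diagonalizes $M+E$. Writing $W = I+X$ with $X := [\,(\tilde P_1-e_1e_1^\ast)e_1\mid\cdots\mid(\tilde P_n-e_ne_n^\ast)e_n\,]$, we get $\|X\|\le\|X\|_F\le\sqrt n\,\max_i\|\tilde P_i-e_ie_i^\ast\|\le 1/2$, hence $\|W\|\,\|W^{-1}\|\le(1+\|X\|)(1-\|X\|)^{-1}\le 1+4\|X\|$. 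Therefore
$$
\kappa_V(M+E)\;\le\;\|V\|\,\|V^{-1}\|\cdot\|W\|\,\|W^{-1}\|
\;\le\;\kappa_V(M)\bigl(1+4\|X\|\bigr)
\;\le\;\kappa_V(M)+\frac{11\,\sqrt n\,\kappa_V^2(M)}{g}\,\|E\|,
$$
and since $11\sqrt n\,\kappa_V^2(M)\le 6n^2\kappa_V^3(M)$ for $n\ge 2$ (the case $n=1$ being trivial), this gives \eqref{eqn:pertkappav}, with room to spare on the constant.

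\emph{Main obstacle.} The delicate point is Step 2. The off-the-shelf route---bounding $\kappa_V(M+E)$ by $n$ times the largest eigenvalue condition number of $M+E$, each of which is close to the corresponding eigenvalue condition number of $M$ and hence $\le\kappa_V(M)$---only gives $\kappa_V(M+E)\lesssim n\,\kappa_V(M)$, which is far too weak: here the leading term must be \emph{exactly} $\kappa_V(M)$ plus a lower-order correction. One is forced to perturb the \emph{entire} eigenvector matrix at once and show that the correction factor $W$ is within $O(\sqrt n\,\kappa_V^2(M)\|E\|/g)$ of the identity. Tracking which estimate consumes which power of $\kappa_V(M)$ explains the exponent $3$ in the hypothesis \eqref{eqn:pertsize}: one power to pass from $E$ to $\tilde E$, one in the resolvent bound $\|(zI-D)^{-1}\|\le 2/g$, and one more to force $\|X\|\le 1/2$; the factor $n^2$ absorbs the $\sqrt n$ coming from $\|X\|_F$ with margin to spare. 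Everything else is bookkeeping of the resolvent and Neumann-series constants.
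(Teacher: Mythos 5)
Your proof is correct and takes a genuinely different (and cleaner) route for \eqref{eqn:pertkappav}. The paper defers the eigenvector perturbation step to a black-box result from \cite{banks2020pseudospectral}, namely that under $\|E\|\le\gap(M)/8\kappa_V(M)$ there is a diagonalizer $V'$ of $M+E$ with $\|v_i - v_i'\|\le 2n\frac{\kappa_V(M)}{\gap(M)}\|E\|\|v_i\|$ columnwise; it then converts this to a Frobenius-norm bound on $\|V-V'\|$, uses singular-value stability $|\sigma_i(V')-\sigma_i(V)|\le\|V-V'\|$, and finally some convexity bookkeeping on the map $\eta\mapsto\tfrac{\sqrt{\kappa_V}+\eta}{1/\sqrt{\kappa_V}-\eta}$. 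You instead work in the $V$-coordinates, write $V^{-1}(M+E)V = D+\tilde E$, and build the new eigenvector basis directly from Riesz projectors $\tilde P_i = \tfrac{1}{2\pi i}\oint_{\Gamma_i}(zI-D-\tilde E)^{-1}dz$, controlling $\|\tilde P_i - e_ie_i^\ast\|$ by a one-line resolvent-identity estimate. This has two advantages: it is self-contained (no appeal to \cite{banks2020pseudospectral}), and it yields the sharper intermediate bound $\kappa_V(M+E)\le\kappa_V(M)(1+4\|X\|)\le\kappa_V(M)+\tfrac{11\sqrt n\,\kappa_V^2(M)}{\gap(M)}\|E\|$, which you then weaken to the paper's $6n^2\kappa_V^3/\gap$ form using $11\sqrt n\le 6n^2$ for $n\ge 2$ and $\kappa_V\ge 1$. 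Your Step 1 (Bauer--Fike along the homotopy) matches the paper's treatment of \eqref{eqn:pertgap}; the verifications I would flag in a final write-up are the resolvent chain $\|(zI-D)^{-1}\|\le 2/g$ on $\Gamma_i$ (which needs the observation that points of $\Gamma_i$ are $\ge g/2$ from \emph{every} $\lambda_j$, $j\ne i$ as well, by the triangle inequality), the Neumann bound using $\|(zI-D)^{-1}\tilde E\|\le 1/4$ from \eqref{eqn:pertsize}, and the fact that $\tilde P_i e_i\ne 0$ so that $W$ is a genuine eigenbasis -- all of which you state and which check out.
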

\begin{proof}
The assertion in \eqref{eqn:pertgap} is an immediate consequence of the Bauer-Fike theorem. For \eqref{eqn:pertkappav}, let $V$ be scaled so that $\|V\| = \|V^{-1}\| = \kappa_V(A)$, with (not necessarily unit) columns $v_1,...,v_n$ satisfying $Mv_i = \lambda_i v_i$ for each $i \in [n]$. It follows from \cite[Proposition 1.1]{banks2020pseudospectral} that whenever $\|E\| \le \frac{\gap(M)}{8\kappa_V(M)}$, there exists a matrix $V'$ with columns $v_1',...,v_n'$ diagonalizing $M' := M + E$, such that
$$
    \|v_i - v'_i\| \le 2n\frac{\kappa_V(M)}{\gap(M)}\|E\|\|v_i\|,
$$
which implies
$$
    \|V - V'\| \le \|V - V'\|_F \le 2n^{3/2}\frac{\kappa_V(M)}{\gap(M)}\|E\|\|V\|_F \le 2n^2\frac{\kappa_V(M)}{\gap(M)}\|V\|.
$$

It is standard that each singular value of $V'$ satisfies $|\sigma_i(V') - \sigma_i(V)| \le \|V - V'\|$, so using $\|V\| = \|V^{-1}\| = \sqrt{\kappa_V(M)}$, we have
\begin{align*}
    \kappa_V(M') &\le \|V'\|\|(V')^{-1}\| \\
    &\le \frac{\|V\| + \|V - V'\|}{\|V^{-1}\|^{-1} - \|V - V'\|} \\
    &\le \kappa_V(M)\frac{1 + 2n^2\frac{\kappa_V(M)}{\gap(M)}\|E\|}{1 - 2n^2\frac{\kappa_V^2(M)}{\gap(M)}\|E\|} \\
    &\le \kappa_V(M) + \frac{8}{3}n^2(1 + \kappa_V(M))\frac{\kappa_V^2(M)}{\gap(M)}\|E\|,
\end{align*}
where in the final line we have used \eqref{eqn:pertsize} to argue that $2n^2\frac{\kappa_V^2(M)}{\gap(M)}\|E\| \le 1/4$, and convexity of the function $f(x) = \frac{1 + x/\kappa_V(M)}{1 - x}$ to bound by the linear interpolation between $x = 0$ and $x = 1/4$. The advertised bound then follows from applying $\kappa_V(M) \ge 1$ and bounding $16/3 \le 6$.
\end{proof}

\begin{lemma}
    \label{lem:kappav-deflation}
    If $M$ is block upper triangular and $M'$ is a diagonal block, then $\kappa_V(M') \le \kappa_V(M)$ and $\gap(M') \ge \gap(M)$.
\end{lemma}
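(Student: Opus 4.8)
The two assertions are independent and both follow from the block structure of $M$. For the gap bound: writing the diagonal blocks of $M$ as $M_1,\dots,M_p$, block‑triangularity gives $\det(M-zI)=\prod_{j=1}^{p}\det(M_j-zI)$, so the characteristic polynomial of $M$ is the product of those of its diagonal blocks and $\Spec(M')$ is a sub‑multiset of $\Spec(M)$. Since $\gap$ is the minimum of $|\lambda_i-\lambda_j|$ over all pairs of eigenvalues counted with multiplicity, passing to a sub‑multiset can only increase this minimum (and if $M'$ has at most one eigenvalue then $\gap(M')=+\infty$), so $\gap(M')\ge\gap(M)$.

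For $\kappa_V(M')\le\kappa_V(M)$ we may assume $\kappa_V(M)<\infty$, so $M$ is diagonalizable; in all applications here $M$ in fact has $n$ distinct eigenvalues, which I will assume for cleanliness. First I would reduce to the case where $M'$ is a \emph{leading} diagonal block: if $M'$ is an arbitrary diagonal block, let $\widetilde M$ be the trailing principal submatrix of $M$ containing $M'$ and all blocks after it, so that $M=\begin{pmatrix}\ast&\ast\\0&\widetilde M\end{pmatrix}$ and $\widetilde M=\begin{pmatrix}M'&\ast\\0&\ast\end{pmatrix}$; it then suffices to treat the leading‑ and trailing‑block cases, and the trailing‑block case follows from the leading one by taking adjoints (which preserves $\kappa_V$, since $N=V\Lambda V^{-1}$ gives $N^{\ast}=V^{-\ast}\bar\Lambda V^{\ast}$ with $\|V^{-\ast}\|\,\|V^{\ast}\|=\|V^{-1}\|\,\|V\|$) and conjugating by the unitary block‑swap permutation (also $\kappa_V$‑preserving). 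For the leading‑block case, write $M=\begin{pmatrix}M'&B\\0&D\end{pmatrix}$ with $M'$ of size $k$: the coordinate subspace $\mathcal U=\bC^{k}\times\{0\}$ is $M$‑invariant with $M|_{\mathcal U}=M'$, hence $M'$ is diagonalizable, and — using that the eigenvalues of $M$ are distinct — every eigenvector of $M$ whose eigenvalue lies in $\Spec(M')$ must lie in $\mathcal U$ (it spans the $k$‑dimensional $\mathcal U$). Consequently any $V$ with $V^{-1}MV$ diagonal, after permuting columns so that those indexed by $\Spec(M')$ come first, is block upper triangular, $V=\begin{pmatrix}V_1&V_{12}\\0&V_{22}\end{pmatrix}$, with $V_1$ diagonalizing $M'$; since $V^{-1}$ then has $(1,1)$‑block $V_1^{-1}$, testing $V$ and $V^{-1}$ against vectors of the form $(x,0)$ gives $\|V\|\ge\|V_1\|$ and $\|V^{-1}\|\ge\|V_1^{-1}\|$, so $\|V_1\|\,\|V_1^{-1}\|\le\|V\|\,\|V^{-1}\|$. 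Taking the infimum over all such $V$ yields $\kappa_V(M')\le\kappa_V(M)$.

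The argument is essentially routine linear algebra, so I do not expect a genuine obstacle, only bookkeeping. The two points needing a little care are the reduction of an arbitrary diagonal block to a leading one and the localization of eigenvectors inside the invariant subspace $\mathcal U$, both of which invoke distinctness of the eigenvalues of $M$ — the case relevant throughout this paper. (Covering general diagonalizable $M$ would instead require showing the infimum defining $\kappa_V(M)$ is attained on an eigenbasis adapted to $\mathcal U$, which is somewhat more delicate and not needed here.)
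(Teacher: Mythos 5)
Your proof is correct and follows the same approach as the paper: show that any diagonalizing matrix $V$ for $M$ is, after a column permutation, block upper triangular with leading block $V'$ diagonalizing $M'$, then observe that $\|V'\|\le\|V\|$ and $\|(V')^{-1}\|\le\|V^{-1}\|$. You do two useful things the paper glosses over: you spell out the reduction from an arbitrary diagonal block to a leading one (via the adjoint plus a block‑swap unitary, both $\kappa_V$‑preserving), and you justify the key structural claim about $V$ by localizing eigenvectors inside the invariant coordinate subspace, correctly flagging that this step uses the distinct‑eigenvalue assumption (harmless here since $\gap(H)>0$ throughout, but worth noting since the lemma is stated for general diagonalizable $M$). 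So: same route, more of the bookkeeping made explicit.
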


\begin{proof}
    The gap assertion is immediate since $\Spec M' \subset \Spec M$. For $\kappa_V$, assume without loss of generality that $M$ is diagonalizable (otherwise the inequality is trivial) and
    $$
        M = \begin{pmatrix} M' & \ast \\ 0 & \ast \end{pmatrix}.
    $$
    We claim that every $V$ diagonalizing $M$ is of the form
    $$
        V = \begin{pmatrix} V' & \ast \\ 0 & \ast \end{pmatrix},
    $$
    where $V'$ diagonalizes $M'$. To see this, if $MV = VD$, then block upper triangularity gives $M'V' = V'D'$ for $D'$ the upper left block of $D$. Moreover, $V$ invertible implies $V'$ is as well, and quantitatively $\|V'\|\|(V')^{-1}\| \le \|V\|\|V^{-1}\|$. Choosing $V$ so that $\kappa_V(M) = \|V\|\|V^{-1}\|$, we have
    $$
        \kappa_V(M') \le \|(V')\|\|(V')^{-1}\| \le \|V\|\|V^{-1}\| = \kappa_V(M).
    $$
\end{proof}

\subsection{The Full Algorithm}
\label{sec:fullalg}
We are now ready to analyze, in finite arithmetic, how the shifting strategy $\Sh_{k,\K}$ introduced in \cite{banks2021global} can be used to approximately find all eigenvalues of a Hessenberg matrix $H$. One simple subroutine is required in addition to the ones described in the preceding sections: $\deflate(H,\wacc,k)$ takes as input a Hessenberg matrix $H$, deletes any of the bottom $k-1$ subdiagonal entries smaller than $\wacc$, and outputs the resulting diagonal blocks $H_1,H_2,...$. It runs in  $T_{\deflate}(H,\wacc,k) = k$ arithmetic operations. \\

\noindent
\begin{boxedminipage}{\textwidth}
$$\shqr$$
\textbf{Input:} Hessenberg matrix $H$, accuracy $\delta$, failure probability tolerance $\phi$ \\
\textbf{Global Data:} Eigenvector condition number bound $\K$, eigenvalue gap bound $\Gamma$, matrix norm bound $\scale$, original matrix dimension $n$\\
\textbf{Requires:} $\scale \ge 2\|H\|$, $\K \ge 2\kappa_V(H)$, $\Gamma \le \gap(H)/2$, $\acc \le \scale$ \\
\textbf{Output:} A multiset $\Lambda \subset \bC$\\
\textbf{Ensures:} With probability at least $1 - \phi$, $\Lambda$ are the eigenvalues of some $\ax H$ with $\|\ax H - H\| \le \acc$\\
\begin{enumerate}
    \item \label{line:shqr-set-parameters} 
    $\wacc \gets \frac{1}{4n} \min\left\{\acc,\frac{\Gamma}{8n^2 \K^2}\right\}$, $\varphi \gets \frac{\phi}{3n^2}\frac{\log\frac{1} {1.002(1-\gamma)}}{\log \frac{\scale}{\wacc}}$
    \item \label{line:shqr-check-dim} \textbf{If} $\text{dim}(H) \le k$, $\Lambda \gets  \smalleig(H,\acc,\phi)$, output $\Lambda$ and halt
    \item \label{line:shqr-else} \textbf{Else} $\Lambda \gets \emptyset$ and
    \begin{enumerate}
        \item \label{line:shqr-while-not-decoupled} \textbf{While} $\min_{n-k+1\leq i \leq n} H_{i,i-1} > \wacc$
        \begin{enumerate}
            \item \label{line:shqr-ROD} $[\calR,\next H,\decouple] = \regularize(H,\wacc,\varphi)$
            \item \label{line:shqr-decouple-true} \textbf{If} $\decouple = \true$, $H \gets \next H$ and end while
            \item \label{line:shqr-decouple-false} \textbf{Else if} $\decouple = \false$, $H \gets \Sh_{k,B}(H,\calR,\wacc,\varphi)$ 
        \end{enumerate}
        \item \label{line:shqr-deflate} $[H_1,H_2,...H_\ell] = \deflate(H,\wacc)$
        \item \label{line:shqr-recurse} 
        \textbf{For} each $j \in [\ell]$
        \begin{enumerate}
            \item \textbf{If} $\dim(H_j) \le k$, $\Lambda \gets \Lambda \sqcup \smalleig(H_j,\acc/n,\phi/3n)$
            \item \textbf{Else} repeat lines \ref{line:shqr-while-not-decoupled}-\ref{line:shqr-recurse} on $H_j$
        \end{enumerate}  
    \end{enumerate}
\end{enumerate}
\end{boxedminipage}

\begin{theorem}[Guarantees for $\shqr$]
    \label{thm:shqr-guarantees}
    Let $k$, $\r$, $\cp$, and $\gamma$ be set in terms of $\K$ as in \eqref{eqn:setk}, $N_{\mathsf{dec}}$ be defined in \eqref{eq:ndec}, and $\wacc$ and $\varphi$ be defined in line \ref{line:shqr-set-parameters} of $\shqr$. Assuming
    \begin{align}
        \mach 
        &\le \mach_{\shqr}(n,k,\scale,\K,\acc) \nonumber \\
        &:= \min\left\{
        \frac{\wacc}{4.5 k N_{\mathsf{dec}} \cdot n \muqr(n) \scale}, \mach_{\regularize}\left(n,k,\scale,\K,\r,\wacc,\varphi\right)\right., \nonumber \\
        &\qquad\qquad\qquad \left. \mach_{\Sh}\left(n,k,3,\scale,\K,\frac{\wacc^2\sqrt{\varphi}}{32\cdot 101 \cdot \scale\sqrt{2k}},\r,\wacc,\varphi,\gamma,\cp\right)\right\} \label{eq:mach-requirement-shqr} \\
        &= 2^{-O\left(k\log\frac{n\scale\K}{\acc\Gamma\phi}\right)}, \nonumber
     \end{align}
    $\shqr$ satisfies its guarantees and runs in at most
    \begin{align}
        T_{\shqr}(n,k,\acc,\K,\scale,\gamma) &\le n\Big( T_{\regularize}(n,k,\true) \nonumber \\
        &\qquad + N_{\mathsf{dec}}\Big(T_{\regularize}(n,k,\false) + T_{\Sh}(n,k,\gamma,\K,\cp,\r)\Big) \label{eq:t-shqr} \\
        &\qquad + T_{\deflate}(k) \Big) \nonumber \\
        &= O\left(\left(\log\frac{n\K\scale}{\acc\Gamma}k\log k + k^2\right)n^3\right) \nonumber
    \end{align}
    arithmetic operations, plus $O(n\log \frac{n\K\scale}{\acc\Gamma})$ calls to $\smalleig$ with accuracy $\Omega(\frac{\Gamma^2}{n^4\K^4 \scale})$ and failure probability tolerance $\Omega(\frac{\phi}{n^2\log \frac{n\K\scale}{\acc\Gamma}})$.
\end{theorem}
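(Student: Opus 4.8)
The plan is to argue by induction over the recursion tree of $\shqr$, proving that every Hessenberg matrix $H'$ on which the body of $\shqr$ is run (and every matrix handed to $\smalleig$) satisfies the three invariants $\|H'\|\le\scale$, $\kappa_V(H')\le\K$, $\gap(H')\ge\gapbound$; these are precisely the hypotheses that make every call to $\regularize$ and $\Sh_{k,\K}$ meet its ``Requires'' clause, and they cause the precision requirements $\mach\le\mach_{\regularize}$, $\mach\le\mach_{\Sh}$ of Lemmas \ref{lem:regularize-guarantees} and \ref{lem:sh-guarantees} to follow from $\mach\le\mach_{\shqr}$. The conceptual key is that each degree-$k$ $\iqr$ step actually committed to the current block --- one per call to $\Sh_{k,\K}$ (line \ref{line:sh2} or \ref{line:sh4}) and one per decoupling invocation of $\regularize$ --- replaces the block $H'$ by $\ax Q^\ast H'\ax Q$ plus an additive error of norm at most $1.4k(1+C)\|H'\|\muqr(n)\mach$ by Lemma \ref{lem:iqr-multi-backward-guarantees}, where $C=O(1)$ uniformly bounds the shifts in units of $\|H'\|$ (the $\r$-optimal Ritz values returned by $\regularize$ lie in $\disk(0,1.1\|H'\|)$, and constraint \eqref{eqn:setk} forces $\r,\cp,\K^{1/k}$ to be bounded by a universal constant, so the exceptional shifts produced by $\exc$ lie in a disk of radius $O(\|H'\|)$ about the origin). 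Because conjugating a diagonal block by a unitary is conjugating the whole working matrix by a block-diagonal unitary, and an additive error confined to a diagonal block is an additive error of the same operator norm to the whole matrix, all of these errors --- together with each deflation step of $\deflate$, which zeroes subdiagonal entries of magnitude $<\wacc$ --- accumulate additively with coefficient $1$.

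Next I would bound the number of steps. While the loop of line \ref{line:shqr-while-not-decoupled} runs, the current block is $\wacc$-unreduced and hence $\pot(H')\ge\wacc$ (otherwise the product of its bottom $k$ subdiagonals would be $<\wacc^k$); but each non-decoupling iteration applies $\Sh_{k,\K}$, which by Lemma \ref{lem:sh-guarantees} either $\wacc$-decouples or drives $\pot$ of the output below $1.002(1-\gamma)\pot(H')$, and $1.002(1-\gamma)<1$, so after $N_{\mathsf{dec}}=O(\log(\scale/\wacc))$ iterations (the quantity of \eqref{eq:ndec}) decoupling has occurred. Each decoupling splits a block of size $>k$ into at least two blocks whose sizes sum to that of the parent, so the recursion tree has at most $n$ leaves and at most $n$ internal nodes, giving at most $n(N_{\mathsf{dec}}+1)$ committed $\iqr$ steps and at most $n-1$ deflation zeroings overall. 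Hence the total accumulated perturbation $E$ satisfies $\|E\|\le n(N_{\mathsf{dec}}+1)\cdot O(k\scale\muqr(n)\mach)+(n-1)\wacc$, and the choices of $\mach$ in \eqref{eq:mach-requirement-shqr} (whose first branch is exactly $\wacc/(4.5kN_{\mathsf{dec}}n\muqr(n)\scale)$) and of $\wacc$ in line \ref{line:shqr-set-parameters} make this $O(n\wacc)$, below both $\acc$ and the threshold $\gap(H)/(8n^2\kappa_V^3(H))$ of Lemma \ref{lem:gap-kappaV-perturbation}. Writing the global working matrix at any point as $W^\ast(H+E')W$ for a unitary $W$ and $\|E'\|\le\|E\|$, Lemma \ref{lem:gap-kappaV-perturbation} applied to $H$ together with Lemma \ref{lem:kappav-deflation} (passing to a diagonal block of a block-triangular matrix never raises $\kappa_V$ and never lowers $\gap$) yields $\kappa_V(H')\le 2\kappa_V(H)\le\K$, $\gap(H')\ge\gap(H)/2\ge\gapbound$, $\|H'\|\le 2\|H\|\le\scale$, closing the induction. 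For correctness, the leaf calls to $\smalleig$ return forward approximations of the leaf blocks' eigenvalues with error at most $\acc/n$; realizing each such approximate spectrum as the spectrum of a perturbation of norm $\le\acc/n$ of the corresponding block's Schur form and reassembling shows that $\Lambda=\Spec(\ax H)$ for a matrix $\ax H$ unitarily similar to $H+E''$ with $\|E''\|\le\|E\|+\acc/n\le\acc$, as required.

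It remains to collect the failure probability and the two complexity tallies. Each of the $\le n(N_{\mathsf{dec}}+1)$ calls to $\regularize$ and $\le nN_{\mathsf{dec}}$ calls to $\Sh_{k,\K}$ fails with probability at most $\varphi$, and each of the $\le n$ leaf calls to $\smalleig$ with probability at most $\phi/3n$; the setting $\varphi=\tfrac{\phi}{3n^2}\tfrac{\log(1/(1.002(1-\gamma)))}{\log(\scale/\wacc)}=\Theta(\phi/(n^2N_{\mathsf{dec}}))$ makes the union bound over all of these at most $\phi$. Summing the per-call costs of Lemmas \ref{lem:regularize-guarantees} and \ref{lem:sh-guarantees} and the $O(k)$ cost of $\deflate$ over the $\le n$ rounds gives $T_{\shqr}\le n\big(T_{\regularize}(n,k,\true)+N_{\mathsf{dec}}(T_{\regularize}(n,k,\false)+T_{\Sh})+T_{\deflate}(k)\big)$, which is \eqref{eq:t-shqr}; substituting $T_{\regularize}(n,k,\true)=O(k^2n^2)$, $T_{\Sh}=O(k\log k\cdot n^2)$ (using \eqref{eqn:setk} to bound the net size $S(\cdot)$ and the $\K^{8/(k-1)}$, $(\cp\r/(1-\gamma))^{4k/(k-1)}$ factors by $O(1)$), and $N_{\mathsf{dec}}=O(\log\tfrac{n\K\scale}{\acc\gapbound})$ gives the claimed $O\big((\log\tfrac{n\K\scale}{\acc\gapbound}\cdot k\log k+k^2)n^3\big)$. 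Likewise $\smalleig$ is invoked once inside each $\regularize$ and once per leaf, hence $O(nN_{\mathsf{dec}})=O(n\log\tfrac{n\K\scale}{\acc\gapbound})$ times, with accuracy $\Omega(\wacc^2/\scale)$ and failure tolerance $\Omega(\varphi)$, matching the statement; finally $\mach_{\shqr}=2^{-O(k\log(n\scale\K/\acc\gapbound\phi))}$ because each of the three terms in the minimum defining it has that form once $\wacc$ and $\varphi$ are expanded.

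The step I expect to be the main obstacle is the error-budget bookkeeping of the second paragraph: maintaining all three quantitative invariants simultaneously through an entire recursive run forces a uniform bound on the \emph{total} accumulated backward error, which dictates the precise calibration of $\wacc$ against $n,\gapbound,\K$ (so that Lemma \ref{lem:gap-kappaV-perturbation} applies to the original $H$, and the various side conditions such as $\gap(H)\ge 2\wacc^2/\scale$ for $\regularize$ hold) and of $\mach$ against $\wacc$ (so that the $\iqr$ contributions are dominated by the deflation contributions). A secondary but fiddly point is verifying at each call site that the subroutine hypotheses hold --- in particular that $\dist(\calR,\Spec H)\ge\tol$ with the claimed probability, which activates $\find$, $\comptau{k}$, and the forward-stability guarantees of Lemma \ref{lem:multiiqrstability}, and that the shifts stay inside a disk of radius $O(\|H\|)$ so that the constant $C=3$ hard-wired into $\mach_{\shqr}$ is legitimate.
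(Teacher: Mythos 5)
Your proof is essentially the same as the paper's: both argue by induction over the recursion tree, promoting per-block unitary conjugations to block-diagonal conjugations of the full matrix, tracking the accumulated additive backward error from committed $\iqr$ steps (via Lemma~\ref{lem:iqr-multi-backward-guarantees}) and deflation zeroings, invoking Lemma~\ref{lem:gap-kappaV-perturbation} and Lemma~\ref{lem:kappav-deflation} to maintain the norm/$\kappa_V$/$\gap$ invariants against the original $H$, and concluding the failure-probability union bound and the arithmetic tally in the same way. The only difference is organizational: the paper isolates the while-loop analysis into an intermediate lemma (Lemma~\ref{lem:while-guarantees}) with the slightly slack invariants $(1-1/2n)\scale$, $(1-1/2n)\K$, $(1+1/2n)\Gamma$ so accumulation can be chained step by step, whereas you bound the \emph{total} accumulated perturbation once and feed that global budget into Lemma~\ref{lem:gap-kappaV-perturbation}; the two are equivalent, and your budget calculation (dominant deflation term $(n-1)\wacc$ with the $\iqr$ contributions suppressed by the first branch of $\mach_{\shqr}$) matches the paper's constant calibration.
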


In the above result, we assume access to an upper bound $\scale \ge 2\|H\|$ and show that $\shqr$ can approximate the eigenvalues of $H$ with (absolute) backward error $\acc$, whereas in our main Theorem \ref{thm:main}, we ask for (relative) backward error $\acc$. To prove Theorem \ref{thm:main} from Theorem \ref{thm:shqr-guarantees}, we need only compute an upper bound $\scale \ge 2\|H\|$ and call $\shqr$ with accuracy $\acc/\scale$. Such a bound can be computed (for instance) using random vectors or, at the cost of a factor of $\sqrt n$, by taking the Frobenius norm of $H$. In either case, the arithmetic cost and precision are dominated by the requirements for $\shqr$ itself.

\begin{proof}[Proof of Theorem \ref{thm:shqr-guarantees}]
    At a high level, $\shqr$ is given an input matrix $H$,  $\wacc$-decouples $H$ to a unitarily similar matrix $\next H$ via a sequence of applications of $\regularize + \Sh_{k,\K}$, deflates $\next H$ to a block upper triangular matrix with diagonal blocks $H_1,...,H_\ell$, then repeats this process on each block $H_j$ with dimension larger than $k\times k$. Since the effect of $\regularize$ and $\Sh_{k,\K}$ on any input matrix $H'$ is approximately a unitary conjugation, it will be fruitful for the analysis to regard each of the blocks $H_1,...,H_\ell$ as embedded in the original matrix, and promote the approximate unitary conjugation actions of the subroutines on each block to unitary conjugations of the full matrix. The same goes once each of $H_1,...,H_\ell$ is decoupled and deflated and we pass to further submatrices of each one. Importantly, this viewpoint is necessary \textit{only} for the analysis: the algorithm need not actually manipulate the entries outside the blocks $H_1,...,H_\ell$. In this picture, the end point of the algorithm is a matrix of the form
    \begin{equation}
        \label{eq:shqr-endpoint}
        \begin{pmatrix} L_1 & \ast & \ast \\ & L_2 & \ast \\ & & \ddots \end{pmatrix},
    \end{equation}
    where $L_1,L_2,...$ are all $k\times k$ or smaller matrices on which $\smalleig$ can be called directly, and the $\ast$ entries are unknown and irrelevant to the algorithm. By the guarantees on $\smalleig$ (and the fact that $\beta$-forward approximation of eigenvalues implies $\beta$-backward approximation), the output of the algorithm is thus
    $$
        \bigsqcup_j \smalleig(L_j,\wacc,\varphi) = \bigsqcup_j \Spec \ax L_j = \Spec  \begin{pmatrix} \ax L_1 & \ast & \ast \\ & \ax L_2 & \ast \\ & & \ddots \end{pmatrix}
    $$
    where $\ax L_1,\ax L_2,...$ are some matrices satisfying $\|L_j - \ax L_j\| \le \acc/n$, and the remaining entries are identical to those in \eqref{eq:shqr-endpoint}. Our goal in the proof will thus be to show that for some unitary $\ax Q$,
    $$
        \left\| \begin{pmatrix} L_1 & \ast & \ast \\ & L_2 & \ast \\ & & \ddots \end{pmatrix} - \ax Q^\ast H \ax Q \right\| \le \acc - \acc/n,
    $$
    where the left hand matrix is a block upper triangular matrix with the blocks $L_1, L_2,...$ on the diagonal. This will in turn imply that
    \begin{align*}
        \left\| \begin{pmatrix} \ax L_1 & \ast & \ast \\ & \ax L_2 & \ast \\ & & \ddots \end{pmatrix} - \ax Q^\ast H \ax Q \right\| 
        &\le \left\| \begin{pmatrix} L_1 - \ax L_1 & \ast & \ast \\ & L_2 - \ax L_2 & \ast \\ & & \ddots \end{pmatrix} \right\| + \acc - \acc/n \\
        &\le \max_i \|L_i - \ax L_i\| + \acc - \acc/n \le \acc,
    \end{align*}
    as desired.
    
    We begin by analyzing the while loop in line \ref{line:shqr-while-not-decoupled}.
    
    \begin{lemma}
        \label{lem:while-guarantees}
        Assume that at during the execution of $\shqr$, the while loop in line \ref{line:shqr-while-not-decoupled} is initialized with a matrix $H'$ satisfying $\|H'\| \le (1 - 1/2n)\scale$, $\kappa_V(H') \le (1 - 1/2n)\K$, and $\gap(H') \ge (1 + 1/2n)\Gamma$. Let
        \begin{equation}
        \label{eq:ndec}
            N_{\mathsf{dec}} := \frac{\log \frac{\scale}{\wacc}}{\log \frac{1}{1.002(1-\gamma)}}.
        \end{equation}
        If
        $$
            \mach \le \mach_{\shqr}(n,k,\scale,\K,\acc),
        $$
        then the loop terminates in at most $N_{\mathsf{dec}}$ iterations, having produced a $\wacc$-decoupled matrix $\next{H'}$ at most $\wacc$-far from a unitary conjugate of $H'$.
    \end{lemma}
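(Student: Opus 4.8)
The plan is to run an induction over the iterations of the while loop of line \ref{line:shqr-while-not-decoupled}, maintaining that the running matrix $H$ is $\wacc$-unreduced (which holds by the loop guard) and satisfies $\|H\|\le\scale$, $\kappa_V(H)\le\K$, and $\gap(H)\ge\Gamma$; these are exactly the hypotheses required to invoke $\regularize$ and $\Sh_{k,\K}$ with the guarantees of Lemmas \ref{lem:regularize-guarantees} and \ref{lem:sh-guarantees}. Throughout we condition on the event $\mathcal{E}$ that every such call behaves as guaranteed; since (by the termination argument below) there are at most $2N_{\mathsf{dec}}$ of them, a union bound gives $\P[\mathcal{E}]\ge 1-2N_{\mathsf{dec}}\varphi$, and the setting of $\varphi$ in line \ref{line:shqr-set-parameters} is chosen precisely so that this is absorbed into the overall $\phi$-budget in the proof of Theorem \ref{thm:shqr-guarantees}. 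The base case of the induction is the hypothesis on $H'$, which leaves a slack of $\tfrac{1}{2n}$ in each of the last three bounds --- this slack is the error budget for the whole loop.

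For the inductive step, note that $\regularize$ either returns $\decouple=\false$ and leaves $H$ untouched, or returns $\decouple=\true$ with $\next H=\iqr(H,(z-\chr)^k)$; and $\Sh_{k,\K}$ returns $\next H=\iqr(H,(z-s)^k)$ where $s$ is either a $\r$-optimal Ritz value (hence $|s|\le 1.1\|H\|$, as established in the proof of Lemma \ref{lem:regularize-guarantees}) or an exceptional shift produced by $\exc$ (hence $|s|\le|r|+\ax R$, bounded by a constant times $\|H\|$ thanks to the parameter settings \eqref{eqn:settheta}, which make $\r,\cp,\K^{1/k}$ all $O(1)$). Thus every committed update is a single degree-$k$ $\iqr$ step whose shifts lie in a disk of radius $O(\|H\|)$, and the requirement $\mach\le\mach_{\shqr}$ --- via $\mach\le\mach_{\regularize}$, and $\mach\le\mach_{\Sh}$ together with the bound $\dist(\calR,\Spec H)\ge\tol$ from $\regularize$ and monotonicity of $\mach_{\Sh}$ in its distance argument --- lets us apply Lemma \ref{lem:iqr-multi-backward-guarantees}: there is a unitary $\ax Q$ with $\|\next H-\ax Q^\ast H\ax Q\|=O(k\|H\|\muqr(n)\mach)\le O(kn^{3/2}\scale\mach)$. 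Feeding this per-step bound into Lemma \ref{lem:gap-kappaV-perturbation} bounds the per-step drift of $\kappa_V$ and $\gap$ (and checks the hypothesis $\|E\|\le\gap(H)/(8n^2\kappa_V^3(H))$ of that lemma, together with the requirement $\gap(H)\ge 2\wacc^2/\scale$ of $\regularize$). Composing the approximate conjugations and using that conjugation by a unitary is an isometry, after at most $N_{\mathsf{dec}}$ iterations the running matrix is a unitary conjugate of $H'$ plus an error of norm at most $N_{\mathsf{dec}}\cdot O(kn^{3/2}\scale\mach)$; by the first term $\frac{\wacc}{4.5\,kN_{\mathsf{dec}}n\muqr(n)\scale}$ in the definition of $\mach_{\shqr}$, and using $\wacc=\frac{1}{4n}\min\{\acc,\Gamma/(8n^2\K^2)\}$, this is at most $\wacc$, while the total drift of $\kappa_V$ and $\gap$ stays below the slacks $\tfrac{1}{2n}\K$ and $\tfrac{1}{2n}\Gamma$. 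This closes the induction and gives the backward-error conclusion.

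For termination: by Lemma \ref{lem:minnorm}, $\pot(H')\le\|H'\|\le\scale$. In each iteration, either $\regularize$ returns $\decouple=\true$ --- in which case $\next H=\iqr(H,(z-\chr)^k)$ is $\wacc$-decoupled and the loop exits immediately --- or $H$ is replaced by $\Sh_{k,\K}(H,\calR,\wacc,\varphi)$, which on $\mathcal{E}$ is either $\wacc$-decoupled (so the loop guard fails at the next check) or satisfies $\pot(\next H)\le 1.002(1-\gamma)\pot(H)$. The potential therefore drops by the fixed factor $1.002(1-\gamma)<1$ on every iteration of the last type, so after at most $N_{\mathsf{dec}}$ --- the quantity defined in \eqref{eq:ndec} --- such iterations it falls below $\wacc$, forcing one of the $k$ bottom subdiagonal entries below $\wacc$ and hence the loop guard to fail. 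In all cases the loop terminates within $N_{\mathsf{dec}}$ iterations (which justifies the $\le 2N_{\mathsf{dec}}$ bound on subroutine calls used above) with a $\wacc$-decoupled matrix, which by the previous paragraph lies within $\wacc$ of a unitary conjugate of $H'$.

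The main obstacle is the bookkeeping in the inductive step: one must control simultaneously the accumulated backward error and the accumulated drift of $\kappa_V$ and $\gap$, and verify that both remain within their allotted budgets ($\wacc$ for the former, the $\tfrac{1}{2n}$-slacks for the latter) given only $\mach\le\mach_{\shqr}$. This forces one to unwind the definitions of $\wacc$, $\varphi$, $N_{\mathsf{dec}}$, and $\mach_{\shqr}$ and check a short chain of inequalities in $n,k,\K,\Gamma,\scale,\acc$; the point of the design is that $\wacc$ is taken proportional to $\min\{\acc,\Gamma/(8n^2\K^2)\}$ so that the $\kappa_V$- and $\gap$-perturbation bounds of Lemma \ref{lem:gap-kappaV-perturbation} close, while $\mach_{\shqr}$ is in turn small enough (roughly $\wacc/(kN_{\mathsf{dec}}n\muqr(n)\scale)$) that $N_{\mathsf{dec}}$ degree-$k$ $\iqr$ steps accumulate at most $\wacc$ of backward error.
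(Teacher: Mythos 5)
Your proof is correct and follows essentially the same route as the paper: an induction over the iterations of the while loop, with Lemma \ref{lem:iqr-multi-backward-guarantees} bounding the accumulated backward error, Lemma \ref{lem:gap-kappaV-perturbation} controlling the drift of $\kappa_V$ and $\gap$, and the potential-reduction guarantees of $\regularize$ and $\Sh_{k,\K}$ bounding the iteration count by $N_{\mathsf{dec}}$. The one place where your write-up is noticeably looser than the paper's is the claim that all shifts have modulus ``$O(\|H\|)$'' --- the paper tracks this constant explicitly as part of the induction (showing the exceptional-shift disk has radius at most $3\|H''\| \le 4.5\|H'\|$, which is exactly the $C$ appearing inside $\mach_{\Sh}$ and $\mach_{\iqr}$ in the definition of $\mach_{\shqr}$), and this inductive shift-modulus bound is what makes the numerical constants in \eqref{eq:mach-requirement-shqr} close; but conceptually your argument is the same.
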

    
    \begin{proof}[Proof of Lemma]
        Let us write $H''$ for the matrix produced by several runs through lines \ref{line:shqr-ROD}-\ref{line:shqr-decouple-false}, after the while loop has been initialized with $H'$, and assume that all prior calls to $\regularize$ or $\Sh_{k,\K}$ during the loop have satisfied their guarantees, and moreover that all prior shifts have had modulus at most $4.5\|H'\|$ in the complex plane. We will show inductively that this last condition holds throught the while loop.
        
        Because the prior calls to $\regularize$ and $\Sh_{k,\K}$ satisfy their guarantees, each previous run through lines \ref{line:shqr-ROD}-\ref{line:shqr-decouple-false} has either effected immediate decoupling or potential reduction by a multiplicative $1.002(1 - \gamma)$. Since $\wacc \le \pot(H') \le \|H'\| \le \scale$, there can have been at most $N_{\mathsf{dec}}$ runs through lines \ref{line:shqr-ROD}-\ref{line:shqr-decouple-false} so far, each of which we can regard as an IQR step of degree $k$, meaning that we can think of $H''$ as being produced from $H'$ by a \textit{single} IQR step of degree $kN_{\mathsf{dec}}$.\footnote{This is because we have simply defined a higher degree $\iqr$ step as a composition of many degree $1$ $\iqr$ steps.} Thus by Lemma \ref{lem:iqr-multi-backward-guarantees}, our inductive assumption on the prior shifts, and the hypothesis on $\mach$, the distance from $H''$ to a unitary conjugate of $H'$ is at most $4.5\|H\|kN_{\mathsf{dec}}\muqr(n)\mach \le \wacc$. If $H''$ is $\wacc$-decoupled, then the while loop terminates, and the proof is complete. 
        
        Otherwise $H''$ is not $\wacc$-decoupled. By the definition of $\wacc$ and the fact that $\wacc \le \acc/2n \le \scale/2n$, we can apply Lemma \ref{lem:gap-kappaV-perturbation} to find
        \begin{align*}
            \|H''\| &\le \|H'\| + \wacc \le (1 - 1/2n)\scale + \scale/2n \le \scale \\
            \kappa_V(H'') &\le \kappa_V(H') + 6 n^2\frac{\kappa_V^3(H')}{\gap(H')} \wacc \le (1 - 1/2n)\K + \K/2n \le \K \\
            \gap(H'') &\ge \gap(H') - 2\kappa_V(H')\wacc \ge (1 + 1/2n)\Gamma - \Gamma/2n \ge \Gamma,
        \end{align*}
        and we furthermore have $2\wacc^2/\scale \le 2\wacc \le \Gamma \le \gap(H'')$ by the above and the definition of $\wacc$. This means  $\regularize(H'',\wacc,\varphi)$ meets its requirements, and from our assumption on $\mach$ we can apply Lemma \ref{lem:regularize-guarantees} to conclude that it satisfies its guarantees. If this call to $\regularize$ outputs $\decouple = \true$, then the matrix it outputs is indeed decoupled and the while loop terminates. 
        
        If on the other hand $\decouple = \false$, then $\regularize$ outputs $H''$ and $\r$-approximate Ritz values $\calR$ contained in in a disk of radius $1.1\|H''\|$, and $\regularize$ guarantees
        $$
            \dist(\calR,H'') \ge  \frac{\wacc^2\sqrt{\varphi}}{32\cdot 101 \cdot \scale\sqrt{2k}}. 
        $$ 
        The bound on $\kappa_V(H'')$ in the previous paragraph ensures that the requirements of $\Sh_{k,\K}(H,\calR,\wacc,\varphi)$ have been met, and the parameter settings in \eqref{eqn:setk}-\eqref{eqn:settheta} give us
        \begin{align*}
            1.001\r\cp\K^{1/k}\pot(H'')
            &= 1.001\frac{1.01}{0.998^{1/k}}(2\K^4)^{1/2k}(1.01 \K)^{\frac{4\log k}{k}}\K^{1/k}\pot(H'') \\
            &= 1.04 \cdot 2^{1/2k}\K^{\frac{4\log k + 3}{k}} \pot(H'') \\
            &\le 1.04 \cdot\sqrt{2^{\frac{2}{k-1}}\K^{\frac{8\log k + 11}{k-1}}} \|H''\| \\
            &\le 1.04 \sqrt{3} \|H''\| \\
            &\le 1.9 \|H''\|,
        \end{align*} 
        so every exceptional shift has modulus at most $3\|H''\|$ in the complex plane. Our assumption on $\mach$ lets us invoke Lemma \ref{lem:sh-guarantees} to conclude that $\Sh_{k,\K}$ achieves potential reduction by a multiplicative factor of $1.002(1- \gamma)$. Moreover, the shifts executed by $\regularize$ and $\Sh$ in the above run through the while loop had modulus at most
        $$
            3\|H''\| \le 3\|H'\|(1 + 4.5 kN_{\mathsf{dec}}\muqr(n)\mach) \le 3\|H'\|\cdot(1 + \wacc/\scale) \le 4.5\|H'\|,
        $$
        again since $\wacc \le \acc/2n \le \scale$. 
        
        The proof above ensures that for each of its first $N_{\mathsf{dec}}$ iterations, the while loop either produces decoupling or potential reduction by a multiplicative $1.002(1 - \gamma)$, and our earlier discussion implies that it therefore terminates after after at most $N_{\mathsf{dec}}$ iterations. When it does, the proof above additionally tells us that the final matrix $\next{H'}$ is at most $\wacc$-far from a unitary conjugate of $H'$, as desired.
    \end{proof}
    
    We next check that each time the while loop begins in the course of $\shqr$, the hypotheses of Lemma \ref{lem:while-guarantees} are satisfied. This is immediate the first time the loop begins, where the requirements of $\shqr$ give $\|H\| \le \scale/2$, $\kappa_V(H) \le \K/2$, and $\gap(H) \ge 2\Gamma$. If $H'$ is a matrix passed to the while loop, and each of the while loops in its production has satisfied the conclusion of Lemma \ref{lem:while-guarantees}, then $H'$ is the result of at most $n-1$ of decouplings-and-deflations, each of which caused the norm, eigenvector condition number, and gap to deteriorate by at worst an additive $2\wacc$. Thus, finally using the full force of the $1/4n$ factor in the definition of $\wacc$,
    \begin{align*}
        \|H'\| &\le \|H\| + 2(n-1)\wacc \le (1 - 1/2n)\scale \\
        \kappa_V(H') &\le \kappa_V(H) + 6 n^2\frac{\kappa_V^3(H)}{\gap(H)} \cdot 2(n-1)\wacc \le (1 - 1/2n)\K \\
        \gap(H') &\ge \gap(H) - 2\kappa_V(H)\cdot 2(n-1)\wacc \ge (1 + 1/2n)\Gamma
    \end{align*}
    by the definition of $\wacc$. 
    
    This ensures that \textit{every} execution of the while loop throughout $\shqr$ satisfies the conclusion of Lemma \ref{lem:while-guarantees}, which means that the set of `base case' matrices $L_1,L_2,...$ are produced by a tree of alternating decouplings and deflations with depth at most $n-1$, and moreover that
    $$
        \left\| \begin{pmatrix} L_1 & \ast & \ast \\ & L_2 & \ast \\ & & \ddots \end{pmatrix} - \ax Q^\ast H \ax Q \right\| \le 2(n-1)\wacc \le \acc - \acc/n,
    $$
    for some unitary $\ax Q$, as we had set out to show. \\
    
    \noindent\textit{Failure Probability.\,\,} We have already shown that $\regularize$ and $\Sh_{k,\K}$ satisfy their guarantees (including their failure probability) throughout $\shqr$ whenever the hypotheses of Theorem \ref{thm:shqr-guarantees}; these, plus the base calls to $\smalleig$, are the only sources of randomness in the algorithm. There are at most $n^2 \cdot N_{\mathsf{dec}}$ calls each to $\regularize$ and $\Sh_{k,\K}$ over the course of the algorithm, each failing with probability $\varphi$, and at most $n$ calls to $\smalleig$, each failing with probability at most $\phi/3n$. By a union bound and the definition of $\varphi$, the total failure probability is at most $\phi$. \\
    
    \noindent\textit{Arithmetic Operations and Calls to $\smalleig$.\,\,} $\shqr$ recursively runs through line \ref{line:shqr-else} many times in the course of the algorithm; write $T_{\ref{line:shqr-else}}(m,k,\acc,\K,\scale,\Gamma)$ for the arithmetic operations required to execute this line on some matrix of size $m\times m$ during the algorithm, with the convention that this quantity is zero when $m \le k$. Then we have
    \begin{align*}
        T_{\shqr}(n,k,\acc,\K,\scale,\Gamma) 
        &= T_{\ref{line:shqr-else}}(n,k,\acc,\K,\scale,\Gamma) \\
        &\le T_{\regularize}(n,k,\true) \\
        &\qquad + N_{\mathsf{dec}}\Big(T_{\regularize}(n,k,\false) + T_{\Sh}(n,k,\acc,\K,\scale,\Gamma)\Big) \\
        &\qquad + T_{\deflate}(k) + \max_{\sum_i n_i = n}\sum_i T_{\ref{line:shqr-else}}(n_i,k,\acc,\K,\scale,\Gamma).
    \end{align*}
    Since each of the expressions $T_{\square}(\cdot)$ is a polynomial of degree at most two in $n$, the maximum in the third line can be bounded by $T_{\ref{line:shqr-else}}(n-1,k,\acc,\K,\scale,\Gamma)$. Losing only a bit in the constant, we can bound as
    \begin{align*}
        T_{\shqr}(n,k,\acc,\K,\scale,\gamma) &\le n\Big( T_{\regularize}(n,k,\true) \\
        &\qquad + N_{\mathsf{dec}}\Big(T_{\regularize}(n,k,\false) + T_{\Sh}(n,k,\acc,\K,\scale,\Gamma)\Big) \\
        &\qquad + T_{\deflate}(k) \Big) \\
        &= O\left(\left(\log\frac{n\K\scale}{\acc\Gamma}k\log k + k^2\right)n^3\right).
    \end{align*}
    In addition, $\shqr$ requires at most $ O(n\log \tfrac{n\K\scale}{\acc\Gamma})$ calls to $\smalleig$ with accuracy $\Omega(\wacc^2/\scale)$ and failure probability tolerance $\varphi$ in the course of the calls to $\regularize$, plus $O(n)$ `base case' calls with accuracy $\acc/n$ and failure probability tolerance $\phi/3n$; the latter calls to $\smalleig$ are asymptotically dominated by the former. The estimates in the theorem statement come from bounding $\wacc$ and $\varphi$.
\end{proof}

\bibliographystyle{alpha}
\bibliography{ShiftedQR}

\appendix
\section{Deferred proofs from Section \ref{sec:implicitQR}}\label{sec:implicitQRdeferred}
\begin{proof}[Proof of Lemma \ref{lem:iqr-single-guarantees}]
    For the purpose of the analysis, let us define $\ax{H_0} := H - s$ and for each $i = 1,...,n-1$, denote by $\ax{H}_i$ the matrix $\ax{R}$ as it stands at the end of line 2(c) on the the $i$th step of the loop. Additionally, write $G_i$ for the unitary matrix which applies $\giv(X_{1:2,i})$ to the span of $e_i$ and $e_{i+1}$ and is the identity elsewhere. We will show that the unitary $\ax Q := \ax{Q}_{n-1}$ satisfies the guarantees of $\iqr$. We then have
    $$
        \ax{H}_i = G_i^\ast \ax{H}_{i-1} + E_{2,i}, 
    $$
    where $E_{2,i}$ is the structured error matrix which in rows $(i:i+1)$ is equal to
    $$
        \begin{pmatrix} \begin{matrix} E_{2,i,c}\, \\ 0 \end{matrix} \vline & \Large{E_{2,i,b}} \end{pmatrix}
    $$
    and is zero otherwise. From the discussion at the beginning of this appendix, we know that each entry of $E_{2,i,b}$ has size at most $8\|\ax{H}_{i-1}\|\mach$ and similarly that $|E_{2,i,c}| \le 2\|X_{1:2,i}\|\mach \le 8\|\ax{H}_{i-1}\|\mach$. Thus $\|E_{2,i}\| \le 8\sqrt{n}\|\ax{H}_{i-1}\|\mach$, and inductively we have 
    \begin{align*}
        \|\ax{H}_i\| 
        &\le \|\ax{H}_{i-1}\| + \|E_{2,i}\| \\
        &\le \|\ax{H}_{i-1}\|\left(1 + 8\sqrt n\mach\right) \\
        &\le \|\ax{H}_0\|\left(1 + 8\sqrt n\mach\right)^n \\
        &\le \|\ax{H}_0\|\exp\left(8n^{3/2}\mach\right) \\
        &\le 2\|H - s\| \qquad i = 1,...,n-1.
    \end{align*}
    Since $\ax Q$ and every $G_i$ is unitary, this gives
    $$
        \|H - s - \ax{Q}\ax{R}\| = \|\ax{Q}^\ast\ax{H_0} - \ax{R}\| \le \sum_{i \in [n-1]}\|E_{2,i}\| \le 16 n^{3/2} \mach \cdot \|H - s\|.
    $$
    A similar inductive argument applied to line 4 gives that $\|E_{4,i}\| \le 16 \sqrt{n} \mach \cdot \|H - s\|$ for every $i \in [n-1]$, and thus that the $\ax{\next{H}}$ output by $\iqr(H,s)$ satisfies
    \begin{align*}
        \ax{\next{H}} - s
        &= \ax R \ax Q + E_{4,n-1}(G_1\cdots G_{n-2}) + \cdots + E_{4,2}G_1 + E_{4,1} \\
        &= \ax{Q}^\ast(H - s)\ax Q + E_{4,n-1}(G_1\cdots G_{n-2}) + \cdots + E_{4,2}G_1 + E_{4,1} \\
        &\qquad + (G_{n-2}^\ast\cdots G_1^\ast) E_{2,1}\ax{Q} + (G_{n-3}^\ast\cdots G_1^\ast) E_{2,2}\ax{Q} + \cdots + G_1^\ast E_{2,n-1}\ax{Q},
    \end{align*}
    meaning
    $$
        \|\ax{\next{H}} - \ax{Q}^\ast H \ax{Q}\| \le 32 n^{3/2}\mach \cdot \|H - s\|
    $$
    and
    $$
        \|\ax{\next H}\| \le \|H\| + 32 n^{3/2}\|H - s\| \mach,
    $$
    as desired.
    
    In terms of arithmetic operations, it costs $n$ to compute $\ax R$ from $H$ in line 1. In line 2(b), computing $\|X_{1:2,i}\|$ costs $4$, computing $\giv(X_{1:2,i})$ given this norm costs another $2$, zeroing out $\ax{R}_{i+1,i}$ costs $1$, replacing $\ax{R}_{i,i}$ with $\|X_{1:2,i}\|$ costs one, and applying the rotation to $\ax{R}_{i:i+1,i+1:n}$ costs $4(n-i+1)$. We do this for each of $i = 1,2,...n-1$, giving $6(n-1) + 2(n-1) + 2n(n-1)$. In line 4, assuming we have stored each Givens rotation, applying them again requires $2n(n+1)-4$. Finally, in line 5 we pay another $n$ to re-apply the shift. Thus in total we have
    $$
        n + 6(n-1) + 2(n-1) + 2n(n-1) + 2n(n+1) - 4 + n = 4n^2 + 12n - 12 \le 7n^2 \qquad n \ge 2.
    $$
\end{proof}

\begin{proof}[Proof of Lemma \ref{lem:iqr-multi-backward-guarantees}]
    Let $\ax{H}_1 = H$, and for each $\ell \in [m-1]$, let $[\ax{H}_{\ell+1},\ax{R}_\ell] = \iqr(\ax{H}_\ell,r_\ell)$ and $\ax{Q}_\ell$ be as guaranteed by Definition \ref{def:stableiqr}. We have
    $$
        \|\ax{H}_2 - \ax{Q}_1\ast \ax{H_1} \ax{Q}_1\| \le \|\ax{H}_1 - s_1\|\muqr(n)\mach \le (1 + C)\|H\|\muqr(n)\mach,
    $$
    and inductively, assuming that
    $$
        \|\ax{H}_\ell - \ax{Q}_{\ell - 1}^\ast \ax{H}_{\ell - 1}\ax{Q}_{\ell - 1}\| \le (1 + C)\|H\|(\muqr(n)\mach + \cdots + (\muqr(n)\mach)^\ell),
    $$
    we have
    \begin{align*}
        \|\ax{H}_{\ell + 1} - \ax{Q}_\ell^\ast \ax{H}_{\ell}\ax{Q}_\ell\| 
        &\le \|\ax{H}_\ell - s_\ell\|\muqr(n)\mach \\
        &\le \|H\|(1 + (1 + C)(\muqr(n)\mach + \cdots + (\muqr(n)\mach)^\ell) + C)\muqr(n)\mach \\
        &\le (1 + C)\|H\|(\muqr(n)\mach + \cdots + (\muqr(n)\mach)^{\ell+1}).
    \end{align*}
    This gives the first asserted bound, since
    $$
        \|\ax{H} - \ax{Q}^\ast \ax{H}\ax{Q}\| \le \sum_{\ell \in [m-1]} \|\ax{H}_{\ell + 1} - \ax{Q}^\ast_\ell \ax{H}_\ell \ax{Q}_\ell\| \le (1 + C)\|H\| \frac{m\muqr(n)\mach}{1 - \muqr(n)\mach}
    $$
    and $\tfrac{1}{1 - \muqr(n)\mach} \le 4/3 \le 1.4$.
    
    For the second assertion, we will mirror the proof of Lemma \ref{lem:exact-iqr-composition}, using backward stability guarantees on a single $\iqr$ step from Definition \ref{def:stableiqr}. In particular, in view of the definition and the above bound, we can write 
    \begin{align*}
        \ax{H}_\ell - s_\ell &= \ax{Q}_\ell\ax{R}_\ell + E_\ell &  \|E_\ell\| &\le (1 + C)\|H\| \frac{\muqr(n)\mach}{1 - \muqr(n)\mach} \\
        \ax{H}_1\ax{Q}_\ell\cdots\ax{Q}_1 &= \ax{Q}_\ell\cdots\ax{Q}_1\ax{H}_{\ell + 1} + \|\Delta_{\ell+1}\| & \Delta_{\ell + 1} &\le (1 + C)\|H\| \frac{\muqr(n)\mach}{1 - \muqr(n)\mach}
    \end{align*}
    so that
    \begin{align*}
        p(H) = p(\ax{H}_1)
        &= (\ax{H}_1 - s_m)\cdots(\ax{H}_1 - s_1) \\
        &= (\ax{H}_1 - s_m)\cdots(\ax{Q}_1\ax{R}_1 + \ax{Q}_1^\ast E_1) \\
        &= (\ax{H}_1 - s_m) \cdots (\ax{H_1} - s_2)\ax{Q}_1(\ax{R}_1 + \ax{Q}_1^\ast E_1) \\
        &= (\ax{H}_1 - s_m) \cdots \ax{Q}_1(\ax{H}_2 - s_2 + \Delta_2)(\ax{R}_1 + \ax{Q}_1^\ast E_1) \\
        &= (\ax{H}_1 - s_m) \cdots (\ax{H}_1 - s_3)\ax{Q}_1\ax{Q_2}(\ax{R}_2 + \ax{Q}_2^\ast E_2 + \ax{Q}_2^\ast\Delta_2)(\ax{R}_1 + \ax{Q}_1^\ast E_1) \\
        &= \ax{Q}_1 \cdots \ax{Q}_m(\ax{R}_m + \ax{Q}_m^\ast E_m + \ax{Q}_m^\ast\Delta_m)\cdots(\ax{R}_2 + \ax{Q}_2^\ast E_2 + \ax{Q}_2^\ast \Delta_2)(\ax{R}_1 + \ax{Q}_1^\ast E_1)
    \end{align*}
    Thus, using the bounds on $E_\ell$ and $\Delta_\ell$, and the fact that $\|\ax{R}_\ell\| = \|\ax{H}_\ell - s_\ell\| \le \tfrac{(1 + C)\|H\|}{1 - \muqr(n)\mach}$,
    \begin{align*}
        \|p(H) - \ax{Q}_1\cdots\ax{Q}_m\ax{R}_m \cdots \ax{R}_1\| 
        &= \|\ax{R}_m \cdots \ax{R}_1 - (\ax{R}_m + \ax{Q}_m^\ast E_m + \ax{Q}_m^\ast\Delta_m)\cdots(\ax{R}_2 + \ax{Q}_2^\ast E_2 + \ax{Q}_2^\ast \Delta_2)(\ax{R}_1 + \ax{Q}_1^\ast E_1)\| \\
        &\le \prod_{\ell \in [m]}\left(\|\ax{R}_\ell\| + \tfrac{2(1 + C)\|H\|}{1 - \muqr(n)\mach}\right) - \prod_{\ell \in [m]}\|\ax{R}_\ell\| \\
        &\le \left(\tfrac{(1 + C)\|H\|}{1 - \muqr(n)\mach}\right)^m\left((1 + 2\muqr(n)\mach)^m - 1\right) \\
        &\le 4\big(2(1 + C)\|H\|\big)^m\muqr(n)\mach;
    \end{align*}
    in the final line we are using again that $\muqr(n)\mach \le 1/4$ and thus that $((1 + 2\muqr(n)\mach)^m - 1) \le (3/2)^m\muqr(n)\mach/4$, whereas $(1 - \muqr(n)\mach)^{-m} \le (4/3)^m$
\end{proof}

\end{document}